\documentclass[a4paper]{article}
\usepackage[plainpages=false, colorlinks=true, 
            linkcolor=black, urlcolor=black, citecolor=black]{hyperref}
\usepackage{geometry}
\usepackage{tabularx}
\usepackage{tikz}
\usepackage{overpic}
\usepackage{amsmath, amssymb, amsthm,  amsfonts, mathrsfs, cite}
\usepackage{multirow}
\usepackage{color}
\usepackage{booktabs}
\usepackage{rotating}
\usepackage{soul}
\usepackage{bm}
\usepackage{hyperref}
\usepackage{scrextend}
\usepackage{arydshln}
\usepackage {url}
\usepackage[font=scriptsize, labelfont=bf]{caption}
\usepackage[justification=centering]{caption}
\usepackage{graphicx} 
\usepackage{epstopdf} 
\usepackage{algpseudocode}
\usepackage{array,  tabularx}
\usepackage{algorithm, algcompatible}
\usepackage{booktabs} 
\usepackage{paralist} 
\usepackage{verbatim} 
\usepackage{subfig} 
\usepackage{xcolor, marginnote, enumitem}
\usepackage[numbers]{natbib}

\numberwithin{equation}{section}
\theoremstyle{theorem}
\newtheorem{lemma}{Lemma}
\newtheorem{theorem}{Theorem}

\newtheorem{assumption}{Assumption}

\theoremstyle{remark}
\newtheorem{remark}{Remark}

\theoremstyle{definition}
\newtheorem{definition}{Definition}

\DeclareMathOperator{\dist}{dist}

\DeclareMathOperator*{\argmin}{arg\, min}

\newcommand{\lookUp}[1]{}

\newcommand{\bitem}{\begin{itemize}}
	\newcommand{\eitem}{\end{itemize}}

\newcommand{\bpm}{\begin{pmatrix}}

\DeclareMathAlphabet{\mathbfit}{OML}{cmm}{b}{it}

\usepackage{xspace}
\usepackage{bold-extra}
\usepackage[most]{tcolorbox}
\usepackage{subcaption}

\colorlet{texcscolor}{blue!50!black}
\colorlet{texemcolor}{red!70!black}
\colorlet{texpreamble}{red!70!black}
\colorlet{codebackground}{black!25!white!25}

\date{}

\begin{document}

\title{A proximal difference of convex functions algorithm using Barzilai-Borwein step size with nonmonotone line search and extrapolation}

\author{Kelin Wu\thanks{School of Mathematics, 
		Renmin University of China,  China  \  \href{mailto:kelinwu@ruc.edu.cn}{kelinwu@ruc.edu.cn}}
    \and   Hongpeng Sun\thanks{School of Mathematics,
Renmin University of China, China \  \href{mailto:hpsun@amss.ac.cn}{hpsun@amss.ac.cn} }
}

\maketitle

\begin{abstract}
The paper proposes a novel proximal difference-of-convex (DC) algorithmic framework to solve general non-convex, non-smooth optimization problems. By combining Barzilai-Borwein (BB) step sizes with nonmonotone line search strategies, our approach effectively overcomes the conservative step sizes and stability issues inherent in standard proximal DC algorithms. Furthermore, we develop extrapolation mechanisms to accelerate convergence while ensuring global stability. The global convergence of the proposed algorithms is rigorously established under the Kurdyka-\L ojasiewicz property. Numerical experiments on the SCAD-regularized least squares problem and graphic Ginzburg-Landau image segmentation models demonstrate that the proposed methods achieve highly competitive efficiency and accuracy compared to existing DC algorithms.
\end{abstract}

\paragraph{Key words.}{ difference of convex functions, nonconvex, nonsmooth, Barzilai-Borwein method, nonmonotone line search, extrapolation, Kurdyka-\L ojasiewicz property, global convergence}
\paragraph{MSCodes.}{
65K05, 
65K10,  
49J52, 
90C26, 
90C30  
}

\section{Introduction}\label{sec:intro}
We focus on the following nonconvex optimization problem
\begin{equation}\label{eq:basefunctional}
    \min_{x \in X} E(x)= f(x) + g_1(x) - g_2(x),
\end{equation}
where $f(x)$ is an $L$-smooth convex function (i.e., $\nabla f(x)$ is Lipschitz continuous with constant $L$). Both $g_1(x)$ and $g_2(x)$ are proper closed convex functions. $X$ is a finite-dimensional Hilbert space. Nonconvex optimization problems of this form are pervasive in modern computational mathematics, posing significant theoretical challenges \cite{Pangcui}, with critical applications spanning machine learning, signal processing, and imaging \cite{BF, Shensun2023, Lethi2021}. The difference of convex functions algorithm (DCA) is a highly versatile and powerful methodology for tackling this class of problems by leveraging the intrinsic DC structure \cite{LeThi2018, Lethi2024}. 

To ensure global convergence and handle the smooth component $f(x)$, the proximal DCA (pDCA) introduces a quadratic proximal term. However, standard pDCA strictly binds the proximal parameter to the global Lipschitz constant $L$. This worst-case curvature estimation typically results in overly conservative step sizes, significantly degrading the convergence rate. To overcome this fundamental bottleneck, we seek strategies from the Barzilai-Borwein (BB) method \cite{Barzilai88,Birgin2020}. Originating from quasi-Newton methods, the BB step size approximates the inverse Hessian using a scalar matrix. The remarkable efficiency of the BB rules stems from their unique spectral properties, which enable aggressive and effective step sizes \cite{crisci2020spectral}. 

However, the aggressive nature of the BB step size inherently sacrifices the monotonic descent property and may lead to severe numerical instability. To stabilize these fluctuations, researchers have developed explicit stabilization techniques, such as the Stabilized Barzilai-Borwein (SBB) method \cite{burdakov2019stabilized}. Furthermore, to guarantee global convergence while preserving the acceleration, it is mathematically vital to pair the BB step size with a line search. This synergy was established in \cite{birgin2000nonmonotone} using the Spectral Projected Gradient (SPG) method. The theoretical rigor of such nonmonotone scaled projection methods has been deeply consolidated in the recent literature \cite{crisci2022convergence}. Building on these foundations, researchers have successfully extended these nonmonotone BB rules to handle non-smooth composite optimization via proximal gradient schemes \cite{crisci2024barzilai}.

Inspired by these compelling advances, we aim to harness the power of the nonmonotone proximal BB method for DC programming. Yet, extending this nonmonotone framework to non-convex, non-smooth DC problems presents severe theoretical and computational challenges. A critical computational bottleneck in existing nonmonotone enhanced proximal DC algorithms (such as EPDCA \cite{luzhaosong}) is that they perform the backtracking line search directly on the proximal parameter. Consequently, each trial step in the line search requires re-solving the computationally expensive convex subproblem, which severely limits the algorithm's scalability in large-scale applications. To reduce the computational overhead per iteration, we adopt the highly efficient nonmonotone line search strategy recently proposed by \cite{ferreira2024boosted}. However, directly integrating the highly aggressive Barzilai-Borwein method with nonmonotone line search and extrapolation can trigger severe numerical instability. The nonmonotone line search and nonsmoothness also pose a significant theoretical challenge for global convergence.

To bridge this literature gap and overcome the associated technical obstacles, our main contributions are articulated as follows:
\begin{itemize}
    \item \textbf{Decoupled Nonmonotone Line Search:} Building on the efficient line search principles introduced in \cite{ferreira2024boosted}, we propose the Proximal Barzilai-Borwein DC Algorithm (pBBDCA), which strictly decouples the BB curvature estimation from the step-length line search. By shrinking the step size along a fixed search direction, our method avoids the computational limitation of conventional nonmonotone DC algorithms, requiring the expensive proximal subproblem to be solved exactly once per iteration.
    \item \textbf{Extrapolation:} To further improve the algorithmic efficiency, we successfully integrate odd-even alternating BB step sizes \cite{dai2005projected} with Nesterov extrapolation \cite{Lu2019,Wen2018}. We propose two advanced variants: pABBDCA$_{\text{er}}$ (with a restart mechanism) and pABBDCA$_{\text{se}}$ (with an adaptively safeguarded extrapolation truncation). Both variants theoretically guarantee global convergence. Furthermore, as demonstrated in our numerical experiments, the integration of extrapolation effectively suppresses the severe numerical oscillations inherent to standard BB methods, yielding a stable and accelerated descent trajectory.
    \item \textbf{KL Global Convergence Analysis:} We establish a comprehensive convergence framework for the proposed nonmonotone algorithms for the nonsmooth $g_1$. The standard KL analysis is difficult to apply in this case. By delicately managing the subgradient bounds and the nonmonotone descent conditions, we utilize the Kurdyka-Łojasiewicz (KL) property \cite{ABS} to prove the global convergence and derive the local convergence rates. 
\end{itemize}

The remainder of this paper is organized as follows. Section \ref{sec:proposed_method} explicitly details the construction of our proposed algorithm pBBDCA and its extrapolated variants. Section \ref{sec2} establishes the convergence analysis for pBBDCA. Subsequently, Sections \ref{sec:alg2_conv} and \ref{sec:alg3_conv} prove the convergence of pABBDCA$_{\text{er}}$ and pABBDCA$_{\text{se}}$ respectively. Section \ref{sec:pre} demonstrates the high practical efficiency of our algorithms through numerical experiments on the least squares problem with SCAD regularization and graphic Ginzburg-Landau models. Finally, Section \ref{sec:conclusion} provides a conclusion.

\section{The Proposed Algorithmic Framework}\label{sec:proposed_method}
In this section, we formalize the conceptual ideas introduced previously into a rigorous computational framework. We start by developing a normal nonmonotone proximal DC algorithm accelerated by Barzilai-Borwein step sizes, and subsequently extend it to incorporate safeguarded extrapolation mechanisms.

\subsection{Proximal Barzilai-Borwein DC Algorithm}
Recall the general nonconvex optimization problem \eqref{eq:basefunctional} where $E(x) = f(x) + g_1(x) - g_2(x)$. By virtue of the proximal point algorithm, the standard proximal DCA (pDCA) generates the next iterate by solving the following subproblem:
\begin{align*}
      x^{n+1}=\argmin_{x\in X} \left\{\langle\nabla f(x^n)-\xi^n,x\rangle+\frac{L}{2}\|x-x^n\|^2+g_1(x)\right\},
\end{align*}
where $\xi^n\in\partial g_2(x^n)$. However, the standard pDCA strictly restricts the proximal penalty parameter to the global Lipschitz constant $L$, which often leads to relatively conservative step sizes and slow convergence.

To overcome this bottleneck, we seek strategies from the Barzilai-Borwein (BB) method. The rationale for the BB step size $\alpha_n$ stems from quasi-Newton methods, which approximate the inverse Hessian matrix with a scalar matrix $\alpha_n I$. Ideally, this approximation should satisfy
\[
\alpha_n y^{n-1} = s^{n-1},
\]
where $s^{n-1} = x^n - x^{n-1}$ and $y^{n-1} = \nabla f(x^n) - \nabla f(x^{n-1})$. Since a scalar matrix cannot strictly satisfy this multidimensional equation, Barzilai and Borwein proposed solving it in a least-squares sense. Minimizing the residual $\|\alpha^{-1} s^{n-1} - y^{n-1}\|^2$ yields the first BB step size (BB1):
\begin{equation}
\alpha_n^{\text{BB1}} = \frac{\langle s^{n-1}, s^{n-1} \rangle}{\langle s^{n-1}, y^{n-1} \rangle}.
\end{equation}
Alternatively, minimizing the residual $\|s^{n-1} - \alpha y^{n-1}\|^2$ directly with respect to $\alpha$ yields the second variant (BB2):
\begin{equation}
\alpha_n^{\text{BB2}} = \frac{\langle s^{n-1}, y^{n-1} \rangle}{\langle y^{n-1}, y^{n-1} \rangle}.
\end{equation}

By replacing the fixed penalty parameter $L$ with $\alpha_n^{-1}$ in our proximal DC subproblems, these BB step sizes supply aggressive gradient steps. To harness the power of the BB method while ensuring strict global convergence, we must pair it with a nonmonotone line search. While existing approaches perform the line search directly on the proximal parameter, which requires the expensive proximal subproblem to be solved repeatedly whenever a trial step fails \cite{luzhaosong}, our method strictly decouples the curvature estimation from the line search. By shrinking only the step size along a fixed search direction $d^n$, our proximal Barzilai-Borwein DC Algorithm (pBBDCA) solves the proximal subproblem exactly once per iteration. The detailed procedure is presented in Algorithm \ref{alg:prox_bb_dca}.

\begin{algorithm}[H] 
\renewcommand{\algorithmicrequire}{\textbf{Input:}}
\renewcommand{\algorithmicensure}{\textbf{Output:}}
\caption{Proximal Barzilai-Borwein DC Algorithm (\textbf{pBBDCA})}\label{alg:prox_bb_dca} 
\begin{algorithmic}[1] 
\State \textbf{Require: }Initial $x^0$, step size $\alpha_0 > 0$, bounds $0 < \alpha_{\min} < \alpha_{\max}$, parameters $\lambda_0=1$, $\rho, \omega \in (0,1)$, $0 < \eta < 1/\alpha_{\max}$. Set $x^{-1} = x^0$ and $n=0$.
\State \textbf{While} stopping criterion is not satisfied \textbf{do}
    \State Compute $\xi^n \in \partial g_2(x^n)$ and solve 
    \begin{equation}\label{eq:algorithmic:prox}
        u^{n} = \arg\min_x \left\{ \langle \nabla f(x^n) - \xi^n, x - x^n \rangle + \frac{1}{2\alpha_n}\|x - x^n\|^2 + g_1(x) \right\}.
    \end{equation}
    \State Define $d^n = u^n - x^n$ and $\nu_n = \frac{\omega}{n+1} \|d^n\|^2$. \textbf{If} $d^n = 0$, STOP and return $x^n$.
    \State \textbf{Line Search:} Find the smallest integer $k \ge 0$ such that $\lambda_n = \lambda_0 \rho^k$ satisfies:
    \begin{equation}\label{eq:algorithmic:nls}
         E(x^n + \lambda_n d^n) \le E(x^n) - \eta \lambda_n \|d^n\|^2 + \nu_n.
    \end{equation}
    \State Update $x^{n+1} = x^n + \lambda_n d^n$. Compute $s^n = x^{n+1} - x^n$ and $y^n = \nabla f(x^{n+1}) - \nabla f(x^n)$.
    \State \textbf{Barzilai-Borwein Step Size Update:} Set $\alpha_{n+1} = \max\left\{\alpha_{\min}, \min\left\{\alpha_{\max}, \frac{\langle s^n, s^n \rangle}{\langle s^n, y^n \rangle}\right\}\right\}$ if $\langle s^n, y^n \rangle > 0$; else $\alpha_{n+1} = \alpha_{\max}$. Set $n = n+1$.
\State \textbf{End While}
\end{algorithmic}
\end{algorithm}

\subsection{Alternating Barzilai-Borwein Step Sizes and Extrapolation}
To further accelerate the convergence beyond the capability of standard first-order methods, we integrate Nesterov-type extrapolation into our framework. We define the extrapolated point $y^n$ as
\begin{equation*}
y^n = x^n + \beta_n (x^n - x^{n-1}),
\end{equation*}
where $\beta_n \in [0, 1)$ dictates the extrapolation weight. The proximal subproblem is subsequently evaluated at $y^n$ rather than $x^n$.

However, in non-convex and non-smooth environments, the combination of extrapolation and nonmonotone line search requires careful geometric stabilization. Standard BB step sizes might occasionally generate trial directions that deviate significantly from the steepest descent, triggering excessive backtracking when extrapolation is applied. To counteract this, we employ an Alternating Barzilai-Borwein (ABB) strategy \cite{dai2005projected}. By alternating between the longer BB1 steps (even iterations) and more conservative BB2 steps (odd iterations), the ABB rule naturally smooths the curvature estimation, ensuring that the extrapolated directions maintain a high-quality geometric alignment with the true descent trajectory.

While extrapolation accelerates convergence, unconstrained extrapolated steps in a nonmonotone setting can compromise overall algorithmic stability. To harness the acceleration benefits without sacrificing theoretical robustness, we introduce two distinct safeguarded extrapolation mechanisms.

The first approach is a heuristic \textbf{Restart Mechanism} (Algorithm \ref{alg:prox_bb_dca_ext_res}). In this case, if the line search fails to find a suitable step size within a predefined maximum number of inner iterations ($N_{\max}$), it implies that the current extrapolation is counterproductive. The algorithm forcibly resets the extrapolation weight $\beta_n = 0$ and re-evaluates the proximal subproblem from the safe current point $x^n$.

\begin{algorithm}[H]
\renewcommand{\algorithmicrequire}{\textbf{Input:}}
\renewcommand{\algorithmicensure}{\textbf{Output:}}
\caption{Proximal Alternating Barzilai-Borwein Step DC Algorithm with Extrapolation and Restart (\textbf{pABBDCA$_{\text{er}}$})}\label{alg:prox_bb_dca_ext_res}
\begin{algorithmic}[1]
\State \textbf{Require: }Initial $x^0$, $\alpha_0 > 0$, bounds $0 < \alpha_{\min} < \alpha_{\max}$, parameters $\lambda_0=1$, $\rho, \overline{\beta} \in (0,1)$, $0 < \eta < 1/\alpha_{\max}$, max search iterations $N_{\max} \ge 1$. Set $x^{-1} = x^0$, $n=0$.
\State \textbf{While} stopping criterion is not satisfied \textbf{do}
    \State Set $\beta_n = \min\left\{\frac{n-1}{n+2}, \overline{\beta}\right\}$ and $y^n = x^n + \beta_n (x^n - x^{n-1})$. Compute $\xi^n \in \partial g_2(x^n)$ and solve the proximal subproblem:
    \begin{equation}\label{eq:algorithm2:prox}
        u^{n} = \underset{x}{\arg\min} \left\{ \langle - \xi^n, x - y^n \rangle + \frac{1}{2\alpha_n}\|x - y^n\|^2 + f(x)+ g_1(x) \right\}. 
    \end{equation}
    \State Define $d^n = u^n - x^n$ and $\nu_n = \frac{\omega}{n+1} \|d^n\|^2$. \textbf{If} $d^n = 0$, STOP and return $x^n$.
    \State \textbf{Line Search and Restart:} Find the smallest $k \ge 0$ such that $\lambda_n = \lambda_0 \rho^k$ satisfies $$E(x^n + \lambda_n d^n) \le E(x^n) - \eta \lambda_n \|d^n\|^2 + \nu_n.$$ \textbf{If} $k > N_{\max}$ and $\beta_n > 0$, \textbf{Restart} by setting $\beta_n = 0$, $y^n = x^n$, and \textbf{go to step 3}.
    \State Update $x^{n+1} = x^n + \lambda_n d^n$. Compute $s^n = x^{n+1} - y^n$ and $y_{bb}^n = \nabla f(x^{n+1}) - \nabla f(y^n)$.
      \State \textbf{Alternating Barzilai-Borwein Step Size Update: } 
    \begin{equation*}
         \alpha_{n+1} = \begin{cases} 
         \max\left\{\alpha_{\min}, \min\left\{\alpha_{\max}, \frac{\langle s^n, s^n \rangle}{\langle s^n, y^n_{bb} \rangle}\right\}\right\}, & \text{if } \langle s^n, y^n_{bb} \rangle > 0 \text{ and } n \text{ is even}, \\
         \max\left\{\alpha_{\min}, \min\left\{\alpha_{\max}, \frac{\langle s^n, y^n_{bb} \rangle}{\langle y^n_{bb}, y^n_{bb} \rangle}\right\}\right\}, & \text{if } \langle s^n, y^n_{bb} \rangle > 0 \text{ and } n \text{ is odd}, \\
         \alpha_{\max}, & \text{otherwise}.
         \end{cases}
    \end{equation*} 
    Set $n = n+1$.
\State \textbf{End While}
\end{algorithmic}
\end{algorithm}

For the second approach, we propose a \textbf{Safeguarded Extrapolation Mechanism} (Algorithm \ref{alg:prox_bb_dca_ext_ada}). This method adaptively computes a theoretically safe upper bound for the extrapolation parameter $\beta_n$ based on historical step sizes and gradient curvatures. By strictly enforcing this bound, the algorithm guarantees global convergence, thereby preserving both theoretical robustness and computational efficiency.

\begin{algorithm}[H]
\renewcommand{\algorithmicrequire}{\textbf{Input:}}
\renewcommand{\algorithmicensure}{\textbf{Output:}}
\caption{Proximal Alternating Barzilai-Borwein Step DC Algorithm with Safeguarded Extrapolation (\textbf{pABBDCA$_{\text{se}}$})}\label{alg:prox_bb_dca_ext_ada}
\begin{algorithmic}[1]
\State \textbf{Require: }Initial $x^0$, bounds $0 < \alpha_{\min} < \alpha_{\max}$, parameters $\alpha_0 > 0, \lambda_0 =1, \rho, \overline{\beta} \in (0,1)$, $0 < \eta < 1/\alpha_{\max}$, $c_0 > 0$. Set $x^{-1} = x^0$, $\beta_0 = 0$, $\lambda_{-1} = 1$, $\alpha_{-1} = \alpha_0$, $n=0$.
\State \textbf{While} stopping criterion is not satisfied \textbf{do}
    \State \textbf{Safeguarded Extrapolation: }Compute $\hat{\beta}_n = \min\left\{\frac{n-1}{n+2}, \overline{\beta}\right\}$. If $n \ge 1$, set:
    \begin{equation}\label{betan}
        \beta_n = \max\left\{0, \min\left\{\hat{\beta}_n, \frac{\alpha_n}{\alpha_{n-1}\lambda_{n-1}}(2-\beta_{n-1}) - 2c_0\alpha_n\right\}\right\}.
    \end{equation}
    \State Set $y^n = x^n + \beta_n (x^n - x^{n-1})$. Compute $\xi^n \in \partial g_2(x^n)$ and solve the proximal subproblem:
    $$ u^{n} = \underset{x}{\arg\min} \left\{ \langle - \xi^n, x - y^n \rangle + \frac{1}{2\alpha_n}\|x - y^n\|^2 + f(x)+ g_1(x) \right\}. $$
    \State Define $d^n = u^n - x^n$ and $\nu_n = \frac{\omega}{n+1} \|d^n\|^2$. \textbf{If} $d^n = 0$, STOP and return $x^n$.
    \State \textbf{Line Search: }Find the smallest integer $k \ge 0$ such that $\lambda_n = \lambda_0 \rho^k$ satisfies $$E(x^n + \lambda_n d^n) \le E(x^n) - \eta \lambda_n \|d^n\|^2 + \nu_n.$$ Update $x^{n+1} = x^n + \lambda_n d^n$. Compute $s^n = x^{n+1} - y^n$ and $y^n_{bb} = \nabla f(x^{n+1}) - \nabla f(y^n)$.
    \State \textbf{Alternating Barzilai-Borwein Step Size Update: } 
    \begin{equation*}
         \alpha_{n+1} = \begin{cases} 
         \max\left\{\alpha_{\min}, \min\left\{\alpha_{\max}, \frac{\langle s^n, s^n \rangle}{\langle s^n, y^n_{bb} \rangle}\right\}\right\}, & \text{if } \langle s^n, y^n_{bb} \rangle > 0 \text{ and } n \text{ is even}, \\
         \max\left\{\alpha_{\min}, \min\left\{\alpha_{\max}, \frac{\langle s^n, y^n_{bb} \rangle}{\langle y^n_{bb}, y^n_{bb} \rangle}\right\}\right\}, & \text{if } \langle s^n, y^n_{bb} \rangle > 0 \text{ and } n \text{ is odd}, \\
         \alpha_{\max}, & \text{otherwise}.
         \end{cases}
    \end{equation*}
    \quad Set $n = n+1$.
\State \textbf{End While}
\end{algorithmic}
\end{algorithm}

Before proceeding to the convergence analysis, we explicitly outline the regularity assumptions regarding the function components. In practical optimization problems, non-smoothness can arise in different components of the objective depending on the specific model structure (e.g., varying regularizers or indicator functions). To ensure our algorithmic framework covers a broad class of applications, we categorize our analysis into two distinct regularity scenarios, formalized as follows.

\begin{assumption}\label{assum:smooth_g1}
    The function $f$ is continuously differentiable with an $L$-Lipschitz continuous gradient. The component $g_1$ is continuously differentiable with an $L_{g_1}$-Lipschitz continuous gradient, and $g_2$ is a proper, lower semi-continuous convex function.
\end{assumption}

\begin{assumption}\label{assum:smooth_g2}
    The function $f$ is continuously differentiable with an $L$-Lipschitz continuous gradient. The component $g_2$ is continuously differentiable with an $L_{g_2}$-Lipschitz continuous gradient, and $g_1$ is a proper, lower semi-continuous convex function.
\end{assumption}

By accommodating both complementary scenarios, we establish the global convergence of the proposed algorithms in a highly generalized setting.

\section{Convergence analysis of pBBDCA}\label{sec2}
For the convergence analysis, we need the following  Kurdyka-\L ojasiewicz (KL) property and KL exponent. The KL properties guarantee the convergence of iterative sequences. 
\begin{definition}[KL property, KL function {\cite[Definition 2.4]{ABS}} {\cite[Definition 1]{Artacho2018}} and KL exponent {\cite[Remark 6]{Bolte2014}}]\label{def:KL}Let $h: \mathbb{R}^d \rightarrow \mathbb{R}$ be a closed proper function. $h$ is said to satisfy the KL property if for any critical point $\bar x$,  there exists $\nu \in (0,+\infty]$, a neighborhood $\mathcal{O}$ of $\bar x$, and a continuous concave function $\psi: [0,\nu) \rightarrow [0,+\infty)$ with $\psi(0)=0$ such that:
	\begin{itemize}
		\item [{\rm{(i)}}] $\psi$ is continuously differentiable on $(0,\nu)$ with $\psi'>0$ over $(0,\nu)$;
		\item [{\rm{(ii)}}] for any $x \in \mathcal{O}$ with $h(\bar x) < h(x) <h(\bar x) + \nu$, one has
		\begin{equation}\label{eq:kl:def}
		\psi'(h(x)-h(\bar x)) \cdot\text{dist}(\textbf{0},\partial h(x))\geq 1 . 
		\end{equation}
	\end{itemize}
	Furthermore, for a proper closed function $h$ satisfying the KL property,  if $\psi$ in \eqref{eq:kl:def} can be chosen as $\psi(s) = cs^{1-\theta}$ for some $\theta \in [0,1)$ and $c>0$, i.e., there exist  $\bar c, \epsilon >0$ such that
	\begin{equation}\label{eq:KL:exponent:theta:exam}
	\text{dist}(\textbf{0},\partial h(x)) \geq \bar c (h(x)-h(\bar x))^{\theta}
	\end{equation}
 whenever $\|x -\bar x\| \leq \epsilon$ and $h(\bar x) < h(x) <h(\bar x) + \nu$, then we say that $h$ has the KL property at $\bar x$ with exponent $\theta$.
\end{definition}
The KL exponent is determined exclusively by the critical points. If $h$ exhibits the KL property with an exponent $\theta$ at any critical point $\bar x$, then $h$ is a KL function with an exponent $\theta$ at all points in dom $\partial h$ \cite[Lemma 2.1]{Li2018}. It is also assumed that the energy $E(x)$ is bounded from below, and for every scalar $\alpha \in \mathbb{R}$, $\text{lev}_{\leq \alpha}(E): = \{x: E(x) \leq \alpha\}$ is compact.

The following lemma establishes an upper bound on the variation of the objective function along the search direction $d^n = u^n - x^n$, which reveals the necessary condition for achieving a sufficient decrease.

\begin{lemma}\label{lemma:sufficient_descent}
 Let $u^n$ be the optimal solution of the proximal subproblem \eqref{eq:algorithmic:prox} and $d^n = u^n - x^n$ be the search direction. Then, for any step size $\lambda_n \in (0, 1]$, the following descent inequality holds:
\begin{equation}\label{eq:descent_lemma}
    E(x^n + \lambda_n d^n) \le E(x^n) - \lambda_n \left( \frac{1}{\alpha_n} - \frac{L \lambda_n}{2} \right) \|d^n\|^2.
\end{equation}
\end{lemma}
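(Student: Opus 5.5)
We combine three standard ingredients: the descent lemma for the $L$-smooth part $f$, convexity of $g_1$ along the segment $[x^n,u^n]$, and convexity of $g_2$ through the subgradient inequality at $x^n$. The optimality condition of the subproblem \eqref{eq:algorithmic:prox} then turns the resulting linear terms into a negative multiple of $\|d^n\|^2$.

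\textbf{Step 1 (individual bounds).} Write $x_\lambda = x^n+\lambda_n d^n$.

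For $f$, the descent lemma gives
\[
f(x_\lambda)\le f(x^n)+\lambda_n\langle\nabla f(x^n),d^n\rangle+\tfrac{L\lambda_n^2}{2}\|d^n\|^2 .
\]

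For $g_1$, we use $\lambda_n\in(0,1]$ and write $x_\lambda=(1-\lambda_n)x^n+\lambda_n u^n$. Convexity then gives
\[
g_1(x_\lambda)\le g_1(x^n)+\lambda_n\bigl(g_1(u^n)-g_1(x^n)\bigr).
\]

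For $g_2$, convexity with $\xi^n\in\partial g_2(x^n)$ gives
\[
-g_2(x_\lambda)\le -g_2(x^n)-\lambda_n\langle\xi^n,d^n\rangle .
\]

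Summing the three bounds yields
\[
E(x_\lambda)\le E(x^n)+\lambda_n\Bigl[\langle\nabla f(x^n)-\xi^n,d^n\rangle+g_1(u^n)-g_1(x^n)\Bigr]+\tfrac{L\lambda_n^2}{2}\|d^n\|^2 .
\]

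\textbf{Step 2 (bound the bracket).} The subproblem objective $\Phi(x)=\langle\nabla f(x^n)-\xi^n,x-x^n\rangle+\frac{1}{2\alpha_n}\|x-x^n\|^2+g_1(x)$ is $\frac{1}{\alpha_n}$-strongly convex and is minimized at $u^n$. Hence $\Phi(x^n)\ge\Phi(u^n)+\frac{1}{2\alpha_n}\|x^n-u^n\|^2$. Since $\Phi(x^n)=g_1(x^n)$, this reads
\[
\langle\nabla f(x^n)-\xi^n,d^n\rangle+g_1(u^n)-g_1(x^n)\le-\tfrac{1}{\alpha_n}\|d^n\|^2 .
\]
Here the quadratic terms $\frac{1}{2\alpha_n}$ from the value of $\Phi$ and from strong convexity add up to $\frac{1}{\alpha_n}$.

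An equivalent route is the first-order optimality condition $0\in\nabla f(x^n)-\xi^n+\frac{1}{\alpha_n}d^n+\partial g_1(u^n)$. Combined with the subgradient inequality for $g_1$ at $u^n$ evaluated at $x^n$, it gives the same bound.

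\textbf{Step 3 (conclude).} Substituting the bound from Step 2 into the summed inequality gives
\[
E(x_\lambda)\le E(x^n)-\lambda_n\Bigl(\tfrac{1}{\alpha_n}-\tfrac{L\lambda_n}{2}\Bigr)\|d^n\|^2 ,
\]
which is \eqref{eq:descent_lemma}.

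The only delicate point is obtaining the full factor $\frac{1}{\alpha_n}$ rather than $\frac{1}{2\alpha_n}$. This requires strong convexity or the optimality condition, not merely comparing $\Phi(u^n)\le\Phi(x^n)$. The lemma must also be read with $\alpha_n$ the curvature of the subproblem in Algorithm~\ref{alg:prox_bb_dca}, where $f$ is linearized.
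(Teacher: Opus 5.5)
Your proof is correct and follows the paper's argument essentially step for step. Both use strong convexity of the subproblem objective at its minimizer $u^n$, together with its value $g_1(x^n)$ at $x^n$, to get the full factor $\frac{1}{\alpha_n}$, and combine this with the descent lemma for $f$, Jensen's inequality for $g_1$ along $[x^n,u^n]$, and the subgradient inequality for $g_2$; the paper merely derives the strong-convexity bound first, which is immaterial.
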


\begin{proof}
Let $Q_n(x)$ denote the objective function of the subproblem \eqref{eq:algorithmic:prox}, which is given by
\begin{equation}
    Q_n(x) = \langle \nabla f(x^n) - \xi^n, x - x^n \rangle + \frac{1}{2\alpha_n} \|x - x^n\|^2 + g_1(x),
\end{equation}
where $\xi^n \in \partial g_2(x^n)$. Since $Q_n(x)$ is strongly convex with modulus $\frac{1}{\alpha_n}$ and 
   $u^n = \arg\min\limits_{x \in \mathbb{R}^d} Q_n(x)$,
we have
\begin{equation}\label{eq:strong_convex_Q}
    Q_n(x^n) \ge Q_n(u^n) + \frac{1}{2\alpha_n} \|x^n - u^n\|^2.
\end{equation}
Since $Q_n(x^n) = g_1(x^n)$, expanding $Q_n(u^n)$ and substituting it back into \eqref{eq:strong_convex_Q} yields
\begin{equation}
    g_1(x^n) \ge \langle \nabla f(x^n) - \xi^n, u^n - x^n \rangle + \frac{1}{2\alpha_n} \|u^n - x^n\|^2 + g_1(u^n) + \frac{1}{2\alpha_n} \|x^n - u^n\|^2.
\end{equation}
Noting that $d^n = u^n - x^n$, we obtain
\begin{equation}\label{eq:g1_variation}
    g_1(u^n) - g_1(x^n) \le -\langle \nabla f(x^n) - \xi^n, d^n \rangle - \frac{1}{\alpha_n} \|d^n\|^2.
\end{equation}
Since $f$ is $L$-smooth, we get
\begin{equation}\label{eq:f_bound}
    f(x^n + \lambda_n d^n) \le f(x^n) + \lambda_n \langle \nabla f(x^n), d^n \rangle + \frac{L \lambda_n^2}{2} \|d^n\|^2.
\end{equation}
With the convexity of $g_1$, Jensen's inequality provides
\begin{equation}\label{eq:g1_bound}
    \begin{aligned}
    g_1(x^n + \lambda_n d^n) &= g_1((1-\lambda_n)x^n + \lambda_n u^n)\\ &\le (1-\lambda_n)g_1(x^n) + \lambda_n g_1(u^n) = g_1(x^n) + \lambda_n(g_1(u^n) - g_1(x^n)).
\end{aligned}
\end{equation}
For the concave component $-g_2$, since $\xi^n \in \partial g_2(x^n)$, we have
\begin{equation}\label{eq:g2_bound}
g_2(x^n + \lambda_n d^n) \ge g_2(x^n) + \lambda_n \langle \xi^n, d^n \rangle.
\end{equation}
Summing inequalities \eqref{eq:f_bound}, \eqref{eq:g1_bound} and \eqref{eq:g2_bound}, the upper bound for $E(x^n + \lambda_n d^n)$ becomes
\begin{equation}
    E(x^n + \lambda_n d^n) \le E(x^n) + \lambda_n \langle \nabla f(x^n) - \xi^n, d^n \rangle + \frac{L \lambda_n^2}{2} \|d^n\|^2 + \lambda_n(g_1(u^n) - g_1(x^n)).
\end{equation}
Substituting \eqref{eq:g1_variation} into the above inequality, we obtain
\begin{align*}
    E(x^n + \lambda_n d^n) &\le E(x^n) +\lambda_n \langle \nabla f(x^n) - \xi^n, d^n \rangle + \frac{L \lambda_n^2}{2} \|d^n\|^2 +\lambda_n \left( -\langle \nabla f(x^n) - \xi^n, d^n \rangle - \frac{1}{\alpha_n} \|d^n\|^2 \right) \\
    &= E(x^n) - \lambda_n \left( \frac{1}{\alpha_n} - \frac{L \lambda_n}{2} \right) \|d^n\|^2.
\end{align*}
This completes the proof.
\end{proof}

The following lemma demonstrates that the nonmonotone line search is well-defined and guarantees a strictly positive lower bound for the accepted step size.

\begin{lemma}\label{lem:well-definedness}
Assume that $0 < \eta < \frac{1}{\alpha_{\max}}$. Then, the nonmonotone line search in Algorithm \ref{alg:prox_bb_dca} will terminate in a finite number of steps. Moreover, the accepted step size $\lambda_n$ is uniformly bounded away from zero, i.e., there exists a constant $\lambda_{\min} > 0$ such that $\lambda_n \ge \lambda_{\min}$ for all $n \ge 0$.
\end{lemma}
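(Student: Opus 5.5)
The approach is to combine Lemma \ref{lemma:sufficient_descent} with the fact that $\alpha_n\le\alpha_{\max}$, and to discard the nonmonotone slack $\nu_n\ge 0$, which can only help acceptance. If $d^n=0$ the algorithm stops before the line search, so we may assume $d^n\neq 0$. By Lemma \ref{lemma:sufficient_descent}, for every $\lambda\in(0,1]$,
\begin{equation*}
E(x^n+\lambda d^n)\le E(x^n)-\lambda\Big(\frac{1}{\alpha_n}-\frac{L\lambda}{2}\Big)\|d^n\|^2 .
\end{equation*}
Hence the Armijo-type test \eqref{eq:algorithmic:nls} holds as soon as
\begin{equation*}
\frac{1}{\alpha_n}-\frac{L\lambda}{2}\ge \eta .
\end{equation*}
In that case $E(x^n+\lambda d^n)\le E(x^n)-\eta\lambda\|d^n\|^2\le E(x^n)-\eta\lambda\|d^n\|^2+\nu_n$. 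The BB update together with the safeguard guarantees $\alpha_n\in[\alpha_{\min},\alpha_{\max}]$ for $n\ge1$; I would either note this or assume $\alpha_0\le\alpha_{\max}$ so that it holds for all $n$. It follows that $1/\alpha_n\ge 1/\alpha_{\max}>\eta$. Consequently the sufficient condition holds for every
\begin{equation*}
\lambda\le \bar\lambda:=\min\Big\{1,\ \frac{2(1/\alpha_{\max}-\eta)}{L}\Big\}>0 ,
\end{equation*}
where $\bar\lambda$ is independent of $n$.

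For finite termination, note that $\lambda_0\rho^k\to0$ as $k\to\infty$, so some finite $k$ yields $\lambda_0\rho^k\le\bar\lambda$, and the test is passed no later than that index. For the uniform lower bound I would use the standard backtracking argument. If $k=0$ is accepted, then $\lambda_n=1$. Otherwise the trial value $\lambda_n/\rho$ was rejected, so it cannot satisfy $\lambda_n/\rho\le\bar\lambda$. Thus $\lambda_n>\rho\bar\lambda$, and we may take $\lambda_{\min}=\rho\bar\lambda$, because $\rho\bar\lambda\le 1$ covers the first case as well.

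The only delicate point is to ensure that $\alpha_n\le\alpha_{\max}$ holds for all $n$, including the initial $\alpha_0$. This is what makes the gap $1/\alpha_n-\eta$ uniformly positive, and I expect it to be the main thing to check. If $\alpha_0>\alpha_{\max}$ were allowed, I would simply include $n=0$ separately with a constant depending on $\alpha_0$. That affects only one index, so the positive minimum over finitely many exceptional indices and $\rho\bar\lambda$ still serves as $\lambda_{\min}$.
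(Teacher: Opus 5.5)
Your proof is correct and follows the paper's argument essentially verbatim: drop $\nu_n\ge 0$, combine Lemma~\ref{lemma:sufficient_descent} with $\alpha_n\le\alpha_{\max}$ so that every $\lambda\le\bar\lambda$ is accepted, and conclude $\lambda_n\ge\min\{\lambda_0,\rho\bar\lambda\}$; your extra $\min\{1,\cdot\}$ in $\bar\lambda$ changes nothing because $\lambda_0=1$. Your caveat about $\alpha_0\le\alpha_{\max}$ is a point the paper simply takes for granted, and the fallback works even when $1/\alpha_0\le\eta$: at $n=0$ the slack $\nu_0=\omega\|d^0\|^2>0$ still forces acceptance for small $\lambda$, with a threshold depending only on $\alpha_0,\eta,L,\omega$.
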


\begin{proof}
From Lemma \ref{lemma:sufficient_descent}, the objective function satisfies
\begin{equation}\label{eq:proof_descent_recall}
    E(x^n + \lambda_n d^n) \le E(x^n) - \lambda_n\left(\frac{1}{\alpha_n} - \frac{L\lambda_n}{2}\right)\|d^n\|^2.
\end{equation}
According to Algorithm \ref{alg:prox_bb_dca}, the nonmonotone line search condition is given by
\begin{equation}
    E(x^n + \lambda_n d^n) \le E(x^n) - \eta \lambda_n\|d^n\|^2 + \nu_n.
\end{equation}
Since $\nu_n = \frac{\omega}{n+1}\|d^n\|^2 \ge 0$, it is sufficient to find a step size $\lambda_n$ that satisfies
\begin{equation}
    E(x^n) - \lambda_n\left(\frac{1}{\alpha_n} - \frac{L\lambda_n}{2}\right)\|d^n\|^2 \le E(x^n) - \eta \lambda_n \|d^n\|^2.
\end{equation}
Since $\alpha_n$ is uniformly bounded by $\alpha_n \le \alpha_{\max}$, the sufficient condition reduces to
\begin{equation}
    \lambda_n \le \frac{2}{L}\left(\frac{1}{\alpha_{\max}} - \eta\right) := \bar{\lambda}.
\end{equation}
    Since $\eta < \frac{1}{\alpha_{\max}}$, it is guaranteed that $\bar{\lambda} > 0$. Consequently, the backtracking loop will terminate successfully within finite iterations, and the final accepted step size $\lambda_n$ is bounded below by
\begin{equation}
    \lambda_n \ge \lambda_{\min} := \min\left\{\lambda_0, \rho \bar{\lambda}\right\} > 0.
\end{equation}
This confirms that the line search is well-defined and completes the proof.
\end{proof}

\begin{lemma}\label{lem:summability}
Let $\{x^n\}_n$ be the sequence generated by Algorithm \ref{alg:prox_bb_dca}. Then, for sufficiently large $n$, the sequence of energy values $\{E(x^n)\}_n$ is monotonically decreasing. Furthermore, we have
\begin{align}\label{squaresum}
     \sum_{n=0}^{\infty} \|x^{n+1} - x^n\|^2 < \infty,
\end{align}
which immediately implies $\lim\limits_{n \to \infty} \|x^{n+1} - x^n\| = 0$.
\end{lemma}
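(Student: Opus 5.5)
The plan is to combine the line search acceptance condition \eqref{eq:algorithmic:nls} with the uniform lower bound $\lambda_n \ge \lambda_{\min} > 0$ from Lemma \ref{lem:well-definedness}. The aim is to show that, after finitely many iterations, the relaxation term $\nu_n$ is absorbed by the sufficient-decrease term $\eta\lambda_n\|d^n\|^2$. If the algorithm stops at some finite $n$ because $d^n = 0$, the sequence is finite and both claims are trivial, so we assume $d^n \neq 0$ for all $n$.

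\textbf{Step 1 (absorbing $\nu_n$).} Since $x^{n+1} = x^n + \lambda_n d^n$, the accepted step satisfies
\begin{equation*}
E(x^{n+1}) \le E(x^n) - \Big(\eta\lambda_n - \frac{\omega}{n+1}\Big)\|d^n\|^2 \le E(x^n) - \Big(\eta\lambda_{\min} - \frac{\omega}{n+1}\Big)\|d^n\|^2 .
\end{equation*}
Choose an integer $N$ with $N+1 \ge 2\omega/(\eta\lambda_{\min})$. Then for all $n \ge N$ we have $\omega/(n+1) \le \eta\lambda_{\min}/2$, and hence
\begin{equation*}
E(x^{n+1}) \le E(x^n) - \frac{\eta\lambda_{\min}}{2}\|d^n\|^2 .
\end{equation*}
This gives the monotone decrease of $\{E(x^n)\}_n$ for $n \ge N$; it is strict whenever $d^n \ne 0$.

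\textbf{Step 2 (summability).} Summing the inequality of Step 1 from $n = N$ to $M$ telescopes to
\begin{equation*}
\frac{\eta\lambda_{\min}}{2}\sum_{n=N}^{M}\|d^n\|^2 \le E(x^N) - E(x^{M+1}) \le E(x^N) - \inf E .
\end{equation*}
The right-hand side is finite because $E$ is bounded below by the standing assumption and $E(x^N)$ is finite. Letting $M \to \infty$ shows $\sum_{n\ge N}\|d^n\|^2 < \infty$. The finitely many terms with $n < N$ do not affect finiteness. Finally, since $\lambda_n \le \lambda_0 = 1$, we have $\|x^{n+1} - x^n\| = \lambda_n\|d^n\| \le \|d^n\|$, which yields \eqref{squaresum}. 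The limit $\|x^{n+1} - x^n\| \to 0$ then follows because the terms of a convergent series tend to zero.

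\textbf{Main obstacle.} The only delicate point is Step 1. The nonmonotone term $\nu_n$ is proportional to $\|d^n\|^2$ but is not summable by itself, so it must be dominated by the decrease term rather than treated as an error. This works only because Lemma \ref{lem:well-definedness} gives a lower bound $\lambda_{\min}$ that is independent of $n$, while the factor $\omega/(n+1)$ tends to zero.
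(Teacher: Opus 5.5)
Your proposal is correct and follows the paper's proof essentially step for step. Like the paper, you absorb $\nu_n$ into the decrease term using $\lambda_n \ge \lambda_{\min}$ and $\omega/(n+1)\to 0$, telescope from the threshold index using the lower bound on $E$, and pass summability from $d^n$ to $x^{n+1}-x^n$ via $\lambda_n \le \lambda_0 = 1$; your explicit choice of $N$ and your handling of finite termination are minor details the paper leaves implicit.
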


\begin{proof}
By the nonmonotone line search condition in \eqref{eq:algorithmic:nls}, for any accepted step size $\lambda_n$, we have
\begin{equation}\label{eq:nls_proof}
    E(x^{n+1}) \le E(x^n) - \eta \lambda_n \|d^n\|^2 + \nu_n.
\end{equation}
Substituting $\nu_n = \frac{\omega}{n+1}\|d^n\|^2$ into \eqref{eq:nls_proof}, we obtain
\begin{align*}
     E(x^{n+1})
     \le E(x^n) - \left( \eta \lambda_{\min} - \frac{\omega}{n+1} \right) \|d^n\|^2.
\end{align*}
Notice that as $n \to \infty$, the term $\frac{\omega}{n+1} \to 0$. Therefore, there exists a positive integer $N_0$, such that for all $n \ge N_0$, we have
\begin{equation}
    \frac{\omega}{n+1} \le \frac{\eta \lambda_{\min}}{2}:=M
\end{equation}
 and the coefficient of $\|d^n\|^2$ becomes strictly positive, yielding a sufficient descent
\begin{equation}\label{eq:eventual_descent}
    E(x^{n+1}) \le E(x^n) - M \|d^n\|^2.
\end{equation}
This inequality demonstrates that $\{E(x^n)\}_{n \ge N_0}$ is strictly monotonically decreasing. 
Since $E(x)$ is bounded from below, we can sum \eqref{eq:eventual_descent} from $n = N_0$ to some integer $N > N_0$
\begin{equation}
    M \sum_{n=N_0}^{N} \|d^n\|^2 \le E(x^{N_0}) - E(x^{N+1}) \le E(x^{N_0}) - \inf_x E(x) < \infty.
\end{equation}
Letting $N \to \infty$, we conclude that
\begin{equation}\label{eq:d_summable}
    \sum_{n=0}^{\infty} \|d^n\|^2 < \infty.
\end{equation}
Finally, since $x^{n+1} - x^n = \lambda_n d^n$ and $\lambda_n \le \lambda_0$, we have $\|x^{n+1} - x^n\|^2 \le \lambda_0^2 \|d^n\|^2$. Consequently, we get
\begin{equation}
    \sum_{n=0}^{\infty} \|x^{n+1} - x^n\|^2 \le \lambda_0^2 \sum_{n=0}^{\infty} \|d^n\|^2 < \infty.
\end{equation}
This completes the proof.
\end{proof}

\subsection{Global convergence under Assumption \ref{assum:smooth_g1}}
We now establish the global convergence of the sequence $x^n$ generated by Algorithm \ref{alg:prox_bb_dca}.

\begin{theorem}\label{global}
Let $\{x^{n}\}_{n}$ be generated from $pBBDCA$ in Algorithm \ref{alg:prox_bb_dca} for solving \eqref{eq:basefunctional}, assume that $\nabla g_{1}(x)$ is Lipschitz continuous with constant $L_{g_{1}}$ and $E(x)$ has the KL property, then the following properties hold:
\begin{itemize}
    \item[(i)] \textbf{Monotonicity:} The sequence $\{E(x^n)\}_n$ is monotonically decreasing for any $n>N_0$ and there exists a constant $\zeta$, such that $\lim\limits_{n\rightarrow\infty}E(x^{n})=\zeta$.
    \item[(ii)] \textbf{Boundedness:} The iterates $\{x^{n}\}_{n}$ are bounded.
    \item[(iii)] \textbf{Optimality:} Any cluster point of $\{x^{n}\}_{n}$ is the critical point of the problem \eqref{eq:basefunctional}.
    \item[(iv)] \textbf{Convergence:} The sequence $\{x^{n}\}_{n}$ is globally convergent with $\sum_{n=1}^{\infty}\|x^{n}-x^{n-1}\|<\infty$.
\end{itemize}
\end{theorem}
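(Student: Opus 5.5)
Parts (i) and (ii) come almost directly from Lemma \ref{lem:summability}. By \eqref{eq:eventual_descent}, $E(x^{n+1})\le E(x^n)-M\|d^n\|^2$ for $n\ge N_0$, so $\{E(x^n)\}_{n\ge N_0}$ is nonincreasing. Since $E$ is bounded below, it converges to some $\zeta$. Moreover $x^n\in \text{lev}_{\le E(x^{N_0})}(E)$ for all $n\ge N_0$. This level set is compact, and the finitely many earlier iterates do not affect boundedness, which gives (ii).

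For (iii), I will use the optimality condition of \eqref{eq:algorithmic:prox}. Under Assumption \ref{assum:smooth_g1}, $g_1$ is differentiable, so
\[
0=\nabla f(x^n)-\xi^n+\tfrac{1}{\alpha_n}d^n+\nabla g_1(u^n).
\]
Rearranging,
\[
\xi^n=\nabla f(x^n)+\nabla g_1(x^n)+w^n,\qquad w^n:=\tfrac{1}{\alpha_n}d^n+\nabla g_1(u^n)-\nabla g_1(x^n),
\]
and so $\|w^n\|\le (\alpha_{\min}^{-1}+L_{g_1})\|d^n\|$. From \eqref{eq:d_summable} we have $d^n\to0$. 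Take a subsequence $x^{n_j}\to\bar x$. The $\xi^{n_j}$ are bounded because $\partial g_2$ is locally bounded on the compact set, where I assume $g_2$ is finite on a neighborhood of the level set. Passing to a further subsequence, $\xi^{n_j}\to\bar\xi$, and closedness of the graph of $\partial g_2$ gives $\bar\xi\in\partial g_2(\bar x)$. Hence $\nabla f(\bar x)+\nabla g_1(\bar x)\in\partial g_2(\bar x)$, which is criticality. Along the way I also record $E(x^{n_j})\to E(\bar x)$, which holds because $E$ is continuous there. Consequently $E\equiv\zeta$ on the cluster set $\Omega$. That set is nonempty and compact, and since $\|x^{n+1}-x^n\|\to0$ it is also connected.

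For (iv), I will follow the Attouch--Bolte--Svaiter scheme. The key ingredient is a relative-error bound: there is $b>0$ with $\dist(0,\partial E(x^n))\le b\|d^n\|$ for large $n$. This is the main obstacle. With $g_2$ nonsmooth, $\partial E$ is the limiting subdifferential of $f+g_1-g_2$. That subdifferential need not contain $\nabla f+\nabla g_1-\xi^n$, and our $\xi^n$ is taken at $x^n$, not at $x^{n+1}$.

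My first attempt is to work with $\partial_C E(x)=\nabla f(x)+\nabla g_1(x)-\partial g_2(x)$ and invoke KL in that form, as is standard in DCA analyses. Alternatively, I would use an auxiliary function $\Phi(x,\xi)=f(x)+g_1(x)-\langle \xi,x\rangle+g_2^*(\xi)$ evaluated at $(x^n,\xi^n)$. That choice requires assuming KL for $\Phi$ rather than for $E$. I would try $E$ first, with the convention that $\dist(0,\partial E(x^n))\le\|w^n\|\le b\|d^n\|$. Under Assumption \ref{assum:smooth_g1} this bound is exactly what the optimality condition above supplies, because $\xi^n\in\partial g_2(x^n)$ is attached to the same point $x^n$ at which the bound is measured.

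With this bound in hand, the remaining argument is routine. Use the uniformized KL property on $\Omega$. If $E(x^N)=\zeta$ for some $N\ge N_0$, the descent inequality forces $d^n=0$ afterwards, so the sequence is eventually constant. Otherwise, concavity of $\psi$ gives
\[
\psi(E(x^n)-\zeta)-\psi(E(x^{n+1})-\zeta)\ge \psi'(E(x^n)-\zeta)\bigl(E(x^n)-E(x^{n+1})\bigr)\ge \frac{M\|d^n\|^2}{b\|d^n\|}.
\]
This shows $\|d^n\|\le \frac{b}{M}\Delta_n$, where $\Delta_n$ is the difference of $\psi$-values. Summing the telescoping series gives $\sum\|d^n\|<\infty$. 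Since $\|x^{n+1}-x^n\|\le\lambda_0\|d^n\|$, the iterates satisfy $\sum\|x^{n}-x^{n-1}\|<\infty$. The sequence is therefore Cauchy and converges to a critical point by (iii).
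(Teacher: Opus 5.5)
Your proposal is correct and follows the paper's proof essentially step for step. Parts (i)--(ii) use the eventual descent inequality plus level-boundedness. Part (iii) uses the optimality condition of \eqref{eq:algorithmic:prox} together with closedness of $\partial g_2$. Part (iv) feeds the bound $\dist(\textbf{0},\partial E(x^n))\le (L_{g_1}+1/\alpha_{\min})\|d^n\|$, with $\partial E=\nabla f+\nabla g_1-\partial g_2$, into the same concavity/KL telescoping argument. Your uniformized KL on $\Omega$ and your treatment of the case $E(x^N)=\zeta$ supply details the paper leaves implicit. One simplification: the local-boundedness step for $\xi^{n_j}$ in (iii) is unnecessary, because $\xi^{n}=\nabla f(x^{n})+\nabla g_1(u^{n})+d^{n}/\alpha_{n}$ converges along the subsequence directly.
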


\begin{proof}
    (i) From \eqref{eq:eventual_descent}, we have that for $n > N_0$, $E(x^{n+1}) \le E(x^n) - \frac{\eta \lambda_{\min}}{2} \|d^n\|^2$.
This indicates that $\{E(x^n)\}_{n>N_0}$ is monotonically decreasing. With the lower boundedness of $E(x)$, there exists a constant $\zeta$, such that $\lim_{n\to\infty} E(x^n)=\zeta$.

(ii). The monotonic decrease of $\{E(x^n)\}_{n>N_0}$ implies $E(x^n) \le E(x^{N_0}), \quad \forall n>N_0$. This indicates that $\{x^n\}_{n>N_0}$ is entirely contained in the lower level set $\text{lev}_{\leq E(x^{N_0})}(E)$. Since $E(x)$ is assumed to be level-bounded, this set is strictly bounded. Consequently, with the finiteness of the initial $N_0$ terms, the entire sequence $\{x^n\}_n$ is bounded.

(iii) Since we have proved the boundedness of $\{x^n\}_n$, there exists at least one cluster point $x^*$ with a convergent subsequence $\{x^{n_i}\}_i$ satisfying 
\begin{align}\label{ni}
    \lim_{i\to\infty} x^{n_i}=x^* \quad \text{and}\quad
    \lim_{i\to\infty} \|x^{n_i+1}-x^{n_i}\|=0.
    \end{align}
From the first-order optimality condition of the proximal update \eqref{eq:algorithmic:prox}, we get
\begin{align*}
    \nabla f(x^{n_i})+\frac{1}{\alpha_{n_i}}(u^{n_i}-x^{n_i})+\nabla g_1(u^{n_i}) \in \partial g_2(x^{n_i})
\end{align*}
Thus we have
\begin{align*}
   \frac{1}{\alpha_{n_i}}(x^{n_i}-u^{n_i}) +[\nabla g_1(x^{n_i})-\nabla g_1(u^{n_i})]\in  \nabla f(x^{n_i})+\nabla g_1(x^{n_i})-\partial g_2(x^{n_i}).
\end{align*}
Since $\nabla g_1(x)$ is Lipschitz continuous and $d^n=u^n-x^n\to0 (n\to\infty)$ by \eqref{squaresum}, we get
\begin{align*}
   \dist(0, \nabla f(x^{n_i})+\nabla g_1(x^{n_i})-\partial g_2(x^{n_i}))\rightarrow 0.
\end{align*}
With \eqref{ni} and the closedness of $\partial g_2$, we finally get 
\begin{align*}
    0\in  \nabla f(x^*)+\nabla g_1(x^*)-\partial g_2(x^*),
\end{align*}
which means that $x^*$ satisfies the first-order optimal condition.

(iv). The subgradient of $E(x)$ is given by
\begin{align*}
    \partial E(x)=\nabla f(x)+\nabla g_1(x) -\partial g_2(x)
\end{align*}
From the optimality condition
    $\nabla f(x^n)-\xi^n +\frac{1}{\alpha_n}(u^n-x^n) +\nabla g_1(u^n)=0$,
we obtain
\begin{align*}
  \dist(\textbf{0},\partial E(x^n))\le&  \|\nabla g_1(x^n) -\nabla g_1(u^n)\|+\left\|\frac{1}{\alpha_n}(u^n-x^n) \right\|\\
    \le& \left(L_{g_1}+\frac{1}{\alpha_n}\right)\|u^n-x^n\|
    \le C\|u^n-x^n\|
\end{align*}
where $C=L_{g_1}+\frac{1}{\alpha_{\min}}$ is a positive constant. With $\lim\limits_{n \to \infty}\|u^n-x^n\|=0$ by Lemma \ref{lem:summability}, we get
\begin{align}
    \lim_{n\to\infty} \text{dist}(\textbf{0},\partial E(x^n))=0.
\end{align}
Now assuming that $\psi$ is a concave function, we obtain
\begin{align*}
    &[\psi(E(x^n)-\zeta)-\psi(E(x^{n+1})-\zeta)]\cdot\dist(\textbf{0},\partial E(x^n))\\
    \ge & \psi '(E(x^n)-\zeta)[E(x^n)-E(x^{n+1})]\cdot\dist(\textbf{0},\partial E(x^n))\\
    = & \underbrace{\psi ' (E(x^n)-\zeta)\cdot\dist(\textbf{0},\partial E(x^n))}_{\geq 1}\cdot[E(x^n)-E(x^{n+1})]\\
     \ge & E(x^n)-E(x^{n+1})\\
     \ge & M\|u^n-x^n\|^2
\end{align*}
where the second inequality holds with the KL-property of $E(x)$.
Thus we arrive at
\begin{align}
    \|u^n-x^n\|\le K [\psi(E(x^n)-\zeta)-\psi(E(x^{n+1})-\zeta)]
\end{align}
where $K=\frac{C}{M}$. Summing the inequality from $N_0$ to $\infty$, we have
\begin{align}
    \sum_{n=N_0}^{\infty}\|u^n-x^n\| \le K \psi(E(x^{N_0})-\zeta)<+\infty.
\end{align}
Finally, we get
\begin{align}
    \sum_{n=0}^{\infty}\|x^{n+1}-x^n\|\le \sum_{n=0}^{N_0}\lambda_0\|u^n-x^n\|+\sum_{n=N_0}^{\infty}\lambda_0\|u^n-x^n\| <+\infty.
\end{align}
\end{proof}

\subsection{Global convergence under Assumption \ref{assum:smooth_g2}}

In many practical applications, the convex component $g_1$ is non-smooth, while the concave component $-g_2$ is continuously differentiable. In this section, we establish the global convergence of Algorithm \ref{alg:prox_bb_dca} under the Assumption \ref{assum:smooth_g2}. For all the proposed algorithms, the minimization problems are only for solving $u^n$ along with the line search direction $d^n=u^n-x^n$ instead of $x^n$. This together with the nonsmoothness of $g_1$ and the jump from $x^n$ to $x^{n+1}$ by the nonmonotone line search, causes a highly challenging problem for the global convergence of  $\{x^n\}_n$. Fortunately, we propose a novel method that is far from the standard KL analysis. Let us first establish a relative subgradient bound.

\begin{lemma}\label{lem:subgrad_bound_smooth_g2}
    Suppose that $\nabla g_2$ is Lipschitz continuous with constant $L_{g_2}$. Let $\{x^n\}_n$ and $\{u^n\}_n$ be the sequences generated by Algorithm \ref{alg:prox_bb_dca}. Then there exists a constant $C > 0$ such that
    \begin{equation}
     \dist(\emph{\textbf{0}}, \partial E(u^n)) \le C \|u^n - x^n\|.
    \end{equation}
\end{lemma}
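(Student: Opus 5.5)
Under Assumption \ref{assum:smooth_g2}, $g_2$ is differentiable, so $\xi^n=\nabla g_2(x^n)$ and the limiting subdifferential of $E$ at any point of $\operatorname{dom} g_1$ is
\begin{equation*}
\partial E(x)=\nabla f(x)+\partial g_1(x)-\nabla g_2(x).
\end{equation*}
This holds because $f-g_2$ is $C^1$ and $g_1$ is convex, so the subdifferential sum rule with a smooth term applies. The idea is to evaluate the subdifferential at $u^n$ rather than at $x^n$. The subproblem \eqref{eq:algorithmic:prox} gives an exact subgradient of $g_1$ at $u^n$, so no smoothness of $g_1$ is needed.

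\textbf{Step 1 (optimality condition).} The objective of \eqref{eq:algorithmic:prox} is a smooth linear-plus-quadratic term plus the convex function $g_1$. Fermat's rule therefore gives
\begin{equation*}
0\in \nabla f(x^n)-\nabla g_2(x^n)+\frac{1}{\alpha_n}(u^n-x^n)+\partial g_1(u^n).
\end{equation*}
Hence $\zeta^n:=-\nabla f(x^n)+\nabla g_2(x^n)-\frac{1}{\alpha_n}(u^n-x^n)\in\partial g_1(u^n)$. In particular $u^n\in\operatorname{dom}\partial g_1$, so $\partial E(u^n)$ is nonempty.

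\textbf{Step 2 (build an element of $\partial E(u^n)$).} Set
\begin{equation*}
w^n:=\nabla f(u^n)+\zeta^n-\nabla g_2(u^n)\in\partial E(u^n).
\end{equation*}
Substituting $\zeta^n$ gives
\begin{equation*}
w^n=[\nabla f(u^n)-\nabla f(x^n)]-[\nabla g_2(u^n)-\nabla g_2(x^n)]-\frac{1}{\alpha_n}(u^n-x^n).
\end{equation*}

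\textbf{Step 3 (estimate).} Use the $L$-Lipschitz continuity of $\nabla f$, the $L_{g_2}$-Lipschitz continuity of $\nabla g_2$, and the safeguard $\alpha_n\ge\alpha_{\min}$ from the BB update in Algorithm \ref{alg:prox_bb_dca}. This yields
\begin{equation*}
\dist(\mathbf{0},\partial E(u^n))\le\|w^n\|\le\Big(L+L_{g_2}+\frac{1}{\alpha_{\min}}\Big)\|u^n-x^n\|.
\end{equation*}
So the constant is $C=L+L_{g_2}+1/\alpha_{\min}$. It is independent of $n$, which is what the later KL argument on $\{u^n\}$ needs.

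\textbf{Main obstacle.} The only delicate point is justifying the subdifferential calculus in Step 2: that $\partial E(u^n)$ equals $\nabla(f-g_2)(u^n)+\partial g_1(u^n)$, with $\partial g_1$ the convex subdifferential. I would cite the standard sum rule for a proper lsc function plus a $C^1$ function, under which limiting and Fréchet subdifferentials coincide with the convex one for $g_1$.

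We must also ensure $\alpha_n\ge\alpha_{\min}$ for every $n$, including $n=0$. This holds because $\alpha_0$ can be taken in $[\alpha_{\min},\alpha_{\max}]$, or else $C$ is enlarged by the single value $1/\alpha_0$.

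Everything else is the direct triangle inequality above.
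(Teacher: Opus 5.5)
Your proof is correct and follows essentially the same route as the paper: you extract the subgradient of $g_1$ at $u^n$ from the optimality condition of \eqref{eq:algorithmic:prox}, form the same element $w^n\in\partial E(u^n)$, and obtain the same constant $C=L+L_{g_2}+1/\alpha_{\min}$. Your remarks on the subdifferential sum rule, and on $\alpha_0$ possibly lying outside $[\alpha_{\min},\alpha_{\max}]$, are points the paper leaves implicit, and you handle both correctly.
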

\begin{proof}
    From the optimality condition of the proximal subproblem \eqref{eq:algorithmic:prox}, we have
    \begin{equation*}
        \eta^n := -\nabla f(x^n) + \nabla g_2(x^n) - \frac{1}{\alpha_n}(u^n - x^n) \in \partial g_1(u^n).
    \end{equation*}
    Since $\partial E(u^n) = \nabla f(u^n) + \partial g_1(u^n) - \nabla g_2(u^n)$,
    we construct a specific subgradient $w^n \in \partial E(u^n)$ by incorporating $\eta^n \in \partial g_1(u^n)$:
    \begin{align*}
        w^n &= \nabla f(u^n) + \eta^n - \nabla g_2(u^n) \nonumber \\
            &= \big( \nabla f(u^n) - \nabla f(x^n) \big) + \big( \nabla g_2(x^n) - \nabla g_2(u^n) \big) - \frac{1}{\alpha_n}(u^n - x^n).
    \end{align*}
    Taking the norm and applying the triangle inequality, together with the $L$-smoothness of $f$, the $L_{g_2}$-smoothness of $g_2$, and the step size bound $\alpha_n \ge \alpha_{\min}$, we obtain
    \begin{align*}
        \|w^n\| &\le \|\nabla f(u^n) - \nabla f(x^n)\| + \|\nabla g_2(x^n) - \nabla g_2(u^n)\| + \frac{1}{\alpha_n}\|u^n - x^n\| \nonumber \\
                &\le \left( L + L_{g_2} + \frac{1}{\alpha_{\min}} \right) \|u^n - x^n\|.
    \end{align*}
    Setting the constant $C = L + L_{g_2} + \frac{1}{\alpha_{\min}} > 0$ yields 
    \begin{equation}
     \dist(\textbf{0}, \partial E(u^n)) \le \|w^n\| \le C \|u^n - x^n\|.
    \end{equation}
\end{proof}
To bridge the subgradient bound at $u^n$ with the descent condition at $x^{n+1}$, we establish the following energy relation by exploiting the convexity of $f+g_1$ and the smoothness of $g_2$.

\begin{lemma}\label{lem:energy_relation_smooth_g2}
    Under Assumption \ref{assum:smooth_g2}, let $\{x^n\}_n$ and $\{u^n\}_n$ be the sequences generated by Algorithm \ref{alg:prox_bb_dca}. For any $n \ge 0$, the following energy relation holds
    \begin{equation}\label{eq:energy_convex_combo}
        E(x^{n+1}) \le (1-\lambda_n)E(x^n) + \lambda_n E(u^n) + \frac{L_{g_2}}{2} \lambda_n(1-\lambda_n) \|u^n - x^n\|^2.
    \end{equation}
\end{lemma}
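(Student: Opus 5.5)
We split $E = (f+g_1) - g_2$ and treat the two parts separately. Write $x^{n+1} = (1-\lambda_n)x^n + \lambda_n u^n$, which is a convex combination because $\lambda_n \in (0,1]$ (recall $\lambda_0 = 1$ and $\rho\in(0,1)$).

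\textbf{Step 1 (convex part).} By convexity of $h := f + g_1$ (Jensen's inequality, as in \eqref{eq:g1_bound}),
\[
h(x^{n+1}) \le (1-\lambda_n) h(x^n) + \lambda_n h(u^n).
\]

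\textbf{Step 2 (concave part).} We need a lower bound on $g_2(x^{n+1})$ by the same convex combination, up to a curvature error. Since $g_2$ is convex with $L_{g_2}$-Lipschitz gradient, the standard interpolation inequality for $L$-smooth functions applies. For $z_\lambda = (1-\lambda)a + \lambda b$,
\[
(1-\lambda) g_2(a) + \lambda g_2(b) - g_2(z_\lambda) \le \frac{L_{g_2}}{2}\lambda(1-\lambda)\|a-b\|^2 .
\]
To verify this, apply the descent lemma at $z_\lambda$ twice:
\[
g_2(a) \le g_2(z_\lambda) + \langle \nabla g_2(z_\lambda), a - z_\lambda\rangle + \tfrac{L_{g_2}}{2}\|a-z_\lambda\|^2,
\]
and the analogous bound for $b$. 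Then weight the two bounds by $(1-\lambda)$ and $\lambda$. The linear terms cancel because $(1-\lambda)(a-z_\lambda) + \lambda(b - z_\lambda) = 0$. The quadratic terms combine to $\tfrac{L_{g_2}}{2}\bigl[(1-\lambda)\lambda^2 + \lambda(1-\lambda)^2\bigr]\|a-b\|^2 = \tfrac{L_{g_2}}{2}\lambda(1-\lambda)\|a-b\|^2$, using $a - z_\lambda = \lambda(a-b)$ and $b - z_\lambda = (1-\lambda)(b-a)$. Rearranged, this gives
\[
-g_2(x^{n+1}) \le -(1-\lambda_n)g_2(x^n) - \lambda_n g_2(u^n) + \frac{L_{g_2}}{2}\lambda_n(1-\lambda_n)\|u^n-x^n\|^2 .
\]

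\textbf{Step 3 (combine).} Adding the inequalities from Steps 1 and 2 yields \eqref{eq:energy_convex_combo} directly.

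The only nonroutine point is Step 2, namely getting the sharp factor $\lambda_n(1-\lambda_n)$ rather than a cruder constant. Expanding around the intermediate point $z_\lambda$, rather than around $x^n$, is what makes the linear terms cancel exactly. No property of the line search or of the subproblem optimality is needed here: the inequality is purely geometric and holds for any $\lambda_n\in[0,1]$.
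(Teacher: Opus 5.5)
Your proposal is correct and follows essentially the same route as the paper: Jensen's inequality for the convex part $f+g_1$, then the descent lemma for $g_2$ expanded at the intermediate point $x^{n+1}$. The $(1-\lambda_n)$ and $\lambda_n$ weighting makes the linear terms cancel and leaves the quadratic coefficient $\lambda_n(1-\lambda_n)$.
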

\begin{proof}
    Let $F(x) = f(x) + g_1(x)$. Since both $f$ and $g_1$ are convex, $F(x)$ is convex. Recall the line search update $x^{n+1} = x^n + \lambda_n(u^n - x^n) = (1-\lambda_n)x^n + \lambda_n u^n$. The convexity of $F$ implies
\begin{equation}\label{eq:F_convex_combo}
        F(x^{n+1}) \le (1-\lambda_n)F(x^n) + \lambda_n F(u^n).
    \end{equation}
    Since $g_2$ is $L_{g_2}$-smooth, evaluating $g_2(x^n)$ and $g_2(u^n)$ expanded at $x^{n+1}$ yields:
    \begin{align}
        g_2(x^n) &\le g_2(x^{n+1}) + \langle \nabla g_2(x^{n+1}), x^n - x^{n+1} \rangle + \frac{L_{g_2}}{2}\|x^n - x^{n+1}\|^2, \label{eq:g2_xn} \\
        g_2(u^n) &\le g_2(x^{n+1}) + \langle \nabla g_2(x^{n+1}), u^n - x^{n+1} \rangle + \frac{L_{g_2}}{2}\|u^n - x^{n+1}\|^2. \label{eq:g2_un}
    \end{align}
    Multiplying \eqref{eq:g2_xn} by $(1-\lambda_n)$ and \eqref{eq:g2_un} by $\lambda_n$, and summing them up, the linear terms elegantly cancel out since $(1-\lambda_n)(x^n - x^{n+1}) + \lambda_n(u^n - x^{n+1}) = (1-\lambda_n)x^n + \lambda_n u^n - x^{n+1} = 0$. This yields:
    \begin{equation*}
        (1-\lambda_n)g_2(x^n) + \lambda_n g_2(u^n) \le g_2(x^{n+1}) + \frac{L_{g_2}}{2} \left[ (1-\lambda_n)\|x^n - x^{n+1}\|^2 + \lambda_n\|u^n - x^{n+1}\|^2 \right].
    \end{equation*}
    With $x^n - x^{n+1} = -\lambda_n(u^n - x^n)$ and $u^n - x^{n+1} = (1-\lambda_n)(u^n - x^n)$, we obtain
\begin{equation}\label{eq:g2_concave_combo}
        -g_2(x^{n+1}) \le (1-\lambda_n)(-g_2(x^n)) + \lambda_n(-g_2(u^n)) + \frac{L_{g_2}}{2} \lambda_n(1-\lambda_n) \|u^n - x^n\|^2.
    \end{equation}
    Adding \eqref{eq:F_convex_combo} and \eqref{eq:g2_concave_combo} directly yields \eqref{eq:energy_convex_combo}, completing the proof.
\end{proof}

Equipped with the subgradient bound and the energy relation, we are now ready to establish the global convergence of the sequence via the KL property.

\begin{theorem}\label{thm:global_smooth_g2}
    Let $\{x^n\}_n$ be the bounded sequence generated by Algorithm \ref{alg:prox_bb_dca}. Suppose $E(x)$ is a KL function with exponent $\theta \in [0, 1)$. Then, the sequence $\{x^n\}_n$ globally converges to a critical point $x^*$ with a finite length, i.e.,
    \begin{equation*}
        \sum_{n=0}^{\infty} \|x^{n+1} - x^n\| < \infty.
    \end{equation*}
\end{theorem}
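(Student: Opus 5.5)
We apply the KL property at the auxiliary points $u^n$, where Lemma \ref{lem:subgrad_bound_smooth_g2} gives a relative subgradient bound, and transfer the information back to $x^n$ through Lemma \ref{lem:energy_relation_smooth_g2}.

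\textbf{Step 1: basic limits.} By Lemma \ref{lem:summability}, $\sum_n\|d^n\|^2<\infty$, so $\|u^n-x^n\|\to 0$, and for $n\ge N_0$
\[
E(x^{n+1})\le E(x^n)-M\|d^n\|^2 ,
\]
so $E(x^n)\downarrow\zeta$. Boundedness of $\{x^n\}_n$ follows from level-boundedness as in Theorem \ref{global}(ii), hence $\{u^n\}_n$ is bounded too. Lemma \ref{lem:subgrad_bound_smooth_g2} gives $\dist(\mathbf 0,\partial E(u^n))\le C\|d^n\|\to0$.

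\textbf{Step 2: $E(u^n)\to\zeta$.} From Lemma \ref{lem:energy_relation_smooth_g2} and $\lambda_n\ge\lambda_{\min}$,
\[
E(u^n)\ge \zeta-o(1).
\]
For the upper bound, take the minimality of $u^n$ in \eqref{eq:algorithmic:prox} against $x^n$ and use the descent lemma for $f$ and convexity of $g_2$. This is exactly the computation of Lemma \ref{lemma:sufficient_descent} with $\lambda=1$, and it gives
\[
E(u^n)\le E(x^n)-\Bigl(\tfrac1{\alpha_n}-\tfrac L2\Bigr)\|d^n\|^2\le E(x^n)+\tfrac L2\|d^n\|^2 .
\]
Hence $E(u^n)\to\zeta$. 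Let $\Omega$ be the set of cluster points of $\{x^n\}_n$; it coincides with that of $\{u^n\}_n$. By closedness of $\partial g_1$ and continuity of $\nabla f,\nabla g_2$, every point of $\Omega$ is critical. We also need $E\equiv\zeta$ on $\Omega$. Lower semicontinuity gives $E(x^*)\le\zeta$. For the reverse inequality, pass to the limit in the subproblem inequality $Q_n(u^n)\le Q_n(x^*)$, which gives $\limsup g_1(u^{n_i})\le g_1(x^*)$, hence $E(x^*)=\zeta$. With $\Omega$ compact, the uniformized KL property then gives a single $\psi$ valid near all of $\Omega$.

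\textbf{Step 3: key one-step inequality (main obstacle).} The difficulty is that KL controls $\psi'(E(u^n)-\zeta)$, while the decrease is in $E(x^n)$. I would introduce the merit values
\[
e_n:=E(x^n)-\zeta+\tau_n,\qquad \tau_n\ \text{a tail sum of } \nu_k,
\]
and, more simply, compare them with $E(u^n)-\zeta$. By Step 2,
\[
E(u^n)-\zeta\le E(x^n)-\zeta+\tfrac L2\|d^n\|^2 .
\]
Since $\psi'$ is nonincreasing, this yields
\[
\psi'\bigl(E(x^n)-\zeta+\tfrac L2\|d^n\|^2\bigr)\ \ge\ \frac{1}{C\|d^n\|}
\]
whenever $E(u^n)>\zeta$; if $E(u^n)\le\zeta$ the inequality needs separate handling. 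Here I would use the exponent form: with $\psi(s)=cs^{1-\theta}$, KL gives
\[
E(u^n)-\zeta\le c'\|d^n\|^{1/\theta},
\]
and then Lemma \ref{lem:energy_relation_smooth_g2} yields
\[
E(x^{n+1})-\zeta\le(1-\lambda_n)\bigl(E(x^n)-\zeta\bigr)+\lambda_n c'\|d^n\|^{1/\theta}+\tfrac{L_{g_2}}{2}\|d^n\|^2 .
\]
Combined with the sufficient decrease, this gives a recursion of the type
\[
r_{n+1}\le r_n-M\|d^n\|^2,\qquad r_n\lesssim \|d^n\|^{\min(2,1/\theta)}+\|d^{n-1}\|^{\min(2,1/\theta)} ,
\]
with $r_n=E(x^n)-\zeta$.

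\textbf{Step 4: finite length.} From the recursion I would run the standard Attouch--Bolte argument with $\phi(s)=s^{1-\theta}$, using concavity:
\[
\phi(r_n)-\phi(r_{n+1})\ge(1-\theta)r_n^{-\theta}M\|d^n\|^2\gtrsim \frac{\|d^n\|^2}{\|d^n\|+\|d^{n-1}\|} .
\]
Then $2\|d^n\|\le\|d^{n-1}\|+\text{const}\,[\phi(r_n)-\phi(r_{n+1})]$, and telescoping gives $\sum\|d^n\|<\infty$; the cases $\theta\le 1/2$, where the $\|d\|^2$ term dominates, are treated the same way. Since $\|x^{n+1}-x^n\|\le\lambda_0\|d^n\|$, the sequence is Cauchy, so it converges to some $x^*\in\Omega$, which is critical by Step 2. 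The delicate point I expect is Step 3, namely transferring the KL inequality from $u^n$ to $x^n$ without monotonicity of $E(u^n)$.
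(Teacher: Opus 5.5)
\paragraph{The gap is in Step 3.} This is the step the whole proof depends on, and it has two problems.

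\emph{The first inference runs the wrong way.} Since $\psi'$ is nonincreasing, the upper bound $E(u^n)-\zeta\le E(x^n)-\zeta+\frac L2\|d^n\|^2$ only gives $\psi'\bigl(E(x^n)-\zeta+\frac L2\|d^n\|^2\bigr)\le\psi'(E(u^n)-\zeta)$. To move the KL lower bound from $u^n$ to a quantity built from $x^n$, you need a \emph{lower} bound on $E(u^n)-\zeta$.

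\emph{Your fallback does not reach the recursion.} The fallback itself is correct: for large $n$, $E(u^n)-\zeta\le c'\|d^n\|^{1/\theta}$, hence $r_{n+1}\le(1-\lambda_n)r_n+K\|d^n\|^{p}$ with $p=\min\{2,1/\theta\}$ and some constant $K$. But combined with $r_{n+1}\le r_n-M\|d^n\|^2$, it does not give $r_n\le\mathrm{const}\,(\|d^n\|^p+\|d^{n-1}\|^p)$. The term $(1-\lambda_n)r_n$ cannot be removed when $\lambda_n<1$. For example, $r_n=(1-\lambda)^n$ with $\lambda_n\equiv\lambda$ satisfies both inequalities while $\|d^n\|$ is arbitrarily small relative to $r_n$. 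Step 4 rests entirely on $r_n^{\theta}\le\mathrm{const}\,(\|d^n\|+\|d^{n-1}\|)$, so finite length is not established.

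\paragraph{How the paper closes it.} The paper never bounds the gap by step lengths. It reads Lemma \ref{lem:energy_relation_smooth_g2} in the other direction, $E(u^n)-\zeta\ge\Delta_{n+1}-\tilde K(\Delta_n-\Delta_{n+1})$, and splits into two cases.
If $\tilde K(\Delta_n-\Delta_{n+1})\le\frac12\Delta_{n+1}$, then $E(u^n)-\zeta\ge\frac12\Delta_{n+1}>0$. KL at $u^n$ together with Lemma \ref{lem:subgrad_bound_smooth_g2} then gives $\Delta_n-\Delta_{n+1}\ge\gamma\Delta_{n+1}^{2\theta}$.
Otherwise $\Delta_{n+1}\le q_2\Delta_n$ with $q_2<1$.
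From this it proves $\sum_n\sqrt{\Delta_n-\Delta_{n+1}}<\infty$: by geometric decay for $\theta\in[0,1/2]$, and for $\theta\in(1/2,1)$ by showing $\sqrt{\Delta_n-\Delta_{n+1}}\le C_3(\Delta_n^{1-\theta}-\Delta_{n+1}^{1-\theta})$ in both cases. It concludes via $\|d^n\|\le\sqrt{(\Delta_n-\Delta_{n+1})/M}$.

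\paragraph{An alternative repair that keeps your Step 4.} Use the fact that the line search tries $\lambda=1$ first.
If $\lambda_n<1$, the unit step was rejected. Then $r_n<E(u^n)-\zeta+\eta\|d^n\|^2\le c'\|d^n\|^{1/\theta}+\eta\|d^n\|^2$, and $r_{n+1}\le r_n$.
If $\lambda_n=1$, then $x^{n+1}=u^n$ and $r_{n+1}\le c'\|d^n\|^{1/\theta}$.
Either way $r_{n+1}\le(c'+\eta)\|d^n\|^{p}$ for large $n$, with $c'=0$ and $p=2$ when $\theta=0$. This is the index-shifted recursion your telescoping needs, using $\phi(s)=s^{1-\max\{\theta,1/2\}}$.

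Your Step 2 (uniformized KL on $\Omega$, $E(u^n)\to\zeta$, $E\equiv\zeta$ on $\Omega$) is more careful than the paper, which applies KL at $x^*$ before convergence to $x^*$ is known.
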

\begin{proof}
    From Lemma \ref{lem:summability}, we know that for $n \ge N_0$, $E(x^n)$ monotonically decreases to a limit $\zeta = E(x^*)$, $\lim_{n \to \infty} \|u^n - x^n\| = 0$, and there exists a constant $M = \frac{\eta\lambda_{\min}}{2} > 0$ such that
    \begin{equation}\label{eq:new_sufficient_descent}
        E(x^n) - E(x^{n+1}) \ge M \|u^n - x^n\|^2.
    \end{equation}
    Since $E$ is a KL function with exponent $\theta \in [0, 1)$, there exist constants $\bar{c}, \varepsilon, \nu > 0$ such that for any $x$ satisfying $\|x - x^*\| \le \varepsilon$ and $\zeta < E(x) < \zeta + \nu$, the KL property holds. Following the definition with the concave function $\psi(s) = c s^{1-\theta}$ for some $c>0$, we have its derivative $\psi'(s) = c(1-\theta)s^{-\theta}$. 
    
    For any sufficiently large $n$ where $E(u^n) > \zeta$, applying the KL inequality at $u^n$ yields:
    \begin{equation*}
        \psi'(E(u^n) - \zeta) \cdot \text{dist}(\textbf{0}, \partial E(u^n)) \ge 1.
    \end{equation*}
    By Lemma \ref{lem:subgrad_bound_smooth_g2}, we have $\text{dist}(\textbf{0}, \partial E(u^n)) \le C\|u^n - x^n\|$. Substituting this upper bound into the KL inequality, we obtain:
    \begin{equation*}
        \psi'(E(u^n) - \zeta) \cdot C \|u^n - x^n\| \ge \psi'(E(u^n) - \zeta) \cdot \text{dist}(\textbf{0}, \partial E(u^n)) \ge 1.
    \end{equation*}
    Using the explicit form of $\psi'(E(u^n) - \zeta) = c(1-\theta)(E(u^n) - \zeta)^{-\theta}$, this becomes:
    \begin{equation*}
        c(1-\theta)(E(u^n) - \zeta)^{-\theta} \cdot C \|u^n - x^n\| \ge 1.
    \end{equation*}
    Squaring both sides and rearranging the terms, we arrive at:
    \begin{equation}\label{eq:new_KL_bound}
        \|u^n - x^n\|^2 \ge \frac{1}{C^2 c^2 (1-\theta)^2} (E(u^n) - \zeta)^{2\theta} \triangleq C_0 (E(u^n) - \zeta)^{2\theta}.
    \end{equation}
    
    Let $\Delta_n = E(x^n) - \zeta \ge 0$. By Lemma \ref{lem:energy_relation_smooth_g2} and noting that $1-\lambda_n < 1$, we deduce:
    \begin{equation*}
        \Delta_{n+1} \le (1-\lambda_n)\Delta_n + \lambda_n(E(u^n) - \zeta) + \frac{L_{g_2}}{2}\lambda_n \|u^n - x^n\|^2.
    \end{equation*}
    Rearranging the above inequality and applying \eqref{eq:new_sufficient_descent}, we have:
    \begin{align}
        E(u^n) - \zeta
        &\ge \Delta_{n+1} - \frac{1-\lambda_n}{\lambda_n}(\Delta_n - \Delta_{n+1}) - \frac{L_{g_2}}{2M}(\Delta_n - \Delta_{n+1}) \nonumber \\
        &\ge \Delta_{n+1} - \tilde{K}(\Delta_n - \Delta_{n+1}), \label{eq:Eun_lower_bound}
    \end{align}
    where $\tilde{K} = \frac{1-\lambda_{\min}}{\lambda_{\min}} + \frac{L_{g_2}}{2M} > 0$. Based on the relation between $\Delta_{n+1}$ and $\Delta_n - \Delta_{n+1}$, we divide the proof into two cases.

    \textbf{Case 1:} $\tilde{K}(\Delta_n - \Delta_{n+1}) \le \frac{1}{2}\Delta_{n+1}$. 
    In this case, \eqref{eq:Eun_lower_bound} implies $E(u^n) - \zeta \ge \frac{1}{2}\Delta_{n+1} > 0$. Combining this with \eqref{eq:new_sufficient_descent} and \eqref{eq:new_KL_bound}, we obtain:
    \begin{equation}\label{eq:case1_descent}
        \Delta_n - \Delta_{n+1} \ge M \|u^n - x^n\|^2 \ge M C_0 (E(u^n) - \zeta)^{2\theta} \ge \frac{MC_0}{2^{2\theta}} \Delta_{n+1}^{2\theta} := \gamma \Delta_{n+1}^{2\theta}.
    \end{equation}

    \textbf{Case 2:} $\tilde{K}(\Delta_n - \Delta_{n+1}) > \frac{1}{2}\Delta_{n+1}$. 
    This condition directly implies a linear decrease:
    \begin{equation}\label{eq:case2_descent}
        \Delta_{n+1} < \frac{2\tilde{K}}{2\tilde{K}+1} \Delta_n := q_2 \Delta_n, \quad \text{where } q_2 < 1.
    \end{equation}

    We proceed to analyze the finite length property based on the KL exponent $\theta \in [0, 1)$:

     (i) \emph{When $\theta = 0$}: The KL property implies $\text{dist}(\textbf{0}, \partial E(u^n)) \ge \bar{c} > 0$ if $E(u^n) > \zeta$. 
    However, since $\text{dist}(\textbf{0}, \partial E(u^n)) \le C\|u^n - x^n\| \to 0$ as $n \to \infty$, we must obtain $E(u^n) \le \zeta$ for all sufficiently large $n$. 
    Furthermore, since $E(x)$ is lower semicontinuous at $x^*$ and $u^n \to x^*$, we have $\liminf_{n \to \infty} E(u^n) \ge E(x^*) = \zeta$, which implies $\lim_{n \to \infty} E(u^n) = \zeta$. Thus, there exists an integer $N_1 \ge N_0$ such that $E(u^n) \le \zeta$ for all $n \ge N_1$. Consequently, the non-positive term $\lambda_n(E(u^n) - \zeta) \le 0$ in Lemma~\ref{lem:energy_relation_smooth_g2} can be omitted directly. Letting $\Delta_n = E(x^n) - \zeta \ge 0$, we obtain for all $n \ge N_1$:
\begin{equation*}
    \Delta_{n+1} \le (1-\lambda_n)\Delta_n + \frac{L_{g_2}}{2}\lambda_n(1-\lambda_n)\|u^n - x^n\|^2.
\end{equation*}
Using $\|u^n - x^n\|^2 \le \frac{1}{M}(\Delta_n - \Delta_{n+1})$ from \eqref{eq:new_sufficient_descent}, we rearrange the above inequality to obtain:
\begin{equation*}
    \Delta_{n+1} \le \left( \frac{1 - \lambda_n + \frac{L_{g_2}\lambda_n}{2M}}{1 + \frac{L_{g_2}\lambda_n}{2M}} \right) \Delta_n \le q \Delta_n, \quad \text{where } q := \frac{1 - \lambda_{\min} + \frac{L_{g_2}\lambda_{\min}}{2M}}{1 + \frac{L_{g_2}\lambda_{\min}}{2M}} \in (0, 1).
\end{equation*}
This brings $\Delta_n \le \Delta_{N_1} q^{n-N_1}$. Consequently, we derive
\begin{equation*}
    \sum_{n=N_1}^{\infty} \|x^{n+1} - x^n\| \le \lambda_0 \sum_{n=N_1}^{\infty} \|u^n - x^n\| \le \frac{\lambda_0}{\sqrt{M}} \sum_{n=N_1}^{\infty} \sqrt{\Delta_n}  < +\infty.
\end{equation*}
Including the finite initial steps, we conclude $\sum_{n=0}^{\infty} \|x^{n+1} - x^n\| < +\infty$.

   (ii) \emph{When $\theta \in (0, 1/2]$:} 
    If Case 1 occurs, \eqref{eq:case1_descent} gives $\Delta_n - \Delta_{n+1} \ge \gamma \Delta_{n+1}^{2\theta}$. Since $\Delta_n \to 0$ and $2\theta \le 1$, for sufficiently large $n$, we have $\Delta_{n+1}^{2\theta} \ge \Delta_{n+1}$. Thus, $\Delta_n - \Delta_{n+1} \ge \gamma \Delta_{n+1}$, which yields $\Delta_{n+1} \le \frac{1}{1+\gamma}\Delta_n := q_1 \Delta_n$. 
    If Case 2 occurs, \eqref{eq:case2_descent} gives $\Delta_{n+1} \le q_2 \Delta_n$. 
    Defining $q = \max\{q_1, q_2\} \in (0, 1)$, we always have $\Delta_{n+1} \le q \Delta_n$. 
    Moreover, we have $\Delta_n \le \Delta_{N_0} q^{n-N_0}$. Since $\Delta_{n+1} \ge 0$, we get $\sqrt{\Delta_n - \Delta_{n+1}} \le  \sqrt{\Delta_n}$, which further brings
    \begin{equation*}
        \sum_{n=N_0}^{\infty} \sqrt{\Delta_n - \Delta_{n+1}} \le \sum_{n=N_0}^{\infty} \sqrt{\Delta_n} \le \sqrt{\Delta_{N_0}} \sum_{n=N_0}^{\infty} (\sqrt{q})^{n-N_0} < +\infty.
    \end{equation*}

    (iii) \emph{When $\theta \in (1/2, 1)$:} 
    Considering the concave function $h(s) = cs^{1-\theta}$, we have:
    \begin{equation}\label{eq:concave_ineq}
        \Delta_n^{1-\theta} - \Delta_{n+1}^{1-\theta} \ge (1-\theta)\Delta_n^{-\theta}(\Delta_n - \Delta_{n+1}).
    \end{equation}
    If Case 1 occurs, the condition $\tilde{K}(\Delta_n - \Delta_{n+1}) \le \frac{1}{2}\Delta_{n+1}$ implies $\Delta_n \le (1 + \frac{1}{2\tilde{K}})\Delta_{n+1} := \mu \Delta_{n+1}$ ($\mu \ge 1$), which gives $\Delta_n^{-\theta} \ge \mu^{-\theta}\Delta_{n+1}^{-\theta}$. Moreover, \eqref{eq:case1_descent} yields $\sqrt{\Delta_n - \Delta_{n+1}} \ge \sqrt{\gamma}\Delta_{n+1}^\theta$. Dividing \eqref{eq:concave_ineq} by $\sqrt{\Delta_n - \Delta_{n+1}}$, we obtain
    \begin{align*}
        \frac{\Delta_n^{1-\theta} - \Delta_{n+1}^{1-\theta}}{\sqrt{\Delta_n - \Delta_{n+1}}} &\ge (1-\theta)\mu^{-\theta} \Delta_{n+1}^{-\theta} \sqrt{\Delta_n - \Delta_{n+1}} \\
        &\ge (1-\theta)\mu^{-\theta} \Delta_{n+1}^{-\theta} \sqrt{\gamma} \Delta_{n+1}^\theta = (1-\theta)\mu^{-\theta}\sqrt{\gamma} := \hat{C} > 0.
    \end{align*}
    Thus, $\sqrt{\Delta_n - \Delta_{n+1}} \le \frac{1}{\hat{C}} (\Delta_n^{1-\theta} - \Delta_{n+1}^{1-\theta})$. 
    If Case 2 occurs, \eqref{eq:case2_descent} gives $\Delta_{n+1} < q_2 \Delta_n$. Then we have
    \begin{equation*}
        \frac{\sqrt{\Delta_n - \Delta_{n+1}}}{\Delta_n^{1-\theta} - \Delta_{n+1}^{1-\theta}} \le \frac{\sqrt{\Delta_n}}{\Delta_n^{1-\theta} - q_2^{1-\theta}\Delta_n^{1-\theta}} = \frac{1}{1-q_2^{1-\theta}} \Delta_n^{\theta - 1/2}.
    \end{equation*}
    Since $\theta > 1/2$ and $\Delta_n \to 0$, we have $\Delta_n^{\theta - 1/2} \to 0$. Thus, for sufficiently large $n$, there exists $C_2 > 0$ such that $\sqrt{\Delta_n - \Delta_{n+1}} \le C_2 (\Delta_n^{1-\theta} - \Delta_{n+1}^{1-\theta})$.
   Combining both cases, there exists $C_3 = \max\{1/\hat{C}, C_2\} > 0$ such that for all large $n$,
    \begin{equation*}
        \sqrt{\Delta_n - \Delta_{n+1}} \le C_3 (\Delta_n^{1-\theta} - \Delta_{n+1}^{1-\theta}).
    \end{equation*}
    Summing the series from $n = N_0$ to $\infty$, we get
    \begin{equation*}
        \sum_{n=N_0}^{\infty} \sqrt{\Delta_n - \Delta_{n+1}} \le C_3 \sum_{n=N_0}^{\infty} (\Delta_n^{1-\theta} - \Delta_{n+1}^{1-\theta}) \le C_3 \Delta_{N_0}^{1-\theta} < +\infty.
    \end{equation*}

    Finally, across all $\theta \in [0, 1)$, we have established that $\sum_{n=N_0}^{\infty} \sqrt{\Delta_n - \Delta_{n+1}} < \infty$. Given the line search update $x^{n+1} - x^n = \lambda_n(u^n - x^n)$ with $\lambda_n \le \lambda_0$, and applying the sufficient descent condition \eqref{eq:new_sufficient_descent}, we conclude
    \begin{align*}
        \sum_{n=N_0}^{\infty} \|x^{n+1} - x^n\| &\le \lambda_0 \sum_{n=N_0}^{\infty} \|u^n - x^n\| 
        \le \frac{\lambda_0}{\sqrt{M}} \sum_{n=N_0}^{\infty} \sqrt{\Delta_n - \Delta_{n+1}} < +\infty.
    \end{align*}
    Including the finite initial steps, we ultimately have $\sum_{n=0}^{\infty} \|x^{n+1} - x^n\| < +\infty$. 
\end{proof}

\subsection{Local convergence}
The local convergence rate of our algorithm is determined by the Kurdyka-\L ojasiewicz (KL) exponent of the energy function $E$. Building upon the convergence rate frameworks established in \cite{Attouch2009, Artacho2018}, we characterize the behavior of the iterates $\{x^n\}_n$ near a critical point $x^*$ as follows.

\begin{theorem}[Local Convergence Rate]\label{thm:convergence_rate}
Under the assumptions of Theorem \ref{global}, let $\{x^n\}_n$ be the sequence generated by Algorithm \ref{alg:prox_bb_dca} converging to $x^*$. If $E(x)$ is a KL function with exponent $\theta \in [0, 1)$, the following properties hold
\begin{enumerate}
    \item[\text{(i)}] If $\theta = 0$, the sequence $\{x^n\}_n$ converges to $x^*$ in a finite number of steps.
    \item[\text{(ii)}] If $\theta \in (0, 1/2]$, there exist $p_1 > 0$ and $\gamma \in (0, 1)$ such that $\|x^n - x^*\| \le p_1 \gamma^n$ for all $n$ sufficiently large.
    \item[\text{(iii)}] If $\theta \in (1/2, 1)$, there exists $p_2 > 0$ such that $\|x^n - x^*\| \le p_2 n^{-\frac{1-\theta}{2\theta-1}}$ for all $n$ sufficiently large.
\end{enumerate}
\end{theorem}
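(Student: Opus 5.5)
We follow the Attouch--Bolte framework and work under Theorem \ref{global} (so $\nabla g_1$ is $L_{g_1}$-Lipschitz). Three facts from its proof are used repeatedly. First, the sufficient descent \eqref{eq:eventual_descent}: $E(x^n)-E(x^{n+1})\ge M\|u^n-x^n\|^2$ for $n\ge N_0$. Second, the relative error bound $\dist(\textbf{0},\partial E(x^n))\le C\|u^n-x^n\|$. Third, $\|x^{n+1}-x^n\|\le\lambda_0\|u^n-x^n\|$.

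Set $\Delta_n=E(x^n)-\zeta\ge 0$. If $\Delta_{n_0}=0$ for some $n_0\ge N_0$, monotonicity gives $\Delta_n=0$ afterwards. Then descent forces $u^n=x^n$, so $d^n=0$ and the algorithm stops (or the iterates are stationary). In that case all three claims are trivial, so we assume $\Delta_n>0$ for all $n$. For $n$ large, $x^n$ lies in the KL neighbourhood of $x^*$ and $\Delta_n<\nu$. The KL inequality with $\psi(s)=cs^{1-\theta}$ at $x^n$, combined with the error bound and descent, gives
\[
\Delta_n^{2\theta}\le \big(c(1-\theta)C\big)^2\|u^n-x^n\|^2\le \frac{\big(c(1-\theta)C\big)^2}{M}(\Delta_n-\Delta_{n+1}).
\]
We write this as $\Delta_n^{2\theta}\le \kappa(\Delta_n-\Delta_{n+1})$. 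The inequality is evaluated at $x^n$ itself, so it is cleaner than the $u^n$-based argument of Theorem \ref{thm:global_smooth_g2}.

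\textbf{Case $\theta=0$.} Here $\dist(\textbf{0},\partial E(x^n))\ge 1/c$ whenever $\Delta_n>0$. But the left side is at most $C\|u^n-x^n\|\to0$, a contradiction. Hence $\Delta_n=0$ eventually, and by the remark above the sequence is eventually constant, i.e.\ it converges in finitely many steps.

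\textbf{Cases $\theta>0$.} We reduce the rate of $x^n$ to the rate of $\Delta_n$ via the tail length. Summing the KL/concavity estimate from the proof of Theorem \ref{global}(iv) gives
\[
\sum_{k\ge n}\|u^k-x^k\|\le K\psi(\Delta_n),
\]
hence
\[
\|x^n-x^*\|\le\sum_{k\ge n}\|x^{k+1}-x^k\|\le \lambda_0 K c\,\Delta_n^{1-\theta}.
\]
A sharper alternative bounds $\|x^n-x^*\|$ by $\sum_{k\ge n}\sqrt{(\Delta_k-\Delta_{k+1})/M}$.

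For $\theta\in(0,1/2]$: since $\Delta_n\to0$ we have $\Delta_n\le\Delta_n^{2\theta}$ eventually. Then $\Delta_{n+1}\le(1-1/\kappa)\Delta_n$, with $\kappa\ge1$ without loss of generality. So $\Delta_n$ decays Q-linearly, and so does $\Delta_n^{1-\theta}$, which gives (ii).

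For $\theta\in(1/2,1)$ we use the standard Attouch--Bolte comparison. Let $\phi(s)=s^{1-2\theta}/(2\theta-1)$. We split on whether $\Delta_{n+1}^{-2\theta}\le 2\Delta_n^{-2\theta}$.
\begin{itemize}
\item If it holds, $\phi(\Delta_{n+1})-\phi(\Delta_n)=\int_{\Delta_{n+1}}^{\Delta_n}s^{-2\theta}ds\ge\Delta_n^{-2\theta}(\Delta_n-\Delta_{n+1})\ge 1/\kappa$.
\item If it fails, $\Delta_{n+1}\le 2^{-1/(2\theta)}\Delta_n$. Then $\Delta_{n+1}^{1-2\theta}-\Delta_n^{1-2\theta}\ge(2^{(2\theta-1)/(2\theta)}-1)\Delta_n^{1-2\theta}$, which is bounded below by a positive constant since $\Delta_n\to0$.
\end{itemize}
In either case $\Delta_n^{1-2\theta}\ge \mu n$ eventually. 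Hence $\Delta_n\le C' n^{-1/(2\theta-1)}$, and therefore $\|x^n-x^*\|\le \lambda_0Kc\,\Delta_n^{1-\theta}\le p_2\, n^{-(1-\theta)/(2\theta-1)}$, which is (iii).

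The main obstacle is bookkeeping rather than ideas. One issue is making sure the KL neighbourhood condition holds for all large $n$; this follows from $x^n\to x^*$ by Theorem \ref{global}(iv) and $E(x^n)\downarrow\zeta=E(x^*)$. The latter uses continuity of $E$ along the iterates, since $f,g_1$ are smooth and $g_2$ is convex and finite, hence continuous. The other issue is justifying the tail-length bound with constants independent of $n$. The $\theta=0$ finite-termination step relies on the stopping rule $d^n=0$, and we will state that interpretation explicitly.
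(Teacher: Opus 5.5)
Your argument is correct. It is the Attouch--Bolte rate argument that the paper invokes only by citation to \cite{Attouch2009, Artacho2018} (the paper gives no proof of its own), instantiated with the descent inequality \eqref{eq:eventual_descent} and the error bound $\dist(\textbf{0},\partial E(x^n))\le C\|u^n-x^n\|$ from the proof of Theorem \ref{global}. One simplification: your inequality $\Delta_n^{2\theta}\le\kappa(\Delta_n-\Delta_{n+1})$ already has the larger index on the left, so $\int_{\Delta_{n+1}}^{\Delta_n}s^{-2\theta}\,ds\ge\Delta_n^{-2\theta}(\Delta_n-\Delta_{n+1})\ge 1/\kappa$ holds unconditionally, and the case split in (iii) is unnecessary.
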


\section{Convergence Analysis of pABBDCA$_{\text{er}}$}\label{sec:alg2_conv}

To address the non-monotonicity introduced by extrapolation, this section establishes the convergence properties of pABBDCA$_{\text{er}}$ (Algorithm \ref{alg:prox_bb_dca_ext_res}), which relies on a heuristic restart mechanism to recover theoretical stability. We first develop a sequence of foundational lemmas for the subsequent Kurdyka-\L{}ojasiewicz (KL) convergence analysis.

\begin{lemma}\label{lem:step_lower_bound}

 Let $\{x^n\}_n$ be the sequence generated by Algorithm \ref{alg:prox_bb_dca_ext_res}. Then, the accepted step size $\lambda_n$ is uniformly bounded away from zero. That is, there exists a constant $\lambda_{\min} > 0$ such that $\lambda_n \ge \lambda_{\min}$ for all $n \ge 0$.

\end{lemma}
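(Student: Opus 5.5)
The plan is to split according to whether the accepted step is produced with extrapolation active ($\beta_n>0$) or with $\beta_n=0$. The case $\beta_n=0$ covers $n\le 1$, where $\frac{n-1}{n+2}\le 0$, and also every iteration after a restart. The restart rule itself handles the first case. The second case needs a descent estimate analogous to Lemma \ref{lemma:sufficient_descent}, adapted to the subproblem \eqref{eq:algorithm2:prox}. In that subproblem $f$ sits inside the proximal problem instead of being linearized. The expected conclusion is $\lambda_n \ge \lambda_{\min} := \rho^{N_{\max}}$ (with $\lambda_0=1$).

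\emph{Case $\beta_n>0$ and no restart.} The rule in Algorithm \ref{alg:prox_bb_dca_ext_res} accepts a trial $\lambda_n=\lambda_0\rho^k$ with $\beta_n>0$ only if $k\le N_{\max}$. Otherwise the algorithm restarts as soon as $k$ exceeds $N_{\max}$, so the backtracking loop with $\beta_n>0$ is also finite. Hence any step accepted in this branch satisfies $\lambda_n\ge \lambda_0\rho^{N_{\max}}$.

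\emph{Case $\beta_n=0$.} Here $y^n=x^n$. Set $F=f+g_1$, which is convex, and
\[
Q_n(x)=\langle -\xi^n,x-x^n\rangle+\tfrac{1}{2\alpha_n}\|x-x^n\|^2+F(x).
\]
The function $Q_n$ is $\frac1{\alpha_n}$-strongly convex with minimizer $u^n$, so $Q_n(x^n)\ge Q_n(u^n)+\frac{1}{2\alpha_n}\|d^n\|^2$. This gives
\[
F(u^n)-F(x^n)\le \langle \xi^n,d^n\rangle-\tfrac{1}{\alpha_n}\|d^n\|^2 .
\]
Convexity of $F$ along the segment gives $F(x^n+\lambda d^n)\le F(x^n)+\lambda(F(u^n)-F(x^n))$. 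The subgradient inequality for $g_2$ at $x^n$ gives $-g_2(x^n+\lambda d^n)\le -g_2(x^n)-\lambda\langle\xi^n,d^n\rangle$. Adding these, the $\langle\xi^n,d^n\rangle$ terms cancel, and for every $\lambda\in(0,1]$
\[
E(x^n+\lambda d^n)\le E(x^n)-\tfrac{\lambda}{\alpha_n}\|d^n\|^2 \le E(x^n)-\tfrac{\lambda}{\alpha_{\max}}\|d^n\|^2 .
\]
Since $\eta<1/\alpha_{\max}$ and $\nu_n\ge 0$, the line search condition already holds at $k=0$. Therefore $\lambda_n=\lambda_0=1$, and no Lipschitz constant of $\nabla f$ enters. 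This works under either Assumption \ref{assum:smooth_g1} or \ref{assum:smooth_g2}, because it uses only convexity of $f,g_1,g_2$ and $\xi^n\in\partial g_2(x^n)$.

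Combining the two cases gives $\lambda_n\ge\lambda_{\min}:=\lambda_0\rho^{N_{\max}}>0$ for all $n$, which also shows that each iteration's line search terminates finitely. The main point needing care is the bookkeeping of the restart branch. One must check that after a restart the recomputed $u^n$, $d^n$ and $\nu_n$ come from the $\beta_n=0$ subproblem, so that the second case genuinely applies to the step that is finally accepted. One must also check that a restart happens at most once per iteration, since after it $\beta_n=0$ and the restart condition $\beta_n>0$ can no longer fire. The descent estimate itself is routine once it is noticed that keeping $f$ inside the proximal subproblem lets convexity replace the $L$-smoothness bound used in Lemma \ref{lemma:sufficient_descent}.
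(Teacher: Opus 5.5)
Your proof is correct. It has the same basic structure as the paper's: steps accepted with extrapolation active within $N_{\max}$ backtracks satisfy $\lambda_n\ge\lambda_0\rho^{N_{\max}}$, and the remaining steps are taken with $y^n=x^n$.

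You differ from the paper in how you handle the unextrapolated case. The paper cites Lemma \ref{lem:well-definedness}. That lemma, however, is proved for the subproblem \eqref{eq:algorithmic:prox} of Algorithm \ref{alg:prox_bb_dca}, where $f$ is linearized and the $L$-smoothness bound gives only $\lambda_n\ge\min\{\lambda_0,\rho\bar\lambda\}$ with $\bar\lambda=\tfrac{2}{L}\bigl(\tfrac{1}{\alpha_{\max}}-\eta\bigr)$. The subproblem \eqref{eq:algorithm2:prox} with $y^n=x^n$ is a different problem, so the paper's citation is really an analogy rather than a literal application.

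You instead derive the descent estimate for the actual subproblem: $E(x^n+\lambda d^n)\le E(x^n)-\tfrac{\lambda}{\alpha_n}\|d^n\|^2$. This is exactly the $\beta_n=0$ instance of Lemma \ref{lem:quasi_descent}. It shows the full step $\lambda_n=1$ is accepted at once and yields the explicit, $L$-independent constant $\lambda_{\min}=\lambda_0\rho^{N_{\max}}$. Your split according to whether $y^n=x^n$ also covers $n\le 1$ cleanly. At those iterations no restart can fire, and the paper's dichotomy (success within $N_{\max}$ versus restart) leaves them implicit. So your route both fills the small gap in the paper's argument and gives a sharper bound.

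Two minor points, the first shared with the paper:
\begin{itemize}
\item The inequality $1/\alpha_n\ge 1/\alpha_{\max}>\eta$ needs $\alpha_0\le\alpha_{\max}$, since $\alpha_0$ is an arbitrary input. If this fails, iteration $n=0$ may backtrack. It still terminates because $\nu_0=\omega\|d^0\|^2>0$, so this changes $\lambda_{\min}$ by at most one finite positive value.
\item For $n=0$ the formula actually gives $\beta_0=\min\{-\tfrac12,\overline{\beta}\}=-\tfrac12$, not $0$. What you really use there is $y^0=x^0$, which follows from $x^{-1}=x^0$.
\end{itemize}
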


\begin{proof}

For any iteration $n$, the nonmonotone line search terminates under two exclusive cases:

 (i) \textbf{Successful Extrapolation:} The line search condition is satisfied within the maximum allowed iterations $N_{\max}$. In this case, we have
$$\lambda_n \ge \lambda_0 \rho^{N_{\max}}.$$

(ii) \textbf{Restart Triggered:} In this case, the line search fails after $N_{\max}$ iterations and the restart mechanism is triggered. Now that $\beta_n = 0$, we obtain $y^n = x^n$. 
According to Lemma \ref{lem:well-definedness}, the nonmonotone line search will terminate in a finite number of steps and there exists a constant $\tilde{\lambda}_{\min} > 0$ such that $\lambda_n \ge \tilde{\lambda}_{\min}$ for all $n \ge 0$.

Combining both cases, we define $$\lambda_{\min} = \min\{\lambda_0 \rho^{N_{\max}},\tilde{\lambda}_{\min}\} > 0.$$

It follows that $\lambda_n \ge \lambda_{\min}$ holds globally for all $n \ge 0$.
\end{proof}

\begin{lemma}\label{lem:sufficient_descent}

Let $\{x^n\}_n$ be the sequence generated by Algorithm \ref{alg:prox_bb_dca_ext_res}. There exists an integer $N_0 \ge 0$ such that for all $n \ge N_0$, the sequence of energy values $\{E(x^n)\}$ is monotonically decreasing. Furthermore, we have $$\sum_{n=0}^{\infty} \|x^{n+1} - x^n\|^2 < \infty.$$
\end{lemma}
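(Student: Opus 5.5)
The argument mirrors Lemma \ref{lem:summability}. The only inputs it needs are the accepted line search inequality and the uniform lower bound on $\lambda_n$ from Lemma \ref{lem:step_lower_bound}. The extrapolation changes how $u^n$ (and hence $d^n$) is produced, but it does not change the form of the acceptance test. At every iteration of Algorithm \ref{alg:prox_bb_dca_ext_res} the accepted $\lambda_n$ satisfies
\begin{equation*}
E(x^{n+1}) \le E(x^n) - \eta\lambda_n\|d^n\|^2 + \frac{\omega}{n+1}\|d^n\|^2 ,
\end{equation*}
and this holds whether the extrapolated direction was accepted or a restart occurred with $y^n=x^n$.

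The first step is to insert the bound $\lambda_n\ge\lambda_{\min}$ from Lemma \ref{lem:step_lower_bound}, which gives
\begin{equation*}
E(x^{n+1})\le E(x^n)-\Bigl(\eta\lambda_{\min}-\tfrac{\omega}{n+1}\Bigr)\|d^n\|^2 .
\end{equation*}
Next, choose $N_0$ so that $\frac{\omega}{n+1}\le \frac{\eta\lambda_{\min}}{2}=:M$ for all $n\ge N_0$. For those $n$ this yields $E(x^{n+1})\le E(x^n)-M\|d^n\|^2$, so the energies are monotonically decreasing from $N_0$ onward.

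For summability, telescope this inequality from $N_0$ to $N$. Together with the standing lower boundedness of $E$, this gives
\begin{equation*}
M\sum_{n=N_0}^{N}\|d^n\|^2\le E(x^{N_0})-\inf E<\infty .
\end{equation*}
The indices $n<N_0$ contribute only finitely many finite terms, so $\sum_n\|d^n\|^2<\infty$. Finally, $x^{n+1}-x^n=\lambda_n d^n$ with $\lambda_n\le\lambda_0=1$, so $\|x^{n+1}-x^n\|^2\le\|d^n\|^2$ and the claimed series converges.

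The only delicate point is the well-definedness of the accepted $\lambda_n$, that is, that the line search/restart loop terminates. In the extrapolated case, Lemma \ref{lemma:sufficient_descent} does not apply directly, since $u^n$ is computed at $y^n$ rather than $x^n$. This is handled by the restart rule, which falls back to $y^n=x^n$. I plan to note that in this fallback the subproblem \eqref{eq:algorithm2:prox} treats $f$ implicitly. The Lemma \ref{lemma:sufficient_descent}-type argument then goes through with $F=f+g_1$ convex in place of $g_1$, and in fact gives descent at rate $\lambda_n/\alpha_n$ without the $L$ term. This is exactly what Lemma \ref{lem:step_lower_bound} already takes for granted, so I will simply cite it. The remaining steps are routine, and I do not expect a genuine obstacle beyond checking that $\nu_n$ carries the same $\|d^n\|^2$ as the descent term, which it does by definition.
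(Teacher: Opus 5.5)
Your proposal is correct and is essentially the paper's proof. The paper simply reruns Lemma \ref{lem:summability} with the bound $\lambda_n\ge\lambda_{\min}$ from Lemma \ref{lem:step_lower_bound}, choosing $N_0$ so that $\frac{\omega}{n+1}\le\frac{\eta\lambda_{\min}}{2}$, telescoping, and using that $E$ is bounded below and $\lambda_n\le 1$. Your side remark is also sound: after a restart the implicit-$f$ subproblem gives $E(x^n+\lambda d^n)\le E(x^n)-\frac{\lambda}{\alpha_n}\|d^n\|^2$, so $\lambda_n=1$ is accepted. This is a cleaner justification than the paper's appeal to Lemma \ref{lem:well-definedness}, which was proved for the linearized subproblem of Algorithm \ref{alg:prox_bb_dca}.
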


\begin{proof}
The proof follows identical arguments to those in Lemma \ref{lem:summability}, utilizing the nonmonotone line search condition and the uniform step size lower bound established in Lemma \ref{lem:step_lower_bound}. To avoid redundancy, the detailed algebraic derivations are omitted here.
\end{proof}

\subsection{Global convergence under Assumption \ref{assum:smooth_g1}}
We are now in a position to establish the global convergence theorem.

\begin{theorem}\label{thm:global_alg2}
Let $\{x^{n}\}_n$ be generated by Algorithm \ref{alg:prox_bb_dca_ext_res} for solving \eqref{eq:basefunctional}. Assume that $\nabla f(x)$ and $\nabla g_{1}(x)$ are Lipschitz continuous and $E(x)$ is a KL function. Then, the sequence $\{x^{n}\}_n$ globally converges to a critical point $x^*$ with
\begin{equation}
    \sum_{n=0}^{\infty}\|x^{n+1}-x^{n}\| < \infty.
\end{equation}
\end{theorem}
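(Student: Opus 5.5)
The plan is to follow the KL argument of Theorem \ref{global}(iv), with two changes. The relative-error bound must absorb the extrapolation error through a two-term estimate. The summation step then becomes the standard Attouch--Bolte telescoping with a lagged term.

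\textbf{Step 1: descent, boundedness and the limit value.} By Lemma \ref{lem:sufficient_descent} and Lemma \ref{lem:step_lower_bound}, for $n\ge N_0$ we have
\[
E(x^n)-E(x^{n+1})\ge M\|d^n\|^2, \qquad M=\tfrac{\eta\lambda_{\min}}{2}.
\]
Hence $E(x^n)\downarrow\zeta$, $\sum_n\|d^n\|^2<\infty$ and $d^n\to 0$. The iterates stay in $\text{lev}_{\le E(x^{N_0})}(E)$, so they are bounded. Under Assumption \ref{assum:smooth_g1}, $f$ and $g_1$ are continuous. I will take $g_2$ to be continuous on the relevant level set, which holds automatically if $g_2$ is real valued, since it is then a finite convex function in finite dimension. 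With this, $E$ is continuous, and $E\equiv\zeta$ on the compact, connected set $\Omega$ of cluster points. This is what lets me apply the uniformized KL property on $\Omega$, in the form of \cite[Lemma 6]{Bolte2014}. I expect this continuity point to need care: $-g_2$ is only upper semicontinuous in general, so the statement implicitly requires it.

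\textbf{Step 2: relative error with extrapolation.} The optimality condition of \eqref{eq:algorithm2:prox} reads
\[
\xi^n-\tfrac{1}{\alpha_n}(u^n-y^n)=\nabla f(u^n)+\nabla g_1(u^n).
\]
Since $\xi^n\in\partial g_2(x^n)$, the element
\[
w^n:=\nabla f(x^n)+\nabla g_1(x^n)-\xi^n\in\partial E(x^n)
\]
satisfies
\[
w^n=[\nabla f(x^n)-\nabla f(u^n)]+[\nabla g_1(x^n)-\nabla g_1(u^n)]-\tfrac{1}{\alpha_n}(u^n-y^n).
\]
I then use $u^n-y^n=d^n-\beta_n(x^n-x^{n-1})$, $\beta_n\le\overline{\beta}$ and $\|x^n-x^{n-1}\|=\lambda_{n-1}\|d^{n-1}\|\le\|d^{n-1}\|$. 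This gives
\[
\dist(\textbf{0},\partial E(x^n))\le C_1\|d^n\|+C_2\|d^{n-1}\|,
\]
with $C_1=L+L_{g_1}+1/\alpha_{\min}$ and $C_2=\overline{\beta}/\alpha_{\min}$. The same identity, combined with the closedness of $\partial g_2$ and $d^n\to 0$, shows that every cluster point is critical, exactly as in Theorem \ref{global}(iii).

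\textbf{Step 3: KL summation.} If $E(x^{\bar n})=\zeta$ for some $\bar n\ge N_0$, then the descent inequality forces $d^n=0$ for all $n\ge\bar n$ and the sequence is eventually constant. Otherwise, for large $n$, $x^n$ lies in the KL neighbourhood of $\Omega$. Concavity of $\psi$ gives, with $\Psi_n:=\psi(E(x^n)-\zeta)$,
\[
M\|d^n\|^2\le E(x^n)-E(x^{n+1})\le (\Psi_n-\Psi_{n+1})\bigl(C_1\|d^n\|+C_2\|d^{n-1}\|\bigr).
\]
Setting $C=\max\{C_1,C_2\}$ and applying $2\sqrt{ab}\le a+b$,
\[
2\|d^n\|\le \tfrac12\bigl(\|d^n\|+\|d^{n-1}\|\bigr)+\tfrac{2C}{M}(\Psi_n-\Psi_{n+1}).
\]
Summing from a large index $N_1$ to $N$, the $\tfrac12\|d^n\|$ terms are absorbed on the left and $\Psi$ telescopes. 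This yields
\[
\sum_{n\ge N_1}\|d^n\|\le \|d^{N_1-1}\|+\tfrac{2C}{M}\Psi_{N_1}<\infty.
\]
Since $\|x^{n+1}-x^n\|\le\lambda_0\|d^n\|$, the sequence has finite length, so it is Cauchy and converges to some $x^*\in\Omega$, which is critical by Step 2.

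The main obstacle, beyond the continuity point in Step 1, is the lagged term $\|d^{n-1}\|$ in the error bound. The $2\sqrt{ab}\le a+b$ splitting is the device I intend to use to handle it.
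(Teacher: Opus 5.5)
Your proposal is correct and follows essentially the same route as the paper. You use the two-term relative error bound $\dist(\textbf{0},\partial E(x^n))\le C(\|u^n-x^n\|+\|x^n-x^{n-1}\|)$ from the optimality condition of \eqref{eq:algorithm2:prox}, combine the KL inequality at $x^n$ with concavity of $\psi$ and the sufficient descent $E(x^n)-E(x^{n+1})\ge M\|d^n\|^2$, and then absorb the lagged term by an AM--GM splitting before telescoping ($a^2\le cd\Rightarrow a\le c+d/4$ in the paper, $2\sqrt{ab}\le a+b$ in yours). Your extra care (uniformized KL on $\Omega$, the case $E(x^{\bar n})=\zeta$) only fills in details the paper leaves implicit, and the continuity worry in Step 1 is automatic: since $E$ is assumed bounded below and $f+g_1$ is finite everywhere, $g_2$ must be real valued, hence continuous.
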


\begin{proof}
Following identical arguments to those in Theorem \ref{global}, the sequence $\{x^n\}_n$ is bounded, and any cluster point $x^*$ is a critical point of problem \eqref{eq:basefunctional}. 

To establish the convergence via the KL property, the crucial distinction lies in the subgradient bound due to the extrapolation step. From the optimality condition of the extrapolated subproblem, we bound the subgradient of $E(x)$ at $x^n$ as
\begin{align*}
  \dist(\textbf{0},\partial E(x^n)) &\le \|\nabla f(x^n) - \nabla f(u^n)\| + \|\nabla g_1(x^n) - \nabla g_1(u^n)\| + \left\|\frac{1}{\alpha_n}(u^n-y^n) \right\| \\
    &\le (L + L_{g_1})\|u^n-x^n\| + \frac{1}{\alpha_{\min}}(\|u^n-x^n\| + \beta_n\|x^n-x^{n-1}\|) \\
    &\le C(\|u^n-x^n\| + \|x^{n}-x^{n-1}\|),
\end{align*}
where $C = L + L_{g_1} + \frac{1}{\alpha_{\min}} > 0$. Since $\|u^n-x^n\|\to 0$ and $\|x^{n}-x^{n-1}\| \to 0$, we have $\text{dist}(\textbf{0},\partial E(x^n)) \to 0$.

Assume $\psi$ is a continuous concave function given by the KL property. Letting $\phi_n = \psi(E(x^n)-\zeta) - \psi(E(x^{n+1})-\zeta)$, the concavity of $\psi$ implies that $\phi_n \ge \psi'(E(x^n)-\zeta)[E(x^n) - E(x^{n+1})]$. Applying the KL inequality with Lemma \ref{lem:sufficient_descent}, we obtain
\begin{align*}
    \phi_n \cdot \text{dist}(\textbf{0},\partial E(x^n)) 
    &\ge \psi'(E(x^n)-\zeta) \cdot \text{dist}(\textbf{0},\partial E(x^n)) \cdot [E(x^n) - E(x^{n+1})] \\
    &\ge E(x^n) - E(x^{n+1}) \ge M\|u^n-x^n\|^2.
\end{align*}
where $M = \frac{\eta \lambda_{\min}}{2}$. Combining the above bounds and denoting $K = \frac{M}{C}$, we arrive at
\begin{equation}
    \|u^n-x^n\|^2 \le K \phi_n (\|u^n-x^n\| + \|u^{n-1}-x^{n-1}\|).
\end{equation}
Applying the geometric mean inequality $a^2 \le cd \implies a \le c + \frac{d}{4}$, we obtain
\begin{equation}
    \frac{1}{2}\|u^n-x^n\| \le K\phi_n + \frac{1}{4}(\|u^{n-1}-x^{n-1}\| - \|u^n-x^n\|).
\end{equation}
Summing this inequality from $N_0$ to $\infty$  yields
\begin{equation}
    \frac{1}{2}\sum_{n=N_0}^{\infty}\|u^n-x^n\| \le K \psi(E(x^{N_0})-\zeta) + \frac{1}{4}\|u^{N_0-1}-x^{N_0-1}\| < +\infty.
\end{equation}
Given $\|x^{n+1}-x^n\| \le \lambda_0\|u^n-x^n\|$, we ultimately conclude $\sum_{n=0}^{\infty}\|x^{n+1}-x^n\| < +\infty$.
\end{proof}

\subsection{Global convergence under Assumption \ref{assum:smooth_g2}}

In this section, we establish the global convergence of Algorithm \ref{alg:prox_bb_dca_ext_res} (pABBDCA$_{\text{er}}$) under the assumption that $g_2$ is continuously differentiable with an $L_{g_2}$-Lipschitz continuous gradient, while $g_1$ is allowed to be non-smooth. We first establish a subgradient bound that controls the subgradient of the energy function by the step variations.

\begin{lemma}\label{lem:subgrad_bound_alg2_smooth_g2}
    Suppose that $\nabla g_2$ is Lipschitz continuous with constant $L_{g_2}$. Let $\{x^n\}_n$ and $\{u^n\}_n$ be the sequences generated by Algorithm \ref{alg:prox_bb_dca_ext_res}. Then, there exists a constant $C > 0$ such that
    \begin{equation}
      \dist(\emph{\textbf{0}}, \partial E(u^n)) \le C \left( \|u^n - x^n\| + \|x^n - x^{n-1}\| \right).
    \end{equation}
\end{lemma}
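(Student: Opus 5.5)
We follow the same strategy as Lemma \ref{lem:subgrad_bound_smooth_g2}: read off a subgradient of $g_1$ at $u^n$ from the optimality condition of the subproblem \eqref{eq:algorithm2:prox}, then assemble an element of $\partial E(u^n)$ and bound its norm. The difference is that the subproblem of Algorithm \ref{alg:prox_bb_dca_ext_res} keeps $f$ inside and is centered at $y^n$ rather than $x^n$. Under Assumption \ref{assum:smooth_g2} we have $\xi^n=\nabla g_2(x^n)$. Since $f$ is smooth and $g_1$ is convex, the sum rule gives the first-order condition
\begin{equation*}
\eta^n := \nabla g_2(x^n) - \nabla f(u^n) - \frac{1}{\alpha_n}(u^n - y^n) \in \partial g_1(u^n).
\end{equation*}
This holds whether or not the restart is triggered: in the restart case $\beta_n=0$ and $y^n=x^n$, and the same formula applies.

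Next, using $\partial E(u^n)=\nabla f(u^n)+\partial g_1(u^n)-\nabla g_2(u^n)$, we form
\begin{equation*}
w^n := \nabla f(u^n)+\eta^n-\nabla g_2(u^n) = \big(\nabla g_2(x^n)-\nabla g_2(u^n)\big) - \frac{1}{\alpha_n}(u^n-y^n) \in \partial E(u^n).
\end{equation*}
Because $f$ is evaluated at $u^n$ in the subproblem, the gradient terms of $f$ cancel exactly, so no $L$ appears. We then substitute $u^n-y^n=(u^n-x^n)-\beta_n(x^n-x^{n-1})$ and apply the triangle inequality together with $\beta_n\le\overline{\beta}<1$, $\alpha_n\ge\alpha_{\min}$ and the $L_{g_2}$-Lipschitz continuity of $\nabla g_2$. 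This gives
\begin{equation*}
\|w^n\| \le \Big(L_{g_2}+\frac{1}{\alpha_{\min}}\Big)\|u^n-x^n\| + \frac{1}{\alpha_{\min}}\|x^n-x^{n-1}\|,
\end{equation*}
so $C=L_{g_2}+\frac{1}{\alpha_{\min}}$ suffices. Finally, $\dist(\textbf{0},\partial E(u^n))\le\|w^n\|$ completes the argument.

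The only real care point is justifying the optimality condition, that is, the subdifferential sum rule for $f+g_1$ plus a strongly convex quadratic. This is standard, since $f$ is finite and differentiable. We also need $\alpha_n\ge\alpha_{\min}$ at every iteration, which holds for $n\ge1$ by the clipping in the ABB update; for $n=0$ one either assumes $\alpha_0\ge\alpha_{\min}$ or replaces $\alpha_{\min}$ by $\min\{\alpha_0,\alpha_{\min}\}$ in $C$. Apart from this, the computation is routine.
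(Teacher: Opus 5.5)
Your proposal is correct and follows the paper's proof essentially verbatim: the same optimality-condition subgradient $\eta^n\in\partial g_1(u^n)$, the same cancellation of $\nabla f(u^n)$ in $w^n$, and the same constant $C=L_{g_2}+1/\alpha_{\min}$. Your side remarks are valid refinements that the paper leaves implicit: the restart case is covered by the same formula with $y^n=x^n$, and $\alpha_n\ge\alpha_{\min}$ at $n=0$ requires either $\alpha_0\ge\alpha_{\min}$ or an adjusted constant.
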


\begin{proof}
    From the optimality condition of the proximal subproblem \eqref{eq:algorithm2:prox}, and noting that $\xi^n = \nabla g_2(x^n)$, we have
    \begin{equation}
        \eta^n := \nabla g_2(x^n) - \nabla f(u^n) - \frac{1}{\alpha_n}(u^n - y^n) \in \partial g_1(u^n).
    \end{equation}
    The subdifferential of the energy function evaluated at the proximal point $u^n$ is given by $\partial E(u^n) = \nabla f(u^n) + \partial g_1(u^n) - \nabla g_2(u^n)$. By selecting the specific subgradient $\eta^n \in \partial g_1(u^n)$, we construct $w^n \in \partial E(u^n)$ as
    \begin{align*}
        w^n &= \nabla f(u^n) + \eta^n - \nabla g_2(u^n) \nonumber \\
            &= \nabla g_2(x^n) - \nabla g_2(u^n) - \frac{1}{\alpha_n}(u^n - y^n),
    \end{align*}
    where the implicit gradient $\nabla f(u^n)$ elegantly cancels out. Utilizing the extrapolation formula $y^n = x^n + \beta_n(x^n - x^{n-1})$, we bound the deviation as
    \begin{equation}
        \|u^n - y^n\| \le \|u^n - x^n\| + \beta_n\|x^n - x^{n-1}\|.
    \end{equation}
    Taking the norm of $w^n$, together with the $L_{g_2}$-smoothness of $g_2$, the step size bound $\alpha_n \ge \alpha_{\min}$, and the extrapolation upper bound $\beta_n <1$, we obtain
    \begin{align*}
        \|w^n\| &\le \|\nabla g_2(x^n) - \nabla g_2(u^n)\| + \frac{1}{\alpha_n}\|u^n - y^n\| \nonumber \\
                &\le L_{g_2}\|u^n - x^n\| + \frac{1}{\alpha_{\min}}\left( \|u^n - x^n\| + \|x^n - x^{n-1}\| \right) \nonumber \\
                &= \left( L_{g_2} + \frac{1}{\alpha_{\min}} \right) \|u^n - x^n\| + \frac{1}{\alpha_{\min}} \|x^n - x^{n-1}\|.
    \end{align*}
    Setting the constant $C=L_{g_2}+\frac{1}{\alpha_{\min}}>0$, we conclude
    \begin{equation}
      \dist(\textbf{0}, \partial E(u^n)) \le C \left( \|u^n - x^n\| + \|x^n - x^{n-1}\| \right),
    \end{equation}
    which completes the proof.
\end{proof}

Equipped with Lemma \ref{lem:sufficient_descent} and Lemma \ref{lem:subgrad_bound_alg2_smooth_g2}, the theoretical framework for addressing the non-smooth $g_1$ component is now complete. By synthesizing the proxy energy gap analysis established in Theorem \ref{thm:global_smooth_g2} with the KL algebraic decoupling techniques demonstrated in Theorem \ref{thm:global_alg2}, the global convergence of pABBDCA$_{\text{er}}$ follows naturally. 

\begin{theorem}\label{thm:global_alg2_smooth_g2}
    Let $\{x^n\}_n$ be the bounded sequence generated by Algorithm \ref{alg:prox_bb_dca_ext_res}. Suppose $E(x)$ is a KL function with exponent $\theta \in [0, 1)$. Then, the sequence $\{x^n\}_n$ globally converges to a critical point $x^*$ with a finite trajectory length, i.e.,
    \begin{equation}
        \sum_{n=0}^{\infty} \|x^{n+1} - x^n\| < \infty.
    \end{equation}
\end{theorem}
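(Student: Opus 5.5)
The plan is to merge the proxy energy-gap argument of Theorem \ref{thm:global_smooth_g2} with the decoupling trick of Theorem \ref{thm:global_alg2}. Set $\Delta_n = E(x^n)-\zeta$ and $r_n = \|u^n-x^n\|$. Lemma \ref{lem:sufficient_descent} and Lemma \ref{lem:step_lower_bound} give, for $n\ge N_0$, the descent $\Delta_n-\Delta_{n+1}\ge M r_n^2$ with $M=\eta\lambda_{\min}/2$. They also give $\Delta_n\downarrow 0$ and $r_n\to0$. Since $x^{n+1}-x^n=\lambda_n(u^n-x^n)$ still holds for Algorithm \ref{alg:prox_bb_dca_ext_res}, we also have $\|x^n-x^{n-1}\|\le \lambda_0 r_{n-1}$.

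Next I check that Lemma \ref{lem:energy_relation_smooth_g2} carries over verbatim. Its proof used only the convexity of $f+g_1$, the $L_{g_2}$-smoothness of $g_2$, and the convex-combination form of $x^{n+1}$, and none of these involves $y^n$. Combining it with the descent inequality reproduces \eqref{eq:Eun_lower_bound}:
\[
E(u^n)-\zeta\ge \Delta_{n+1}-\tilde K(\Delta_n-\Delta_{n+1}).
\]
For the KL step, Lemma \ref{lem:subgrad_bound_alg2_smooth_g2} gives $\dist(\mathbf 0,\partial E(u^n))\le C(r_n+\lambda_0 r_{n-1})$. The KL inequality at $u^n$, which is legitimate because $u^n\to x^*$ along the relevant subsequence since $r_n\to0$, then yields
\[
C'(r_n+r_{n-1})\ge (E(u^n)-\zeta)^{\theta}\quad\text{whenever } E(u^n)>\zeta .
\]

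I then split into the same two cases as before. In Case 2, $\tilde K(\Delta_n-\Delta_{n+1})>\tfrac12\Delta_{n+1}$, we get linear decrease $\Delta_{n+1}\le q_2\Delta_n$, exactly as in \eqref{eq:case2_descent}. In Case 1, $E(u^n)-\zeta\ge\tfrac12\Delta_{n+1}$, so
\[
\Delta_{n+1}^{\theta}\le \hat C\,(r_n+r_{n-1}) \le \hat C M^{-1/2}\big(\sqrt{\Delta_n-\Delta_{n+1}}+\sqrt{\Delta_{n-1}-\Delta_n}\big).
\]
This is the main obstacle: the KL bound now involves two consecutive decrements, so the single-step recursions of Theorem \ref{thm:global_smooth_g2} no longer close directly. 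I would handle it in one of two ways.

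\begin{itemize}
\item For $\theta\in(0,1/2]$, I would work with the two-step quantity. In Case 1, squaring gives $\Delta_{n+1}\le c(\Delta_{n-1}-\Delta_{n+1})$ for large $n$, since $\Delta^{2\theta}\ge\Delta$. Hence $\Delta_{n+1}\le \frac{c}{1+c}\Delta_{n-1}$. Case 2 also gives $\Delta_{n+1}\le q_2\Delta_n\le q_2\Delta_{n-1}$. So $\Delta_{n+1}\le q\Delta_{n-1}$ in either case, which is R-linear convergence of $\Delta_n$. Then $\sum\sqrt{\Delta_n}<\infty$, and so $\sum r_n\le M^{-1/2}\sum\sqrt{\Delta_n}<\infty$.
\item For $\theta\in(1/2,1)$, I would use the concavity estimate \eqref{eq:concave_ineq}, now with $\Delta_n\le\mu\Delta_{n+1}$ in Case 1. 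This gives
\[
\Delta_n-\Delta_{n+1}\le c\,\phi_n\,(r_n+r_{n-1}),\qquad \phi_n=\Delta_n^{1-\theta}-\Delta_{n+1}^{1-\theta},
\]
so $M r_n^2\le c\phi_n(r_n+r_{n-1})$. The AM--GM splitting of Theorem \ref{thm:global_alg2} then gives
\[
r_n\le K\phi_n+\tfrac12 r_{n-1}-\tfrac14 r_n \quad\text{(up to constants)},
\]
which telescopes. In Case 2, the bound $\sqrt{\Delta_n-\Delta_{n+1}}\le C_2\phi_n$ from Theorem \ref{thm:global_smooth_g2} gives $r_n\le C\phi_n$ directly, and this fits the same summable template.
\end{itemize}

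For $\theta=0$, the argument of Theorem \ref{thm:global_smooth_g2}(i) applies unchanged, because $\dist(\mathbf 0,\partial E(u^n))\to0$ still holds. This forces $E(u^n)\le\zeta$ eventually, and the energy relation then gives $\Delta_{n+1}\le q\Delta_n$.

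In every case $\sum_n r_n<\infty$. Since $\|x^{n+1}-x^n\|\le\lambda_0 r_n$, this is the finite length. The sequence $\{x^n\}_n$ is therefore Cauchy and converges to some $x^*$. Finally, $x^*$ is critical by passing to the limit in $w^n\in\partial E(u^n)$ with $w^n\to0$ and $u^n\to x^*$, using the closedness of $\partial g_1$. Lemma \ref{lem:subgrad_bound_alg2_smooth_g2} supplies $w^n\to0$, and $E(u^n)\to\zeta$ follows from lower semicontinuity together with the energy relation.
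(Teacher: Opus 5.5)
Your proposal is correct. Its skeleton is the paper's: the energy relation of Lemma~\ref{lem:energy_relation_smooth_g2} gives the lower bound on $E(u^n)-\zeta$, and the KL inequality is applied at $u^n$ together with Lemma~\ref{lem:subgrad_bound_alg2_smooth_g2}. The analysis splits into the same two cases, and for $\theta\in(0,1/2]$ you arrive at the paper's two-step contraction $\Delta_{n+1}\le q\Delta_{n-1}$.

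The real difference is how the lagged term $\|x^n-x^{n-1}\|$ is absorbed. The paper uses monotonicity to enlarge the one-step gap in the energy lower bound to $\Delta_{n-1}-\Delta_{n+1}$. Then the KL estimate $(E(u^n)-\zeta)^{2\theta}\le C_1(\Delta_{n-1}-\Delta_{n+1})$ and the case split are written in terms of the same two-step gap. Each range of $\theta$ becomes a scalar recursion in $\Delta_n$, copied from Theorem~\ref{thm:global_smooth_g2} with the index step doubled. For $\theta\in(1/2,1)$ it closes with $\sqrt{\Delta_{n-1}-\Delta_{n+1}}\le C_3(\Delta_{n-1}^{1-\theta}-\Delta_{n+1}^{1-\theta})$ and a two-step telescoping sum.

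You keep the one-step case split, so Case~2 and the concavity estimate are literally those of Theorem~\ref{thm:global_smooth_g2}. For $\theta\in(1/2,1)$ you move the lag onto the step lengths, $Mr_n^2\le c\,\phi_n(r_n+r_{n-1})$, and telescope with the AM--GM device of Theorem~\ref{thm:global_alg2}. Case~2 fits this template because $r_n\le C\phi_n$ already gives $\tfrac34 r_n\le C\phi_n+\tfrac14 r_{n-1}$.

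The paper's route is more uniform: one argument, entirely at the level of energy gaps. Yours recombines the two earlier proofs more literally, at the cost of using two different templates depending on $\theta$. For $\theta=0$, your argument (eventually $E(u^n)\le\zeta$, hence $\Delta_{n+1}\le q\Delta_n$ by the energy relation) is the one actually needed. The paper's text there asserts finite termination, which does not follow from $E(u^n)\le\zeta$.

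Two justifications need repair, though the paper also leaves them implicit.
\begin{itemize}
\item Lower semicontinuity and the energy relation only bound $E(u^n)$ from below. The upper bound that keeps $u^n$ inside the KL region $\{E<\zeta+\nu\}$ comes from comparing the subproblem objective at $u^n$ and at $x^n$. That comparison gives $E(u^n)\le E(x^n)+\frac{\beta_n^2}{2\alpha_n}\|x^n-x^{n-1}\|^2$.
\item Applying the KL inequality at every large $u^n$ requires the uniformized KL property on the compact set of cluster points of $\{x^n\}_n$, on which $E\equiv\zeta$. Convergence of $u^n$ along a subsequence is not enough.
\end{itemize}
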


\begin{proof}
    From Lemma \ref{lem:sufficient_descent}, we know that for $n \ge N_0$, $E(x^n)$ monotonically decreases to a limit $\zeta = E(x^*)$, and $\lim_{n \to \infty} \|u^n - x^n\| = 0$. Moreover, there exists a constant $M = \frac{\eta\lambda_{\min}}{2} > 0$ such that
    \begin{equation}\label{eq:alg2_suff_descent}
        E(x^n) - E(x^{n+1}) \ge M \|u^n - x^n\|^2.
    \end{equation}
    Let $\Delta_n = E(x^n) - \zeta \ge 0$. Since $E$ is a KL function with exponent $\theta \in [0, 1)$, there exists a constant $c>0$ such that for any sufficiently large $n$ satisfying $E(u^n) > \zeta$, the KL inequality with the concave function $\psi(s) = c s^{1-\theta}$ holds:
    \begin{equation*}
        c(1-\theta)(E(u^n) - \zeta)^{-\theta} \cdot \text{dist}(\textbf{0}, \partial E(u^n)) \ge 1.
    \end{equation*}
    
    Substituting the subgradient bound from Lemma \ref{lem:subgrad_bound_alg2_smooth_g2} into the KL inequality and squaring both sides, we use $(a+b)^2 \le 2(a^2+b^2)$ to obtain
    \begin{equation}\label{eq:alg2_kl_squared}
        (E(u^n) - \zeta)^{2\theta} \le C^2 c^2 (1-\theta)^2 \cdot 2\left( \|u^n - x^n\|^2 + \|x^n - x^{n-1}\|^2 \right).
    \end{equation}
    Notice that the line search update gives $x^n - x^{n-1} = \lambda_{n-1}(u^{n-1} - x^{n-1})$. Since $\lambda_{n-1} \le \lambda_0$, we can express the squared distances entirely in terms of the energy gaps using \eqref{eq:alg2_suff_descent}:
    \begin{align*}
        \|u^n - x^n\|^2 &\le \frac{1}{M}(\Delta_n - \Delta_{n+1}), \\
        \|x^n - x^{n-1}\|^2 &\le \lambda_0^2 \|u^{n-1} - x^{n-1}\|^2 \le \frac{1}{M}(\Delta_{n-1} - \Delta_n).
    \end{align*}
    Substituting these relations into \eqref{eq:alg2_kl_squared} and defining $C_1 = \frac{2 C^2 c^2 (1-\theta)^2}{M}$, we arrive at
    \begin{equation}\label{eq:alg2_kl_gap}
        (E(u^n) - \zeta)^{2\theta} \le C_1 \left( (\Delta_n - \Delta_{n+1}) + (\Delta_{n-1} - \Delta_n) \right) = C_1 (\Delta_{n-1} - \Delta_{n+1}).
    \end{equation}

    Since the line search update $x^{n+1} = (1-\lambda_n)x^n + \lambda_n u^n$ remains identical to Algorithm \ref{alg:prox_bb_dca}, the convex combination energy relation established in Lemma \ref{lem:energy_relation_smooth_g2} holds inherently. Rearranging the terms, we have
    \begin{equation*}
        E(u^n) - \zeta \ge \Delta_{n+1} - \tilde{K}(\Delta_n - \Delta_{n+1}),
    \end{equation*}
    where $\tilde{K} = \frac{1-\lambda_{\min}}{\lambda_{\min}} + \frac{L_{g_2}}{2M} > 0$. Since $\Delta_n \le \Delta_{n-1}$ for all $n \ge N_0$, it follows that $\Delta_n - \Delta_{n+1} \le \Delta_{n-1} - \Delta_{n+1}$. Thus, we obtain 
    \begin{equation}\label{eq:alg2_Eun_lower}
        E(u^n) - \zeta \ge \Delta_{n+1} - \tilde{K}(\Delta_{n-1} - \Delta_{n+1}).
    \end{equation}

    Based on \eqref{eq:alg2_Eun_lower}, we branch the analysis similarly by evaluating the expanded interval $(\Delta_{n-1} - \Delta_{n+1})$:

    \textbf{Case 1:} $\tilde{K}(\Delta_{n-1} - \Delta_{n+1}) \le \frac{1}{2}\Delta_{n+1}$. 
    In this case, \eqref{eq:alg2_Eun_lower} implies $E(u^n) - \zeta \ge \frac{1}{2}\Delta_{n+1} > 0$. Substituting this into \eqref{eq:alg2_kl_gap}, we obtain:
    \begin{equation}\label{eq:alg2_case1_descent}
        \Delta_{n-1} - \Delta_{n+1} \ge \frac{1}{C_1} \left( \frac{1}{2}\Delta_{n+1} \right)^{2\theta} := \gamma \Delta_{n+1}^{2\theta}.
    \end{equation}

    \textbf{Case 2:} $\tilde{K}(\Delta_{n-1} - \Delta_{n+1}) > \frac{1}{2}\Delta_{n+1}$. 
    This condition directly yields a linear decrease across two steps:
    \begin{equation}\label{eq:alg2_case2_descent}
        \Delta_{n+1} < \frac{2\tilde{K}}{2\tilde{K}+1} \Delta_{n-1} := q_2 \Delta_{n-1}, \quad \text{where } q_2 < 1.
    \end{equation}

    We proceed to analyze the finite length property based on the KL exponent $\theta \in [0, 1)$. 

    (i) \emph{When $\theta = 0$}: Using identical contradiction arguments to Theorem \ref{thm:global_smooth_g2}, the sequence reaches $E(u^n) \le \zeta$ for all sufficiently large $n$. The algorithm thus terminates at a critical point in finite steps, and $\sum_{n=0}^{\infty} \|x^{n+1} - x^n\| < \infty$ trivially holds.

    (ii) \emph{When $\theta \in (0, 1/2]$}: If Case 1 occurs, since $2\theta \le 1$ and $\Delta_{n+1} \to 0$, \eqref{eq:alg2_case1_descent} gives $\Delta_{n-1} - \Delta_{n+1} \ge \gamma \Delta_{n+1}$, yielding $\Delta_{n+1} \le q_1 \Delta_{n-1}$ with $q_1 = \frac{1}{1+\gamma} < 1$. Combined with Case 2, we always have $\Delta_{n+1} \le q \Delta_{n-1}$ where $q = \max\{q_1, q_2\} \in (0, 1)$. This naturally guarantees $\sum_{n=N_0}^{\infty} \sqrt{\Delta_{n-1} - \Delta_{n+1}} < \infty$.

    (iii) \emph{When $\theta \in (1/2, 1)$}: Utilizing the exact same concavity inequality on $h(s) = cs^{1-\theta}$ as derived in Theorem \ref{thm:global_smooth_g2}, there exists a constant $C_3 > 0$ such that for all large $n$,
    \begin{equation*}
        \sqrt{\Delta_{n-1} - \Delta_{n+1}} \le C_3 (\Delta_{n-1}^{1-\theta} - \Delta_{n+1}^{1-\theta}).
    \end{equation*}
    Summing this  series from $n = N_0$ to $\infty$ yields $\sum_{n=N_0}^{\infty} \sqrt{\Delta_{n-1} - \Delta_{n+1}} \le C_3 \Delta_{N_0-1}^{1-\theta} < \infty$.

    Finally, across all $\theta \in [0, 1)$, we have established that $\sum_{n=N_0}^{\infty} \sqrt{\Delta_{n-1} - \Delta_{n+1}} < \infty$. Recalling $\Delta_n - \Delta_{n+1} \le \Delta_{n-1} - \Delta_{n+1}$ and applying the line search update $x^{n+1} - x^n = \lambda_n(u^n - x^n)$ along with \eqref{eq:alg2_suff_descent}, we conclude:
    \begin{align*}
        \sum_{n=N_0}^{\infty} \|x^{n+1} - x^n\| 
        &\le \lambda_0 \sum_{n=N_0}^{\infty} \|u^n - x^n\| 
        \le \frac{\lambda_0}{\sqrt{M}} \sum_{n=N_0}^{\infty} \sqrt{\Delta_n - \Delta_{n+1}} \\
        &\le \frac{\lambda_0}{\sqrt{M}} \sum_{n=N_0}^{\infty} \sqrt{\Delta_{n-1} - \Delta_{n+1}} < +\infty.
    \end{align*}
    Incorporating the finite initial steps, the entire sequence achieves a finite trajectory length $\sum_{n=0}^{\infty} \|x^{n+1} - x^n\| < \infty$, completing the proof.
\end{proof}

\section{Convergence Analysis of pABBDCA$_{\text{se}}$}\label{sec:alg3_conv}

To address the non-monotonicity introduced by the extrapolation, we must analyze the algorithm within a modified framework. We first establish a quasi-descent lemma regarding the original energy function $E(x)$, and subsequently introduce a Lyapunov function to restore strict monotonic descent, which is essential for the Kurdyka-\L{}ojasiewicz (KL) convergence analysis.

\begin{lemma}\label{lem:quasi_descent}
Let $u^n$ be the optimal solution of the proximal subproblem and $d^n = u^n - x^n$ be the search direction. For any accepted step size $\lambda_n \in (0, 1]$, the energy function satisfies
\begin{equation}\label{eq:E_sub}
    E(x^{n+1}) \le E(x^n) - \frac{2-\beta_n}{2\alpha_n\lambda_n}\|x^{n+1} - x^n\|^2 + \frac{\lambda_n\beta_n}{2\alpha_n}\|x^n - x^{n-1}\|^2.
\end{equation}
\end{lemma}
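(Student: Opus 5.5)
The plan is to mimic the proof of Lemma \ref{lemma:sufficient_descent}, with two changes. First, $f$ now sits inside the subproblem of Algorithm \ref{alg:prox_bb_dca_ext_ada}, so I will treat $F := f + g_1$ as a single convex block and never need the $L$-smoothness of $f$. Second, the proximal center is $y^n$ rather than $x^n$, which produces a cross term; I will absorb that cross term with Young's inequality. Throughout, write $d^n = u^n - x^n$ and $p^n = x^n - x^{n-1}$, so that $y^n - x^n = \beta_n p^n$.

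\textbf{Step 1 (strong convexity of the subproblem).} Let
\[
Q_n(x) = \langle -\xi^n, x - y^n\rangle + \frac{1}{2\alpha_n}\|x-y^n\|^2 + F(x).
\]
This function is strongly convex with modulus at least $1/\alpha_n$, and $u^n$ is its minimizer. Hence
\[
Q_n(x^n) \ge Q_n(u^n) + \frac{1}{2\alpha_n}\|x^n - u^n\|^2 .
\]
Expanding both sides gives
\[
F(u^n) - F(x^n) \le \langle \xi^n, d^n\rangle + \frac{1}{2\alpha_n}\left(\|x^n - y^n\|^2 - \|u^n - y^n\|^2 - \|d^n\|^2\right).
\]
Now substitute $\|x^n-y^n\|^2 = \beta_n^2\|p^n\|^2$ and $\|u^n-y^n\|^2 = \|d^n - \beta_n p^n\|^2 = \|d^n\|^2 - 2\beta_n\langle d^n,p^n\rangle + \beta_n^2\|p^n\|^2$. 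The $\beta_n^2$ terms cancel, and I expect to obtain
\[
F(u^n) - F(x^n) \le \langle \xi^n, d^n\rangle - \frac{1}{\alpha_n}\|d^n\|^2 + \frac{\beta_n}{\alpha_n}\langle d^n, p^n\rangle .
\]

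\textbf{Step 2 (pass to $x^{n+1}$).} Since $x^{n+1} = (1-\lambda_n)x^n + \lambda_n u^n$, convexity of $F$ gives $F(x^{n+1}) - F(x^n) \le \lambda_n\big(F(u^n) - F(x^n)\big)$. The subgradient inequality for $g_2$ at $x^n$ with $\xi^n$ gives $-g_2(x^{n+1}) \le -g_2(x^n) - \lambda_n\langle \xi^n, d^n\rangle$. Adding these two inequalities and inserting Step 1, the $\langle\xi^n,d^n\rangle$ terms cancel, leaving
\[
E(x^{n+1}) - E(x^n) \le -\frac{\lambda_n}{\alpha_n}\|d^n\|^2 + \frac{\lambda_n\beta_n}{\alpha_n}\langle d^n, p^n\rangle .
\]

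\textbf{Step 3 (Young and rescaling).} Apply $\langle d^n,p^n\rangle \le \tfrac12\|d^n\|^2 + \tfrac12\|p^n\|^2$ with weight exactly one. The right-hand side becomes
\[
-\frac{\lambda_n(2-\beta_n)}{2\alpha_n}\|d^n\|^2 + \frac{\lambda_n\beta_n}{2\alpha_n}\|p^n\|^2 .
\]
Finally, use $\|x^{n+1}-x^n\| = \lambda_n\|d^n\|$, so that $\lambda_n\|d^n\|^2 = \|x^{n+1}-x^n\|^2/\lambda_n$. This yields exactly the claimed inequality \eqref{eq:E_sub}.

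The main obstacle is Step 1: expanding the quadratic terms correctly around $y^n$, so that the $\beta_n^2\|p^n\|^2$ contributions cancel and only the cross term $\frac{\beta_n}{\alpha_n}\langle d^n,p^n\rangle$ survives. Once that is done, choosing Young's inequality with equal weights is what produces the specific constants $(2-\beta_n)/2$ and $\beta_n/2$.
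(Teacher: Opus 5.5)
Your proposal is correct and follows essentially the same route as the paper. Both use the strong-convexity inequality for the subproblem with $F=f+g_1$, expand $\|u^n-y^n\|^2$ so that only the cross term $\frac{\beta_n}{\alpha_n}\langle d^n, x^n-x^{n-1}\rangle$ survives, cancel $\langle \xi^n,d^n\rangle$ via convexity of $F$ and the subgradient inequality for $g_2$, and finish with equal-weight Young's inequality and $\|d^n\|^2=\|x^{n+1}-x^n\|^2/\lambda_n^2$. The Young step implicitly needs $\beta_n\ge 0$ (the paper's proof leaves this implicit as well), and the $\max\{0,\cdot\}$ in the update rule guarantees it.
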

\begin{proof}
Let $F(x) = f(x) + g_1(x)$. With the strong convexity of the subproblem's objective $R_n(x):=\langle - \xi^n, x - y^n \rangle + \frac{1}{2\alpha_n}\|x - y^n\|^2 + f(x)+ g_1(x)$ with modulus $\frac{1}{\alpha_n}$, we have $R_n(x^n) \ge R_n(u^n) + \frac{1}{2\alpha_n}\|x^n - u^n\|^2$. Expanding this yields
\begin{equation*}
    F(u^n) - F(x^n) \le \langle \xi^n, u^n - x^n \rangle + \frac{1}{2\alpha_n} \left( \|x^n - y^n\|^2 - \|u^n - y^n\|^2 - \|u^n - x^n\|^2 \right).
\end{equation*}
By expanding $\|u^n - y^n\|^2 = \|(u^n - x^n) + (x^n - y^n)\|^2$ and recalling $x^n - y^n = -\beta_n(x^n - x^{n-1})$, we derive
\begin{equation}\label{eq:F_bound}
    F(u^n) - F(x^n) \le \langle \xi^n, d^n \rangle - \frac{1}{\alpha_n}\|d^n\|^2 + \frac{\beta_n}{\alpha_n}\langle d^n, x^n - x^{n-1} \rangle.
\end{equation}
For any $\lambda_n \in (0, 1]$, the convexity of $F(x)$ gives $F(x^n + \lambda_n d^n) \le F(x^n) + \lambda_n(F(u^n) - F(x^n))$. For the concave component $-g_2$, the subgradient inequality yields $-g_2(x^n + \lambda_n d^n) \le -g_2(x^n) - \lambda_n \langle \xi^n, d^n \rangle$. Summing these two bounds cancels the $\langle \xi^n, d^n \rangle$ term
\begin{equation*}
    E(x^n + \lambda_n d^n) \le E(x^n) - \frac{\lambda_n}{\alpha_n}\|d^n\|^2 + \frac{\lambda_n\beta_n}{\alpha_n}\langle d^n, x^n - x^{n-1} \rangle.
\end{equation*}
Applying the Cauchy-Schwarz and Young's inequality $\langle d^n, x^n - x^{n-1} \rangle \le \frac{1}{2}\|d^n\|^2 + \frac{1}{2}\|x^n - x^{n-1}\|^2$, and substituting $\|d^n\|^2 = \frac{1}{\lambda_n^2}\|x^{n+1} - x^n\|^2$, we arrive at the inequality \eqref{eq:E_sub}.
\end{proof}

To overcome the positive error term in \eqref{eq:E_sub}, we introduce a Lyapunov sequence. The restriction of $\beta_n$ in Algorithm \ref{alg:prox_bb_dca_ext_ada} is precisely designed to ensure its strict descent.

\begin{lemma}\label{lem:lyapunov_descent}
Suppose the sequence $\{x^n\}_n$ is generated by Algorithm \ref{alg:prox_bb_dca_ext_ada}. Define
\begin{equation}\label{eq:lyapunov}
    \mathcal{L}_n(x^n, x^{n-1}) = E(x^n) + \frac{\lambda_n\beta_n}{2\alpha_n}\|x^n - x^{n-1}\|^2.
\end{equation}
as the Lyapunov function. Then, the sequence $\{\mathcal{L}_n(x^n, x^{n-1})\}_n$ is monotonically decreasing and
\begin{equation}\label{eq:sum_finite}
    \sum_{n=1}^{\infty} \|x^{n+1} - x^n\|^2 < \infty.
\end{equation}
\end{lemma}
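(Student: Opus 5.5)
The plan is to combine the quasi-descent inequality \eqref{eq:E_sub} of Lemma \ref{lem:quasi_descent} with the safeguard \eqref{betan} on $\beta_{n+1}$. The goal is a one-step estimate
\[
\mathcal{L}_{n+1}(x^{n+1},x^n) \le \mathcal{L}_n(x^n,x^{n-1}) - \kappa \|x^{n+1}-x^n\|^2
\]
with a constant $\kappa>0$ independent of $n$. Monotonicity then follows immediately, and \eqref{eq:sum_finite} follows by telescoping. Note that Lemma \ref{lem:quasi_descent} holds for every $\lambda_n\in(0,1]$, not only the accepted one, so the nonmonotone term $\nu_n$ plays no role here.

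\textbf{Step 1 (rewriting \eqref{eq:E_sub}).} Add $\frac{\lambda_{n+1}\beta_{n+1}}{2\alpha_{n+1}}\|x^{n+1}-x^n\|^2$ to both sides of \eqref{eq:E_sub}. The term $\frac{\lambda_n\beta_n}{2\alpha_n}\|x^n-x^{n-1}\|^2$ on the right is exactly the extra term in $\mathcal{L}_n$. This gives
\[
\mathcal{L}_{n+1} \le \mathcal{L}_n - \Big(\frac{2-\beta_n}{2\alpha_n\lambda_n} - \frac{\lambda_{n+1}\beta_{n+1}}{2\alpha_{n+1}}\Big)\|x^{n+1}-x^n\|^2 .
\]
Everything then reduces to bounding the bracket from below uniformly.

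\textbf{Step 2 (using the safeguard).} I would split on the two branches of the outer $\max$ in \eqref{betan} at index $n+1$.

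If $\beta_{n+1}>0$, then $\beta_{n+1} \le \frac{\alpha_{n+1}}{\alpha_n\lambda_n}(2-\beta_n) - 2c_0\alpha_{n+1}$, which is equivalent to
\[
\frac{\beta_{n+1}}{2\alpha_{n+1}} \le \frac{2-\beta_n}{2\alpha_n\lambda_n} - c_0 .
\]
The step size $\lambda_{n+1}$ is not yet known when $\beta_{n+1}$ is chosen, so I would only use $\lambda_{n+1}\le\lambda_0=1$. This gives $\frac{\lambda_{n+1}\beta_{n+1}}{2\alpha_{n+1}} \le \frac{\beta_{n+1}}{2\alpha_{n+1}}$, and hence the bracket is at least $c_0$.

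If $\beta_{n+1}=0$, the bracket equals $\frac{2-\beta_n}{2\alpha_n\lambda_n}$. Since $\beta_n\le\overline\beta<1$, $\lambda_n\le 1$ and $\alpha_n\le\alpha_{\max}$, this is at least $\frac{1}{2\alpha_{\max}}$.

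Therefore $\kappa:=\min\{c_0,\frac{1}{2\alpha_{\max}}\}>0$ works for all $n\ge 0$. The base index needs a separate check, because $\beta_0=0$ is set directly rather than through \eqref{betan}. It is covered by the same computation, since \eqref{betan} is applied from $n\ge1$ onward, which is exactly the index $n+1$ used above.

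\textbf{Step 3 (summability).} Because $\beta_n\ge0$, we have $\mathcal{L}_n \ge E(x^n) \ge \inf E > -\infty$. Summing the one-step inequality from $n=1$ to $N$ gives
\[
\kappa\sum_{n=1}^N\|x^{n+1}-x^n\|^2 \le \mathcal{L}_1 - \inf E ,
\]
and letting $N\to\infty$ yields \eqref{eq:sum_finite}.

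The main obstacle is the index bookkeeping in Step 2. One must check that the quantities entering the bracket — $\alpha_n$, $\lambda_n$, $\beta_n$ and $\alpha_{n+1}$ — are exactly those available when \eqref{betan} is evaluated at step $n+1$, and that the unknown $\lambda_{n+1}$ is handled only through the bound $\lambda_{n+1}\le 1$. Once this alignment is confirmed, the rest is routine.
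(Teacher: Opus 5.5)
Your proposal is correct and follows the paper's proof essentially line by line. You add $\frac{\lambda_{n+1}\beta_{n+1}}{2\alpha_{n+1}}\|x^{n+1}-x^n\|^2$ to \eqref{eq:E_sub}, bound the resulting bracket below by $c_0$ via the safeguard \eqref{betan} (using only $\lambda_{n+1}\le 1$), or by a multiple of $1/\alpha_{\max}$ when $\beta_{n+1}=0$, and then telescope using $\mathcal{L}_n\ge E(x^n)\ge\inf E$. The differences are cosmetic: you split on $\beta_{n+1}>0$ versus $\beta_{n+1}=0$ rather than on the sign of the safeguard bound, and you use the slightly smaller constant $\min\{c_0,\frac{1}{2\alpha_{\max}}\}$ instead of $\min\{c_0,\frac{2-\overline{\beta}}{2\alpha_{\max}}\}$.
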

\begin{proof}
With upper bound of $E(x^{n+1})$ from \eqref{eq:E_sub}, we obtain
\begin{align*}
    \mathcal{L}_{n+1}(x^{n+1}, x^n) &\le E(x^n) - \frac{2-\beta_n}{2\alpha_n\lambda_n}\|x^{n+1} - x^n\|^2 + \frac{\lambda_n\beta_n}{2\alpha_n}\|x^n - x^{n-1}\|^2 + \frac{\lambda_{n+1}\beta_{n+1}}{2\alpha_{n+1}}\|x^{n+1} - x^n\|^2 \\
    &= \mathcal{L}_n(x^n, x^{n-1}) - \left( \frac{2-\beta_n}{2\alpha_n\lambda_n} - \frac{\lambda_{n+1}\beta_{n+1}}{2\alpha_{n+1}} \right) \|x^{n+1} - x^n\|^2.
\end{align*}
According to the update rule \eqref{betan} in Algorithm \ref{alg:prox_bb_dca_ext_ada}, we consider two cases for $\beta_{n+1}$:

\textbf{Case 1:} If $\frac{\alpha_{n+1}}{\alpha_n\lambda_n}(2-\beta_n) - 2c_0\alpha_{n+1} \ge 0$, the update rule yields
\begin{equation*}
    \beta_{n+1} \le \frac{\alpha_{n+1}}{\alpha_n\lambda_n}(2-\beta_n) - 2c_0\alpha_{n+1} \implies \frac{2-\beta_n}{2\alpha_n\lambda_n} - \frac{\lambda_{n+1}\beta_{n+1}}{2\alpha_{n+1}} \ge \frac{2-\beta_n}{2\alpha_n\lambda_n} - \frac{\beta_{n+1}}{2\alpha_{n+1}} \ge c_0.
\end{equation*}

\textbf{Case 2:} If $\frac{\alpha_{n+1}}{\alpha_n\lambda_n}(2-\beta_n) - 2c_0\alpha_{n+1} < 0$, the truncation operation $\max\{0, \cdot\}$ forces $\beta_{n+1} = 0$. In this case, the negative term vanishes ($\frac{\lambda_{n+1}\beta_{n+1}}{2\alpha_{n+1}} = 0$), which brings
\begin{equation*}
    \frac{2-\beta_n}{2\alpha_n\lambda_n} - \frac{\lambda_{n+1}\beta_{n+1}}{2\alpha_{n+1}}=\frac{2-\beta_n}{2\alpha_n\lambda_n} - 0 \ge \frac{1}{2\alpha_{\max}}.
\end{equation*}

Combining both cases, and defining a global constant $\tilde{c} = \min\left\{c_0, \frac{2-\overline{\beta}}{2\alpha_{\max}}\right\} > 0$, we obtain the monotonic descent of the Lyapunov function as
\begin{equation}\label{eq:L_strict_descent}
    \mathcal{L}_{n+1}(x^{n+1}, x^n) \le \mathcal{L}_n(x^n, x^{n-1}) - \tilde{c} \|x^{n+1} - x^n\|^2.
\end{equation}
Since $E(x)$ is bounded from below and $\beta_n \ge 0$, $\mathcal{L}_n(x^n, x^{n-1})$ is also bounded from below. Summing \eqref{eq:L_strict_descent} from $n=1$ to $N$ and letting $N \to \infty$ directly yields \eqref{eq:sum_finite}. 
\end{proof}

\subsection{Global convergence under Assumption \ref{assum:smooth_g1}}
Now we establish a relative subgradient bound for the Lyapunov function evaluated at the augmented sequence $z^n = (x^n, x^{n-1})$.

\begin{lemma}\label{lem:subgrad_alg3_smooth_g1}
    Let $\{x^n\}_n$ be the sequence generated by Algorithm \ref{alg:prox_bb_dca_ext_ada}. Then, there exists a constant $C > 0$ such that for all $n \ge 1$:
    \begin{equation}
        {\dist}(\emph{\textbf{0}}, \partial \mathcal{L}_n(x^n, x^{n-1})) \le C \left( \|x^{n+1} - x^n\| + \|x^n - x^{n-1}\| \right).
    \end{equation}
\end{lemma}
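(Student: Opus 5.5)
We treat $\mathcal{L}_n$ as a function of the pair $(x,w)=(x^n,x^{n-1})$ with frozen coefficient $\kappa_n:=\frac{\lambda_n\beta_n}{2\alpha_n}$, namely $\mathcal{L}_n(x,w)=E(x)+\kappa_n\|x-w\|^2$. Under Assumption \ref{assum:smooth_g1}, $f$ and $g_1$ are smooth, so the only nonsmooth part is $-g_2$. Its (limiting) subdifferential at $x^n$ contains $-\xi^n$ for $\xi^n\in\partial g_2(x^n)$; we use exactly the $\xi^n$ chosen by the algorithm. This gives the explicit element
\begin{equation*}
\left(\nabla f(x^n)+\nabla g_1(x^n)-\xi^n+2\kappa_n(x^n-x^{n-1}),\; -2\kappa_n(x^n-x^{n-1})\right)\in\partial\mathcal{L}_n(x^n,x^{n-1}),
\end{equation*}
and the plan is to bound the norm of this vector.

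The second component is immediate. Since $\lambda_n\le1$, $\beta_n<1$ and $\alpha_n\ge\alpha_{\min}$, we have $2\kappa_n\le 1/\alpha_{\min}$, so its norm is at most $\frac{1}{\alpha_{\min}}\|x^n-x^{n-1}\|$. The same bound handles the term $2\kappa_n(x^n-x^{n-1})$ in the first component.

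For the remaining part $\nabla f(x^n)+\nabla g_1(x^n)-\xi^n$ of the first component, we use the optimality condition of the subproblem in Algorithm \ref{alg:prox_bb_dca_ext_ada}:
\begin{equation*}
\xi^n=\nabla f(u^n)+\nabla g_1(u^n)+\tfrac{1}{\alpha_n}(u^n-y^n).
\end{equation*}
Substituting, this part equals
\begin{equation*}
[\nabla f(x^n)-\nabla f(u^n)]+[\nabla g_1(x^n)-\nabla g_1(u^n)]-\tfrac{1}{\alpha_n}(u^n-x^n)+\tfrac{\beta_n}{\alpha_n}(x^n-x^{n-1}).
\end{equation*}
Its norm is therefore at most $(L+L_{g_1}+\frac{1}{\alpha_{\min}})\|u^n-x^n\|+\frac{1}{\alpha_{\min}}\|x^n-x^{n-1}\|$, exactly as in the proof of Theorem \ref{thm:global_alg2}.

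The main obstacle is that the statement is phrased with $\|x^{n+1}-x^n\|$ rather than $\|u^n-x^n\|$. We convert using $x^{n+1}-x^n=\lambda_n d^n$, so $\|u^n-x^n\|=\lambda_n^{-1}\|x^{n+1}-x^n\|$. This requires a uniform lower bound $\lambda_n\ge\lambda_{\min}>0$ for Algorithm \ref{alg:prox_bb_dca_ext_ada}, which has not been stated yet. I would establish it by adapting Lemma \ref{lemma:sufficient_descent} to the extrapolated subproblem, which would mirror Lemma \ref{lem:quasi_descent}:
\begin{equation*}
E(x^n+\lambda d^n)\le E(x^n)-\tfrac{\lambda}{\alpha_n}\|d^n\|^2+\tfrac{\lambda\beta_n}{\alpha_n}\langle d^n,x^n-x^{n-1}\rangle.
\end{equation*}
Here $f$ sits inside the subproblem, so no $L\lambda^2$ term appears. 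The inner-product term is the difficulty: it is not controlled by $\|d^n\|^2$ alone.

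My first attempt would be to argue as in Lemma \ref{lem:step_lower_bound}: cap the backtracking count, or note that the nonmonotone slack $\nu_n$ and the estimate $\|x^n-x^{n-1}\|\to0$ suffice. Failing that, I would simply take $\lambda_n\ge\lambda_{\min}$ as a standing property of the line search, as the paper does implicitly.

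With $\lambda_{\min}$ available, the bound follows with
\begin{equation*}
C=\max\left\{\tfrac{1}{\lambda_{\min}}\left(L+L_{g_1}+\tfrac{1}{\alpha_{\min}}\right),\ \tfrac{3}{\alpha_{\min}}\right\}.
\end{equation*}
A subtlety worth noting is that $\kappa_n$ changes with $n$, so $\partial\mathcal{L}_n$ must be read as the subdifferential of the $n$-th function at $z^n$. This is consistent with how the KL argument will use it.
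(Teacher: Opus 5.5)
Your computation matches the paper's proof. You use the same element of $\partial\mathcal{L}_n(x^n,x^{n-1})$ built from the algorithm's own $\xi^n$, the same substitution of the optimality condition, and the same constant: with $\lambda_0=1$ the paper's $\frac{1+2\lambda_0}{\alpha_{\min}}$ is your $\frac{3}{\alpha_{\min}}$. The gap is the step you flagged and then left open. The paper also divides by $\lambda_{\min}$ here, but no lower bound on $\lambda_n$ is proved anywhere for Algorithm~\ref{alg:prox_bb_dca_ext_ada}, and neither of your remedies supplies one. First, Algorithm~\ref{alg:prox_bb_dca_ext_ada} has no backtracking cap or restart, so the argument of Lemma~\ref{lem:step_lower_bound} does not apply. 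Second, when $(\frac{1}{\alpha_n}-\eta)\|d^n\|^2-\frac{\beta_n}{\alpha_n}\langle d^n,x^n-x^{n-1}\rangle<0$, the slack $\nu_n$ only certifies acceptance for $\lambda$ of order $\frac{\omega\alpha_n\|d^n\|}{(n+1)\beta_n\|x^n-x^{n-1}\|}$. This has no positive lower bound: $\nu_n/\|d^n\|^2\to0$, and what matters is the ratio $\|x^n-x^{n-1}\|/\|d^n\|$, not whether $\|x^n-x^{n-1}\|\to0$. Taking $\lambda_n\ge\lambda_{\min}$ as a standing property is therefore an unproved assumption. A smaller point: drop the word ``limiting''. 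For nonsmooth $g_2$ the limiting subdifferential of $-g_2$ need not contain $-\xi^n$; for $-|t|$ at $t=0$ it is $\{-1,1\}$. Your vector is a valid element only under the paper's convention $\partial E=\nabla f+\nabla g_1-\partial g_2$, i.e.\ with Clarke subgradients.

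You can avoid $\lambda_{\min}$ entirely by using the rejected trial step. Let $F=f+g_1$ and $\Phi_n:=F(u^n)-F(x^n)-\langle\xi^n,d^n\rangle$. As in the proof of Lemma~\ref{lem:quasi_descent}, convexity of $F$ and $\xi^n\in\partial g_2(x^n)$ give $E(x^n+\lambda d^n)\le E(x^n)+\lambda\Phi_n$ for every $\lambda\in(0,1]$. Also, \eqref{eq:F_bound} gives $\Phi_n\le-\frac{1}{\alpha_n}\|d^n\|^2+\frac{\beta_n}{\alpha_n}\|d^n\|\,\|x^n-x^{n-1}\|$. If $\lambda_n<1$, the trial $\lambda'=\lambda_n/\rho\in(0,1]$ was rejected, so $E(x^n)-\eta\lambda'\|d^n\|^2\le E(x^n)-\eta\lambda'\|d^n\|^2+\nu_n<E(x^n+\lambda'd^n)\le E(x^n)+\lambda'\Phi_n$, i.e.\ $\Phi_n>-\eta\|d^n\|^2$. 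Combined with the upper bound on $\Phi_n$ and $d^n\neq0$, this yields $(1-\eta\alpha_n)\|d^n\|<\beta_n\|x^n-x^{n-1}\|$. We have $\eta\alpha_n\le\eta\alpha_{\max}<1$, $\beta_n\le\overline{\beta}$, and $\|d^n\|=\|x^{n+1}-x^n\|$ when $\lambda_n=1$. Hence in every case
\[ \|u^n-x^n\|\le\|x^{n+1}-x^n\|+\frac{\overline{\beta}}{1-\eta\alpha_{\max}}\|x^n-x^{n-1}\|. \]
Insert this into your estimate $\dist(\mathbf{0},\partial\mathcal{L}_n(x^n,x^{n-1}))\le(L+L_{g_1}+\frac{1}{\alpha_{\min}})\|u^n-x^n\|+\frac{3}{\alpha_{\min}}\|x^n-x^{n-1}\|$. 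This proves the lemma with $C=(L+L_{g_1}+\frac{1}{\alpha_{\min}})(1+\frac{\overline{\beta}}{1-\eta\alpha_{\max}})+\frac{3}{\alpha_{\min}}$, without any lower bound on the step size.
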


\begin{proof}
    Let $\delta_n = \frac{\lambda_n\beta_n}{2\alpha_n}$. With direct computation, we obtain the subdifferential of the Lyapunov function $\mathcal{L}_n(x, y) = E(x) + \delta_n\|x - y\|^2$ evaluated at $(x^n, x^{n-1})$ as
    \begin{align*}
        \partial \mathcal{L}_n(x,y) \big|_{(x,y) = (x^n, x^{n-1})} =
        \begin{pmatrix}
            \nabla f(x^n) + \nabla g_1(x^n) - \partial g_2(x^n) + 2\delta_n(x^n - x^{n-1}) \\
            -2\delta_n(x^n - x^{n-1})
        \end{pmatrix}.
    \end{align*}
    According to the first-order optimality condition of the proximal subproblem for $u^n$, we have
     \begin{align*}
        0 \in -\partial g_2(x^n) + \frac{1}{\alpha_n}(u^n - y^n) + \nabla f(u^n) + \nabla g_1(u^n).
    \end{align*}
    Thus, there exists an exact subgradient $\xi^n \in \partial g_2(x^n)$ such that $\xi^n = \nabla f(u^n) + \nabla g_1(u^n) + \frac{1}{\alpha_n}(u^n - y^n)$. 
    Then $\text{dist}(\textbf{0}, \partial \mathcal{L}_n(x^n, x^{n-1}))$ can be estimated by
    \begin{align*}
        &\text{dist}(\textbf{0}, \partial \mathcal{L}_n(x^n, x^{n-1})) \\
        \le& \left\| \begin{pmatrix}
            \nabla f(x^n) + \nabla g_1(x^n) - \xi^n + 2\delta_n(x^n - x^{n-1}) \\
            -2\delta_n(x^n - x^{n-1})
        \end{pmatrix} \right\| \\
        \le& \left\| (\nabla f(x^n) - \nabla f(u^n)) + (\nabla g_1(x^n) - \nabla g_1(u^n)) - \frac{1}{\alpha_n}(u^n - y^n) + \frac{\lambda_n\beta_n}{\alpha_n}(x^n - x^{n-1}) \right\| \\
        &+ \left\| \frac{\lambda_n\beta_n}{\alpha_n}(x^n - x^{n-1}) \right\|\\
        \le& \left( L + L_{g_1} + \frac{1}{\alpha_{\min}} \right) \|u^n - x^n\| + \frac{1 + 2\lambda_0}{\alpha_{\min}} \|x^n - x^{n-1}\| 
        \le C \left( \|x^{n+1} - x^n\| + \|x^n - x^{n-1}\| \right),
    \end{align*}
    where $C = \max\left\{ \frac{1}{\lambda_{\min}}\left( L + L_{g_1} + \frac{1}{\alpha_{\min}} \right), \frac{1 + 2\lambda_0}{\alpha_{\min}} \right\}$.
    This completes the proof.
\end{proof}

With the strict monotonic descent of the augmented Lyapunov function (Lemma \ref{lem:lyapunov_descent}) and the relative subgradient bound evaluated (Lemma \ref{lem:subgrad_alg3_smooth_g1}), the fundamental prerequisites for the Kurdyka-\L{}ojasiewicz (KL) framework are fully satisfied for the sequence of augmented states $z^n = (x^n, x^{n-1})$. Since $E(x)$ possesses the KL property, $\mathcal{L}_n(z^n)$ inherently inherits it. By executing the identical algebraic techniques as rigorously demonstrated in Theorem \ref{thm:global_alg2}, the global convergence of pABBDCA$_{\text{se}}$ follows naturally. To avoid mathematical redundancy, we state the final convergence theorem below and omit the repetitive algebraic derivations.

\begin{theorem}\label{thm:global_alg3_smooth_g1}
    The sequence $\{x^n\}_n$ generated by Algorithm \ref{alg:prox_bb_dca_ext_ada} globally converges to a critical point $x^*$ with a finite trajectory length, i.e., 
    \begin{equation*}
        \sum_{n=0}^{\infty} \|x^{n+1} - x^n\| < \infty.
    \end{equation*}
\end{theorem}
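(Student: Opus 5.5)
Under Assumption \ref{assum:smooth_g1}, I will run the standard KL argument on the augmented sequence $z^n=(x^n,x^{n-1})$ and the Lyapunov values $\mathcal{L}_n(z^n)$. The ingredients are Lemma \ref{lem:lyapunov_descent} and Lemma \ref{lem:subgrad_alg3_smooth_g1}, combined as in Theorem \ref{thm:global_alg2}.

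\textbf{Boundedness, limit value and cluster points.} By Lemma \ref{lem:lyapunov_descent}, $\mathcal{L}_n(z^n)$ is nonincreasing and bounded below, so it converges to some $\zeta$. Since the penalty term is nonnegative, $E(x^n)\le\mathcal{L}_n(z^n)\le\mathcal{L}_1(z^1)$, so $\{x^n\}$ lies in a compact level set of $E$. By \eqref{eq:sum_finite}, $\|x^{n+1}-x^n\|\to0$. Then $\|u^n-x^n\|\le\lambda_{\min}^{-1}\|x^{n+1}-x^n\|\to0$, where $\lambda_{\min}>0$ comes from the line-search argument of Lemma \ref{lem:well-definedness}. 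Here I would need to check that the descent inequality \eqref{eq:E_sub} with the bounded $\beta_n$ indeed gives a uniform step lower bound. The penalty term $\delta_n\|x^n-x^{n-1}\|^2$ has uniformly bounded weight $\delta_n\le\lambda_0/(2\alpha_{\min})$ and vanishes, so $E(x^n)\to\zeta$ as well. For any cluster point $x^*$ along a subsequence $x^{n_i}\to x^*$, Lemma \ref{lem:subgrad_alg3_smooth_g1} gives $\dist(\mathbf{0},\partial\mathcal{L}_{n_i}(z^{n_i}))\to0$. The optimality condition together with closedness of $\partial g_2$, as in Theorem \ref{global}(iii), then gives $0\in\nabla f(x^*)+\nabla g_1(x^*)-\partial g_2(x^*)$. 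Continuity of $f+g_1$ and lower semicontinuity of $-g_2$ along the chosen subgradients give $E(x^*)=\zeta$.

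\textbf{KL step.} The weight $\delta_n$ varies with $n$, so $\mathcal{L}_n$ is not one fixed function. To handle this I will apply the KL property to the fixed function $H(x,y)=E(x)+\bar\delta\|x-y\|^2$ for a cluster value $\bar\delta$ of $\delta_n$. A cleaner alternative is to apply KL to $E$ at $x^n$ directly and absorb the penalty term. Concretely, $\dist(\mathbf{0},\partial E(x^n))\le C(\|x^{n+1}-x^n\|+\|x^n-x^{n-1}\|)$ follows from the same computation as in Lemma \ref{lem:subgrad_alg3_smooth_g1}. Setting $\phi_n=\psi(\mathcal{L}_n-\zeta)-\psi(\mathcal{L}_{n+1}-\zeta)$, concavity of $\psi$ and \eqref{eq:L_strict_descent} give
\[
\phi_n\,\dist(\mathbf{0},\partial H(z^n))\ge \tilde c\,\|x^{n+1}-x^n\|^2 .
\]
With $a_n=\|x^{n+1}-x^n\|$, this yields $a_n^2\le K\phi_n(a_n+a_{n-1})$. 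The AM--GM trick $2\sqrt{ab}\le a+b$ then gives
\[
a_n\le 2K\phi_n+\tfrac12(a_n+a_{n-1})\cdot\tfrac12\cdots,
\]
that is, $\tfrac12 a_n\le K'\phi_n+\tfrac14(a_{n-1}-a_n)$. Telescoping, with $\sum\phi_n\le\psi(\mathcal{L}_{N}-\zeta)$, gives $\sum a_n<\infty$. So $\{x^n\}$ is Cauchy and converges to a critical point.

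\textbf{Main obstacle.} The hard step is making the KL inequality valid with varying $\delta_n$. If I apply KL to $E$ at $x^n$, the argument of $\psi$ is $E(x^n)-\zeta$ rather than $\mathcal{L}_n-\zeta$, and these differ by $\delta_n\|x^n-x^{n-1}\|^2$. I would first try the fixed function $H$ with $\bar\delta=\sup_n\delta_n$. Then $\mathcal{L}_n\le H(z^n)$, and the subgradient bound for $H$ has the same form, since $\|\bar\delta(x^n-x^{n-1})\|\le\bar\delta\|x^n-x^{n-1}\|$. What remains is to show that $H(z^n)$ still has a descent property. This is the same as \eqref{eq:L_strict_descent} except for the terms $(\bar\delta-\delta_{n+1})\|x^{n+1}-x^n\|^2-(\bar\delta-\delta_n)\|x^n-x^{n-1}\|^2$. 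I would control these by requiring $\tilde c>\bar\delta$-type margins, or else by absorbing them through $a_{n-1}^2$ terms in the same AM--GM telescoping. I expect this bookkeeping to be the only genuinely delicate part. I also need $H$ to be KL, which holds when $E$ is (semi-)algebraic or definable, as is implicitly assumed throughout the paper.
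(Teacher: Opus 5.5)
Your route is the paper's: the Lyapunov descent \eqref{eq:L_strict_descent}, the bound of Lemma \ref{lem:subgrad_alg3_smooth_g1}, concavity of $\psi$, the AM--GM step and telescoping. The paper only asserts that $\mathcal{L}_n$ ``inherits the KL property'' and never addresses the fact that $\delta_n$ changes with $n$. So the obstacle you isolate is real, and the paper does not resolve it either.

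Your plan for the obstacle, however, aims at the wrong target. You cannot get descent of $H(z^n)$, and you do not need it. With your $a_n=\|x^{n+1}-x^n\|$, you have $H(z^n)-H(z^{n+1})\ge(\tilde c-\bar\delta+\delta_{n+1})a_n^2+(\bar\delta-\delta_n)a_{n-1}^2$. Nothing ties $\tilde c\le c_0$ to $\bar\delta$, which may be as large as $\lambda_0\overline{\beta}/(2\alpha_{\min})$. So $H(z^n)$ need not be monotone, and demanding $\tilde c>\bar\delta$ would be a new parameter restriction, not a proof. Applying KL to $E$ at $x^n$ does not help for a general $\psi$ either. Since $\psi'$ is nonincreasing and $E(x^n)\le\mathcal{L}_n$, a lower bound on $\psi'(E(x^n)-\zeta)$ does not transfer to the smaller $\psi'(\mathcal{L}_n-\zeta)$. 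Moreover $E(x^n)$ may lie below $\zeta$ while $\mathcal{L}_n>\zeta$.

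The missing idea is to use that same monotonicity of $\psi'$ in the other direction. Fix $\bar\delta\ge\sup_n\delta_n$, for example the paper's $\delta_{\max}$; a cluster value of $\delta_n$ will not do. Then $H(z^n)\ge\mathcal{L}_n$. Keep $\phi_n=\psi(\mathcal{L}_n-\zeta)-\psi(\mathcal{L}_{n+1}-\zeta)$ built from the monotone sequence. Whenever $\zeta<\mathcal{L}_n$, $H(z^n)<\zeta+\nu$, and $z^n$ lies in the KL neighbourhood,
\[
\psi'(\mathcal{L}_n-\zeta)\,\dist(\mathbf{0},\partial H(z^n))\;\ge\;\psi'(H(z^n)-\zeta)\,\dist(\mathbf{0},\partial H(z^n))\;\ge\;1 .
\]
Combine this with $\phi_n\ge\psi'(\mathcal{L}_n-\zeta)(\mathcal{L}_n-\mathcal{L}_{n+1})$ and \eqref{eq:L_strict_descent}. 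You get exactly your inequality $\phi_n\dist(\mathbf{0},\partial H(z^n))\ge\tilde c\,a_n^2$, with no descent of $H$.

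The remaining hypotheses are routine. First, $H(z^n)=E(x^n)+\bar\delta a_{n-1}^2\to\zeta$. Second, the cluster set of $\{z^n\}$ is compact, consists of points $(x^*,x^*)$ that are critical for $H$ exactly when $x^*$ is critical for $E$, and satisfies $H\equiv\zeta$ there. So the uniformized KL property applies, provided $H$ is KL; as you note, this needs something like definability of $E$, and the paper also takes it for granted. Third, if $\mathcal{L}_N=\zeta$ for some $N$, then $a_n=0$ for all $n\ge N$. Your telescoping then finishes the proof. Its intermediate display is garbled, but the final form $\tfrac12 a_n\le K'\phi_n+\tfrac14(a_{n-1}-a_n)$ is correct.

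Finally, you cannot extract the uniform bound $\lambda_n\ge\lambda_{\min}$ from \eqref{eq:E_sub}. That inequality holds for every $\lambda\in(0,1]$, regardless of the line-search test. Before Young's inequality it contains the term $\frac{\lambda\beta_n}{\alpha_n}\langle d^n,x^n-x^{n-1}\rangle$, which can make $d^n$ an ascent direction for $E$. Acceptance then rests only on the slack $\nu_n=\frac{\omega}{n+1}\|d^n\|^2$, and nothing keeps $\lambda_n$ away from zero. Algorithm \ref{alg:prox_bb_dca_ext_ada} has no restart, and the paper never proves such a bound. It is silently built into the constant $C$ of Lemma \ref{lem:subgrad_alg3_smooth_g1}. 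By citing that lemma you inherit the same assumption, so state $\inf_n\lambda_n>0$ explicitly rather than attributing it to Lemma \ref{lem:well-definedness}.
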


\begin{proof}
    Because $E(x)$ is a KL function, $\mathcal{L}_n(x^n, x^{n-1})$ inherits the KL property. From Lemma \ref{lem:lyapunov_descent}, $\mathcal{L}_n(x^n, x^{n-1})$ monotonically converges to a limit $\mathcal{L}^*$.
    
    Assuming that $\psi(\cdot)$ is a continuous concave function given by the KL property, we seamlessly combine the subgradient bound in Lemma \ref{lem:subgrad_alg3_smooth_g1} and the strict descent in \eqref{eq:L_strict_descent}:
    \begin{align*}
        &[\psi(\mathcal{L}_n(x^n, x^{n-1}) - \mathcal{L}^*) - \psi(\mathcal{L}_{n+1}(x^{n+1}, x^n) - \mathcal{L}^*)] \cdot C \left( \|x^{n+1} - x^n\| + \|x^n - x^{n-1}\| \right) \\
        \ge &\psi'(\mathcal{L}_n(x^n, x^{n-1}) - \mathcal{L}^*) [\mathcal{L}_n(x^n, x^{n-1}) - \mathcal{L}_{n+1}(x^{n+1}, x^n)] \cdot C \left( \|x^{n+1} - x^n\| + \|x^n - x^{n-1}\| \right) \\
        \ge &\psi'(\mathcal{L}_n(x^n, x^{n-1}) - \mathcal{L}^*) \cdot\text{dist}(\textbf{0}, \partial \mathcal{L}_n(x^n, x^{n-1})) \cdot [\mathcal{L}_n(x^n, x^{n-1}) - \mathcal{L}_{n+1}(x^{n+1}, x^n)] \\
        = &\underbrace{\psi'(\mathcal{L}_n(x^n, x^{n-1}) - \mathcal{L}^*) \cdot\text{dist}(\textbf{0}, \partial \mathcal{L}_n(x^n, x^{n-1}))}_{\geq 1} \cdot [\mathcal{L}_n(x^n, x^{n-1}) - \mathcal{L}_{n+1}(x^{n+1}, x^n)] \\
        \ge &\mathcal{L}_n(x^n, x^{n-1}) - \mathcal{L}_{n+1}(x^{n+1}, x^n) \\
        \ge &\tilde{c} \|x^{n+1} - x^n\|^2,
    \end{align*}
    where the second inequality utilizes the concavity of $\psi$, the third inequality applies Lemma \ref{lem:subgrad_alg3_smooth_g1}, the fourth inequality holds with the KL-property of $\mathcal{L}_n$, and the final inequality is guaranteed by \eqref{eq:L_strict_descent}.

    Now denote $\phi(x^{n-1}, x^n, x^{n+1}, \mathcal{L}^*) = \psi(\mathcal{L}_n(x^n, x^{n-1}) - \mathcal{L}^*) - \psi(\mathcal{L}_{n+1}(x^{n+1}, x^n) - \mathcal{L}^*)$.
    The above chain of inequalities directly implies:
    \begin{align}
        \|x^{n+1} - x^n\|^2 \leq \tilde{K} ( \|x^{n+1} - x^n\| + \|x^n - x^{n-1}\| )\phi(x^{n-1}, x^n, x^{n+1}, \mathcal{L}^*)
    \end{align}
    where $\tilde{K} = \frac{C}{\tilde{c}} > 0$. Furthermore, applying the geometric mean inequality $a^2 \leq cd \implies a \leq c + \frac{d}{4}$, we have
    \begin{align}
        \|x^{n+1} - x^n\| \leq \tilde{K} \phi(x^{n-1}, x^n, x^{n+1}, \mathcal{L}^*) + \frac{1}{4} (\|x^{n+1} - x^n\| + \|x^n - x^{n-1}\|)
    \end{align}
    which is equivalent to
    \begin{align}
        \frac{1}{2} \|x^{n+1} - x^n\| \leq \tilde{K} \phi(x^{n-1}, x^n, x^{n+1}, \mathcal{L}^*) + \frac{1}{4} (\|x^n - x^{n-1}\| - \|x^{n+1} - x^n\|).
    \end{align}
    Denoting $\nu_n=\|x^{n+1} - x^n\|$, we conclude that
    \begin{align}
        \nu_n \leq 2\tilde{K} \phi(x^{n-1}, x^n, x^{n+1}, \mathcal{L}^*) + \frac{1}{2}(\nu_{n-1} - \nu_n).
    \end{align}
    Since $\nu_n \to 0$ as established in Lemma \ref{lem:lyapunov_descent}, we derive
    \begin{align}
        \sum_{n=T}^{\infty} (\nu_{n-1} - \nu_n) = \nu_{T-1}.
    \end{align}
    Summing the inequality from $T$ to $\infty$, we have 
    \begin{align}
        \sum_{n=T}^{\infty} \nu_n \leq 2\tilde{K} \psi(\mathcal{L}_T(x^T, x^{T-1}) - \mathcal{L}^*) + \frac{1}{2} \nu_{T-1}.
    \end{align}
    It is equivalent to
    \begin{align}
        \sum_{n=T}^{\infty} \|x^{n+1} - x^n\| \leq 2\tilde{K} \psi(\mathcal{L}_T(x^T, x^{T-1}) - \mathcal{L}^*) + \frac{1}{2}\|x^T - x^{T-1}\| < +\infty.
    \end{align}
    This completes the proof. 
\end{proof}

\subsection{Global convergence under Assumption \ref{assum:smooth_g2}}

In this section, we establish the global convergence of Algorithm \ref{alg:prox_bb_dca_ext_ada} (pABBDCA$_{\text{se}}$) when $g_2$ has an $L_{g_2}$-Lipschitz continuous gradient, and $g_1$ is non-smooth. As analyzed in Section 3.1, the non-smoothness of $g_1$ prevents us from directly evaluating the subgradient of the Lyapunov function $\mathcal{L}_n$ at $(x^n, x^{n-1})$. Instead, we bridge the strict descent of $\mathcal{L}_n$ with the KL property evaluated at the proximal point $u^n$.

First, since Algorithm \ref{alg:prox_bb_dca_ext_res} and Algorithm \ref{alg:prox_bb_dca_ext_ada} share the identical proximal subproblem structure, the relative subgradient bound evaluated at $u^n$ remains the same.

\begin{lemma}\label{lem:subgrad_bound_alg3_nonsmooth}
    Suppose that $\nabla g_2$ is Lipschitz continuous with constant $L_{g_2}$. Let $\{x^n\}_n$ and $\{u^n\}_n$ be the sequences generated by Algorithm \ref{alg:prox_bb_dca_ext_ada}. There exists a constant $C > 0$ such that
    \begin{equation}
    {\dist}(\emph{\textbf{0}}, \partial E(u^n)) \le C \left( \|u^n - x^n\| + \|x^n - x^{n-1}\| \right).
    \end{equation}
\end{lemma}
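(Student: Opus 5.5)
The argument repeats the proof of Lemma \ref{lem:subgrad_bound_alg2_smooth_g2}. The subproblem of Algorithm \ref{alg:prox_bb_dca_ext_ada} is identical to \eqref{eq:algorithm2:prox}; only the rule for choosing $\beta_n$ differs. That rule, \eqref{betan}, still gives $0\le\beta_n\le\hat\beta_n\le\overline{\beta}<1$, which is the only property of $\beta_n$ the estimate uses.

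\textbf{Step 1: a subgradient of $g_1$ at $u^n$.} Under Assumption \ref{assum:smooth_g2}, $\partial g_2(x^n)=\{\nabla g_2(x^n)\}$, so $\xi^n=\nabla g_2(x^n)$. The subproblem objective is strongly convex, so $u^n$ is well defined. Its optimality condition gives
\[
\eta^n := \nabla g_2(x^n)-\nabla f(u^n)-\frac{1}{\alpha_n}(u^n-y^n)\in\partial g_1(u^n).
\]
Here we use the sum rule $\partial(f+g_1)=\nabla f+\partial g_1$, valid because $f$ is smooth.

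\textbf{Step 2: a subgradient of $E$ at $u^n$.} Since $f$ and $g_2$ are smooth, $\partial E(u^n)=\nabla f(u^n)+\partial g_1(u^n)-\nabla g_2(u^n)$. Take
\[
w^n:=\nabla f(u^n)+\eta^n-\nabla g_2(u^n)=\nabla g_2(x^n)-\nabla g_2(u^n)-\frac{1}{\alpha_n}(u^n-y^n).
\]
The term $\nabla f(u^n)$ cancels because $f$ was kept implicit in the subproblem.

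\textbf{Step 3: the norm estimate.} Write $u^n-y^n=(u^n-x^n)-\beta_n(x^n-x^{n-1})$. Then use the Lipschitz continuity of $\nabla g_2$, the bound $\alpha_n\ge\alpha_{\min}$ enforced by the BB safeguard, and $\beta_n<1$:
\[
\|w^n\|\le\Big(L_{g_2}+\frac{1}{\alpha_{\min}}\Big)\|u^n-x^n\|+\frac{1}{\alpha_{\min}}\|x^n-x^{n-1}\|.
\]
With $C=L_{g_2}+1/\alpha_{\min}$ this gives $\dist(\textbf{0},\partial E(u^n))\le\|w^n\|\le C(\|u^n-x^n\|+\|x^n-x^{n-1}\|)$.

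There is no real obstacle. The only points to check are that the safeguarded $\beta_n$ stays in $[0,1)$ (it does, since $\beta_n\le\hat\beta_n\le\overline{\beta}$) and that $\alpha_n\in[\alpha_{\min},\alpha_{\max}]$ under the ABB update.
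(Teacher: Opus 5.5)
Your proposal is correct and is essentially the paper's argument: the paper justifies this lemma only by noting that Algorithm~\ref{alg:prox_bb_dca_ext_ada} has the same subproblem as Algorithm~\ref{alg:prox_bb_dca_ext_res} and reusing the proof of Lemma~\ref{lem:subgrad_bound_alg2_smooth_g2}, and your explicit check that the safeguarded $\beta_n$ stays in $[0,\overline{\beta}]$ is exactly what that transfer needs. One small point, which the paper also overlooks: $\alpha_0$ is not clipped to $[\alpha_{\min},\alpha_{\max}]$, so to cover $n=0$ you should take $C=L_{g_2}+1/\min\{\alpha_0,\alpha_{\min}\}$.
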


Now, we present the global convergence theorem by bridging the KL inequality evaluated at $u^n$ with the descent of the Lyapunov function $\mathcal{L}_n$.

\begin{theorem}\label{thm:global_alg3_nonsmooth}
    Let $\{x^n\}_n$ be the bounded sequence generated by Algorithm \ref{alg:prox_bb_dca_ext_ada}. Suppose $E(x)$ is a KL function with exponent $\theta \in [0, 1)$. Then, the sequence $\{x^n\}_n$ globally converges to a critical point $x^*$ with a finite trajectory length, i.e.,
    \begin{equation}
        \sum_{n=0}^{\infty} \|x^{n+1} - x^n\| < \infty.
    \end{equation}
\end{theorem}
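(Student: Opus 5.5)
We combine the strict Lyapunov descent of Lemma \ref{lem:lyapunov_descent} with the KL inequality for $E$ evaluated at $u^n$, in the style of the proofs of Theorems \ref{thm:global_smooth_g2} and \ref{thm:global_alg2_smooth_g2}. Write $\mathcal{L}_n := \mathcal{L}_n(x^n,x^{n-1})$. By Lemma \ref{lem:lyapunov_descent}, $\mathcal{L}_n \downarrow \mathcal{L}^*$ and $\|x^{n+1}-x^n\|\to 0$. Since $\lambda_n \ge \lambda_{\min}$, we also get $\|u^n-x^n\|\to 0$.

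The $\lambda_{\min}$ bound needs a remark. Under Assumption \ref{assum:smooth_g2}, Lemma \ref{lem:quasi_descent} shows that the Armijo-type condition holds once $\lambda_n$ is small relative to $\|x^n-x^{n-1}\|/\|d^n\|$. I will assume, as the paper does for the other algorithms, that a uniform $\lambda_{\min}$ exists, for instance because $\beta_n$ is truncated.

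The correction term $\frac{\lambda_n\beta_n}{2\alpha_n}\|x^n-x^{n-1}\|^2$ tends to zero. Hence $E(x^n)\to\mathcal{L}^*=:\zeta$. Along a subsequence with $x^{n_i}\to x^*$, we also have $u^{n_i}\to x^*$. Lemma \ref{lem:subgrad_bound_alg3_nonsmooth} together with closedness of $\partial E$ then gives criticality of $x^*$ and $E(x^*)=\zeta$.

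Set $\Delta_n=\mathcal{L}_n-\zeta\ge 0$, and use \eqref{eq:L_strict_descent} in the form $\Delta_{n}-\Delta_{n+1}\ge \tilde c\|x^{n+1}-x^n\|^2\ge \tilde c\lambda_{\min}^2\|u^n-x^n\|^2$. Squaring the KL inequality at $u^n$ and inserting Lemma \ref{lem:subgrad_bound_alg3_nonsmooth} gives, exactly as in \eqref{eq:alg2_kl_gap},
\[
(E(u^n)-\zeta)^{2\theta}\le C_1(\Delta_{n-1}-\Delta_{n+1})
\]
whenever $E(u^n)>\zeta$.

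The key step is a lower bound for $E(u^n)-\zeta$ in terms of $\Delta_{n+1}$. I will apply Lemma \ref{lem:energy_relation_smooth_g2}; its proof uses only $x^{n+1}=(1-\lambda_n)x^n+\lambda_n u^n$, convexity of $f+g_1$, and smoothness of $g_2$, so it carries over verbatim. It gives
\[
E(x^{n+1})-\zeta\le(1-\lambda_n)(E(x^n)-\zeta)+\lambda_n(E(u^n)-\zeta)+\tfrac{L_{g_2}}{2}\lambda_n\|u^n-x^n\|^2 .
\]
I then replace $E(x^k)$ by $\mathcal{L}_k$ up to the correction $\delta_k\|x^k-x^{k-1}\|^2$. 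Each such term is bounded by a multiple of $\Delta_{k-1}-\Delta_k$. The result should be
\[
E(u^n)-\zeta\ge\Delta_{n+1}-\tilde K(\Delta_{n-1}-\Delta_{n+1}),
\]
using monotonicity of $\Delta$ to merge one-step gaps into the two-step gap $\Delta_{n-1}-\Delta_{n+1}$.

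This bookkeeping is the main obstacle. The Lyapunov correction at index $n+1$ involves $\|x^{n+1}-x^n\|^2$, which is controlled by $\Delta_n-\Delta_{n+1}$. The correction at index $n$, weighted by $(1-\lambda_n)/\lambda_n$, is controlled by $\Delta_{n-1}-\Delta_n$. Both are absorbed into $\Delta_{n-1}-\Delta_{n+1}$, so no gap of lag greater than two should appear.

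With this lower bound in hand, I split into Case 1, $\tilde K(\Delta_{n-1}-\Delta_{n+1})\le\frac12\Delta_{n+1}$, and Case 2, the reverse inequality. In Case 1, $E(u^n)-\zeta\ge\frac12\Delta_{n+1}>0$, so the KL estimate gives $\Delta_{n-1}-\Delta_{n+1}\ge\gamma\Delta_{n+1}^{2\theta}$. In Case 2, $\Delta_{n+1}\le q_2\Delta_{n-1}$ with $q_2<1$.

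The analysis by exponent then proceeds as in Theorem \ref{thm:global_alg2_smooth_g2}.
\begin{itemize}
\item For $\theta=0$: the KL bound forces $E(u^n)\le\zeta$ eventually, and the energy relation gives geometric decay of $\Delta$.
\item For $\theta\in(0,1/2]$: both cases give $\Delta_{n+1}\le q\Delta_{n-1}$, so $\sum\sqrt{\Delta_{n-1}}<\infty$.
\item For $\theta\in(1/2,1)$: concavity of $s^{1-\theta}$ gives $\sqrt{\Delta_{n-1}-\Delta_{n+1}}\le C_3(\Delta_{n-1}^{1-\theta}-\Delta_{n+1}^{1-\theta})$. This telescopes after splitting into even and odd indices.
\end{itemize}

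In every case $\sum_n\sqrt{\Delta_{n-1}-\Delta_{n+1}}<\infty$. Finally, $\|x^{n+1}-x^n\|\le\tilde c^{-1/2}\sqrt{\Delta_n-\Delta_{n+1}}\le\tilde c^{-1/2}\sqrt{\Delta_{n-1}-\Delta_{n+1}}$, which yields finite length. Finite length makes $\{x^n\}$ Cauchy, so it converges, and by the criticality argument above its limit $x^*$ is a critical point.
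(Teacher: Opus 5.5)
Your proposal is correct and follows the paper's proof essentially step for step. It uses the Lyapunov gaps $\Delta_n=\mathcal{L}_n-\zeta$ and the KL inequality at $u^n$ together with Lemma~\ref{lem:subgrad_bound_alg3_nonsmooth}, giving $(E(u^n)-\zeta)^{2\theta}\le C_1(\Delta_{n-1}-\Delta_{n+1})$. It then rewrites the energy relation of Lemma~\ref{lem:energy_relation_smooth_g2} via $E(x^k)=\mathcal{L}_k-\delta_k\|x^k-x^{k-1}\|^2$ to get $E(u^n)-\zeta\ge\Delta_{n+1}-\tilde K'(\Delta_{n-1}-\Delta_{n+1})$, and finishes with the two-case analysis of Theorem~\ref{thm:global_alg2_smooth_g2}. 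One small difference: the paper simply drops the index-$n$ correction, since it enters with a favorable sign, whereas you bound it by $\Delta_{n-1}-\Delta_n$; both work.

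The uniform bound $\lambda_n\ge\lambda_{\min}$ that you explicitly assume is likewise used without proof in the paper for Algorithm~\ref{alg:prox_bb_dca_ext_ada}. Your heuristic reasons for it do not hold up, however. In the proof of Lemma~\ref{lem:quasi_descent}, the extrapolation error term is linear in $\lambda_n$, so shrinking $\lambda_n$ does not help, and truncating $\beta_n$ does not help either. Only the slack $\nu_n$ forces the line search to terminate.
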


\begin{proof}
    By Lemma \ref{lem:lyapunov_descent}, the Lyapunov sequence $\mathcal{L}_n(x^n, x^{n-1}) = E(x^n) + \frac{\lambda_n\beta_n}{2\alpha_n}\|x^n - x^{n-1}\|^2$ is monotonically decreasing to a limit $\zeta = \mathcal{L}^*$. Furthermore, there exists a constant $\tilde{c} > 0$ such that
    \begin{equation}\label{eq:alg3_suff_descent}
        \mathcal{L}_n - \mathcal{L}_{n+1} \ge \tilde{c} \|x^{n+1} - x^n\|^2.
    \end{equation}
    Notice that as $\|x^{n+1} - x^n\| \to 0$, the extrapolation penalty vanishes, implying $\lim\limits_{n \to \infty} E(x^n) = \lim\limits_{n \to \infty} \mathcal{L}_n = \zeta$.
    
    Let $\Delta_n = \mathcal{L}_n - \zeta \ge 0$. Since $E$ is a KL function with exponent $\theta \in [0, 1)$, for any sufficiently large $n$ satisfying $E(u^n) > \zeta$, the KL inequality evaluated at $u^n$ combined with Lemma \ref{lem:subgrad_bound_alg3_nonsmooth} yields:
    \begin{equation}\label{eq:alg3_kl_squared}
        (E(u^n) - \zeta)^{2\theta} \le C^2 c^2 (1-\theta)^2 \cdot 2\left( \|u^n - x^n\|^2 + \|x^n - x^{n-1}\|^2 \right).
    \end{equation}
    
    Using the line search relation $x^{n+1} - x^n = \lambda_n(u^n - x^n)$ with $\lambda_n \ge \lambda_{\min}$, we bound the spatial distances using the Lyapunov gaps from \eqref{eq:alg3_suff_descent}:
    \begin{align*}
        \|u^n - x^n\|^2 &= \frac{1}{\lambda_n^2}\|x^{n+1} - x^n\|^2 \le \frac{1}{\lambda_{\min}^2 \tilde{c}}(\Delta_n - \Delta_{n+1}), \\
        \|x^n - x^{n-1}\|^2 &\le \frac{1}{\tilde{c}}(\Delta_{n-1} - \Delta_n).
    \end{align*}
    Substituting these into \eqref{eq:alg3_kl_squared} and defining $C_1 = \frac{2 C^2 c^2 (1-\theta)^2}{\tilde{c} \lambda_{\min}^2}$, we obtain the gap relation:
    \begin{equation}\label{eq:alg3_kl_gap}
        (E(u^n) - \zeta)^{2\theta} \le C_1 \left( (\Delta_n - \Delta_{n+1}) + (\Delta_{n-1} - \Delta_n) \right) = C_1 (\Delta_{n-1} - \Delta_{n+1}).
    \end{equation}
    
    The primary theoretical challenge here is that $E(x^n)$ is not monotonically decreasing, requiring us to bridge $E(u^n)$ with the Lyapunov sequence $\mathcal{L}_n$. Recall the convex combination relation from Lemma \ref{lem:energy_relation_smooth_g2}:
    \begin{equation}\label{eq:alg3_convex_combo}
        E(x^{n+1}) \le (1-\lambda_n)E(x^n) + \lambda_n E(u^n) + \frac{L_{g_2}}{2} \lambda_n\|u^n - x^n\|^2.
    \end{equation}
    Let $\delta_k = \frac{\lambda_k\beta_k}{2\alpha_k} \le \frac{\lambda_0 \bar{\beta}}{2\alpha_{\min}} := \delta_{\max}$. By definition, we can express the original energy as $E(x^k) = \mathcal{L}_k - \delta_k\|x^k - x^{k-1}\|^2$. Substituting this equivalence for $k \in \{n, n+1\}$ into \eqref{eq:alg3_convex_combo}, we obtain:
    \begin{align*}
        \mathcal{L}_{n+1} - \delta_{n+1}\|x^{n+1} - x^n\|^2 \le &(1-\lambda_n)(\mathcal{L}_n - \delta_n\|x^n - x^{n-1}\|^2) \\
        &+ \lambda_n E(u^n) + \frac{L_{g_2}}{2} \lambda_n\|u^n - x^n\|^2.
    \end{align*}
    Rearranging the inequality to isolate $\lambda_n E(u^n)$ and dropping the positive term $(1-\lambda_n)\delta_n\|x^n - x^{n-1}\|^2$ for a lower bound, we deduce:
    \begin{equation*}
        \lambda_n E(u^n) \ge \mathcal{L}_{n+1} - (1-\lambda_n)\mathcal{L}_n - \delta_{\max}\|x^{n+1} - x^n\|^2 - \frac{L_{g_2} \lambda_0}{2}\|u^n - x^n\|^2.
    \end{equation*}
    Subtracting $\lambda_n \zeta = \zeta - (1-\lambda_n)\zeta$ from both sides, and denoting $C_2 = \frac{\delta_{\max}}{\tilde{c}} + \frac{L_{g_2}\lambda_0}{2\lambda_{\min}^2\tilde{c}}$, we arrive at
    \begin{align*}
        \lambda_n(E(u^n) - \zeta) &\ge \Delta_{n+1} - (1-\lambda_n)\Delta_n - C_2(\Delta_n - \Delta_{n+1}) \\
        &= \lambda_n \Delta_{n+1} - (1 - \lambda_n + C_2)(\Delta_n - \Delta_{n+1}).
    \end{align*}
    Dividing by $\lambda_n \ge \lambda_{\min}$ and noting that $\Delta_n - \Delta_{n+1} \le \Delta_{n-1} - \Delta_{n+1}$, we have
    \begin{equation}\label{eq:alg3_Eun_lower}
        E(u^n) - \zeta \ge \Delta_{n+1} - \tilde{K}'(\Delta_{n-1} - \Delta_{n+1}),
    \end{equation}
    where $\tilde{K}' = \frac{1 - \lambda_{\min} + C_2}{\lambda_{\min}} > 0$. 

    Consequently, equations \eqref{eq:alg3_kl_gap} and \eqref{eq:alg3_Eun_lower} construct the identical framework as in Theorem \ref{thm:global_alg2_smooth_g2}, strictly evaluated over the Lyapunov gaps $\Delta_n$. 
    By applying the identical analysis based on $\tilde{K}'(\Delta_{n-1} - \Delta_{n+1})$ and evaluating the KL exponent $\theta \in [0, 1)$, we directly conclude that
    \begin{equation*}
        \sum_{n=N_0}^{\infty} \sqrt{\Delta_{n-1} - \Delta_{n+1}} < \infty.
    \end{equation*}

    Finally, leveraging the strict descent of the Lyapunov function \eqref{eq:alg3_suff_descent}, we have $\|x^{n+1} - x^n\| \le \frac{1}{\sqrt{\tilde{c}}}\sqrt{\Delta_n - \Delta_{n+1}}$. It follows that
    \begin{align*}
        \sum_{n=N_0}^{\infty} \|x^{n+1} - x^n\| 
        &\le \frac{1}{\sqrt{\tilde{c}}} \sum_{n=N_0}^{\infty} \sqrt{\Delta_n - \Delta_{n+1}} \\
        &\le \frac{1}{\sqrt{\tilde{c}}} \sum_{n=N_0}^{\infty} \sqrt{\Delta_{n-1} - \Delta_{n+1}} < +\infty.
    \end{align*}
    Including the finite steps before $N_0$, the global sequence achieves a finite trajectory length $\sum_{n=0}^{\infty} \|x^{n+1} - x^n\| < \infty$, completing the proof.
\end{proof}

\begin{remark}
Following the standard Kurdyka-\L ojasiewicz framework established in \cite{Attouch2009, Artacho2018}, the local convergence rates are fundamentally governed by the KL exponent $\theta$ of the underlying function. For the proposed pABBDCA$_{\text{er}}$ and pABBDCA$_{\text{se}}$, we have rigorously established the requisite subgradient bounds and strict descent conditions (for the energy function and the Lyapunov function, respectively). Consequently, both extrapolated variants naturally inherit the identical local convergence rates as delineated in Theorem \ref{thm:convergence_rate}. 
\end{remark}

\section{Numerical experiments} \label{sec:pre}
In this section, we perform two numerical experiments to demonstrate the efficiency of our algorithm in solving nonconvex optimization problems:

\textbf{(1) Least squares problems with SCAD regularizer}: conducted on a computer with Intel(R) Core(TM) Ultra 5 125H (3.60 GHz). 

\textbf{(2) Graphic Ginzburg-Landau model}: executed on a workstation with Intel(R) Xeon(R) CPU E5-2699A v4 (2.40GHz) and a GPU of NVIDIA GeForce RTX 2080 Ti.

Implementation details, parameter settings, and comparative results will be discussed below. Additionally, Remark \ref{KL:two:models} illustrates the KL properties of the two problems.

\subsection{Least squares problems with SCAD regularizer}
 We consider the smoothly clipped absolute deviation (SCAD) regularization, whose DC decomposition can be expressed as (see \cite[Section 6.1]{APX} or \cite{Wen2018})
 \begin{equation*}
    P(x) = \mu\sum_{i=1}^{k}\int_0^{|x_i|}\min\left\{1,\frac{[\theta\mu-t]_+}{(\theta-1)\mu}\right\} dt=\mu\|x\|_1 -  \underbrace{\mu\sum_{i=1}^{k}\int_0^{|x_i|}\frac{[\min\{\theta\mu,t\}-\mu]_+}{(\theta-1)\mu}dt}_{\tilde P(x)}
\end{equation*}
 where $\theta>2$ is a constant, $\mu>0$ serves as the regularization parameter, $[x]_{+}=\max\{0,x\}$ and $\tilde P(x) =  \mu \|x\|_1 -P(x) = \sum_{i=1}^k \tilde p_i(u_i)$. One can easily verify that $\tilde P(x)$ is continuously differentiable with the gradient
 \begin{equation*}
    \tilde  p_i(x_i) = \left\{
        \begin{array}{ll}
         0& \text{if}  \ \  \ |x_i|\leq\mu \\
         \frac{(|x_i| - \mu)^2}{2(\theta - 1)}& \text{if} \ \  \ \mu<|x_i|<\theta \\
         \mu|x_i| - \frac{\mu^2(\theta+1)}{2}&\text{if}\ \ \ |x_i|\geq\theta\mu 
    \end{array},
        \right. \ \ \nabla_i \tilde P_i(x_i)= \text{sign}(x_i) \dfrac{[\min\{\theta \mu,|x_i|\} -\mu]_{+}} {(\theta-1)}.
\end{equation*}
 
 Applying SCAD regularization to the least squares problem, we obtain the following optimization formulation:
\begin{equation}\label{scad}
    \min_{x\in \mathbb{R}^k}E(x) = \frac12\|Ax-b\|^2 + P(x)= \frac12\|Ax-b\|^2+\mu\|x\|_1-\tilde P(x),  \quad A\in \mathbb{R}^{m\times k},  \ b\in \mathbb{R}^{m}.
\end{equation}
    
In our implementation, linearizing the smooth component $f(x) = \frac{1}{2}\|Ax-b\|^2$ reduces the proximal subproblem to an exact closed-form soft-thresholding operation. To formulate the least squares problem, we first generate an $m \times k$ random matrix $A$ with normalized columns ($\|A_j\|_2 = 1$ for all $j = 1,...,k$). We then construct a sparse vector $y \in \mathbb{R}^k$ by uniformly randomly selecting its support $T \subset \{1,...,k\}$ with size $|T| = s$. In other words, $s$ denotes the number of non-zero elements in $y$, which characterizes its sparsity level. The vector $b \in \mathbb{R}^m$ is generated according to
\begin{equation*}
    b = Ay +  0.01 \xi,\quad \xi \sim \mathcal{N}(0,I_m),
\end{equation*}
where $\xi$ consists of i.i.d. standard Gaussian entries. All algorithms are initialized at the origin and terminate when the {\it relative step length} satisfies
\begin{equation*}
\text{RSL}(x^n):= \frac{\|x^{n} - x^{n-1}\|}{\max\{1, \|x^{n}\|\}} < 10^{-12}.
\end{equation*}
Here $\text{RSL}(x^n)$ denotes the relative step length \cite[equation (5.2)]{Wen2018}. We focus on problem sizes characterized by the tuple \( (m, k, s) = (720i, 2560i, 80i) \), where  \( i \) is an integer ranging from 1 to 10. This setup aligns with the high-dimensional setting in sparse statistics where the data dimension exceeds the number of data points, and the solution is sufficiently sparse \cite{hastie2015statistical}. 

In our experiments, we compute SCAD \eqref{scad} with different algorithms for comparative analysis. In the evaluation of various algorithms, the iteration count (denoted as \( \text{iter} \)) and the CPU time are meticulously recorded. Concurrently, we record the number of nonzero elements in the output vector to ascertain whether the solution adheres to the prescribed sparsity constraints.

The computational results are summarized in Table~\ref{table1}, corresponding to our problem~\eqref{scad} with parameters $\mu = 0.033$ and $\theta = 10$. For each problem size, five random instances are generated, and the values reported in each row of Table~\ref{table1} represent the average performance across all instances. To evaluate the performance of our proposed pBBDCA and its two extrapolation variants, we compare it against four established algorithms: the classical DC Algorithm (DCA), the Boosted DC Algorithm with backtracking (BDCA), the proximal DC algorithm with extrapolation (pDCA$_e$) and the proximal DC algorithm with nonmonotone line search (EPDCA). The implementation details of these algorithms are discussed below.

\begin{itemize}
    \item \textbf{pBBDCA}: This algorithm is represented by Algorithm \ref{alg:prox_bb_dca} where $f(x)=\frac12\|Ax-b\|^2$, $g_1(x)=\mu\|x\|_1$ and $g_2(x) =  \tilde P(x)$. The parameter values for this algorithm are chosen as follows:  $\alpha_{\min} =10^{-4}$, $\alpha_{\max} =10^{4}$, $\eta=10^{-4}$, $\omega = 0.9$ and the line search decay factor $\rho = 0.5$.

    \item \textbf{pABBDCA}: This algorithm is identical to pBBDCA, except that it replaces the standard BB1 step size with an alternating Barzilai-Borwein (ABB) step size strategy. All other parameter settings remain the same as in pBBDCA.

     \item \textbf{pABBDCA$_{\text{er}}$}: This algorithm corresponds to our Algorithm \ref{alg:prox_bb_dca_ext_res}, which incorporates an alternating BB step size and extrapolation with a restart mechanism. The convex splitting and the base parameter values for this algorithm remain consistent with pBBDCA. Additionally, the extrapolation upper bound is chosen as $\overline{\beta} = 0.9$, and the maximum line search iterations limit for triggering a restart is set to $N_{\max} = 5$.

    \item \textbf{pABBDCA$_{\text{se}}$}: This algorithm is presented by our Algorithm \ref{alg:prox_bb_dca_ext_ada}, which employs an adaptive safeguarded extrapolation strategy. It shares the identical convex splitting, basic parameter settings, and the extrapolation upper bound with pABBDCA$_{\text{er}}$. The positive constant for the safeguarded condition is set to $c_0 = 10^{-7}$.

     \item \textbf{EPDCA}: This algorithm is represented by \cite{luzhaosong}. It repeatedly solves the subproblems by updating the BB stepsize through a different type of line search. The parameter settings are: $\alpha_{\min} = 10^{-4}  $, $  \alpha_{\max} = 10^{4}  $, $  \eta = 10^{-4}  $, $  \omega = 0.9  $, line search decay factor $  \rho = 0.5  $, nonmonotone memory window $  M = 10  $, and sufficient decrease parameter $  \gamma = 10^{-4}  $.
    
   \item \textbf{pDCA$_e$}: Proposed by \cite{Wen2018}, this algorithm accelerates the standard proximal DC framework by incorporating a Nesterov-type extrapolation technique. The convex splitting remains consistent with our framework. The extrapolation parameter $\beta_k$ is updated and restarted adaptively.

    \item \textbf{BDCA}: This algorithm is based on a combination of DCA together with an aggressive line search technique, as detailed in \cite[Algorithm 2]{Artacho2018}. The parameter values of the line search part for this algorithm are chosen as follows:  $\lambda_{\max} = 10$,  $\alpha = 10^{-4}$, $\beta = 0.6$.
    \item \textbf{DCA}: This is the classical DC algorithm, which can be regarded as a special version of the algorithm BDCA without the line search \cite[Algorithm 1]{Artacho2018}. 
\end{itemize}

Table \ref{table1} summarizes the computational results for solving the SCAD regularization problem across randomly generated instances of increasing dimensions. It is evident that our pBBDCA algorithm significantly outperforms all compared methods in terms of both CPU time and iteration complexity. Furthermore, pBBDCA consistently requires the fewest iterations across all problem scales. This algorithmic efficiency translates into an advantage in CPU time, which becomes increasingly pronounced as the problem dimension grows. For the largest problem size ($m=7200, k=25600$), pBBDCA requires only 3.0 seconds, making it more than twice as fast as EPDCA (6.7s) and an order of magnitude faster than classical DCA (32.7s). The accurately recovered sparsity ($s'$) remains highly consistent with the ground truth ($s$), confirming that this remarkable acceleration does not compromise the statistical precision of the solution.

\begin{table}[htbp!]
\centering
\caption{Solving \eqref{scad} on random instances\\ (\textcircled{1}DCA \textcircled{2}BDCA \textcircled{3}pDCA$_e$ \textcircled{4}EPDCA \textcircled{5}pBBDCA)}\label{table1}
{
\begin{tabular}{ccc|rrrrr|rrrrr|c}
\hline
\multicolumn{3}{c|}{Size} & \multicolumn{5}{c|}{iter} & \multicolumn{5}{c|}{CPU time (s)} & \multicolumn{1}{c}{Sparsity}\\ 
\hline
$m$ & $k$ & $s$ & \textcircled{1} & \textcircled{2} & \textcircled{3} & \textcircled{4} & \textcircled{5} & \textcircled{1} & \textcircled{2} & \textcircled{3} & \textcircled{4} & \textcircled{5} &$s'$ \\ \hline
720 & 2560 & 80 & 577 & 111 & 155 & 57 & \textbf{52} & 0.3 & 0.4 & \textbf{0.1} & \textbf{0.1} & \textbf{0.1} & 81 \\
1440 & 5120 & 160 & 581 & 114 & 155 & 58 & \textbf{54} & 1.3 & 1.7 & 0.4 & 0.3 & \textbf{0.1} & 159 \\
2160 & 7680 & 240 & 584 & 112 & 156 & 60 & \textbf{53} & 3.4 & 3.3 & 1.0 & 0.8 & \textbf{0.4} & 242 \\
2880 & 10240 & 320 & 582 & 114 & 153 & 58 & \textbf{52} & 5.8 & 5.5 & 1.6 & 1.2 & \textbf{0.5} & 320 \\
3600 & 12800 & 400 & 595 & 120 & 149 & 58 & \textbf{54} & 8.6 & 8.4 & 2.2 & 1.7 & \textbf{0.8} & 403 \\
4320 & 15360 & 480 & 600 & 118 & 157 & 60 & \textbf{51} & 11.9 & 11.0 & 3.2 & 2.5 & \textbf{1.0} & 480 \\
5040 & 17920 & 560 & 588 & 116 & 156 & 58 & \textbf{52} & 14.9 & 14.0 & 4.0 & 3.0 & \textbf{1.3} & 562 \\
5760 & 20480 & 640 & 591 & 115 & 155 & 57 & \textbf{50} & 19.4 & 18.5 & 5.2 & 4.0 & \textbf{1.7} & 642 \\
6480 & 23040 & 720 & 599 & 114 & 155 & 58 & \textbf{54} & 24.7 & 23.4 & 6.5 & 5.0 & \textbf{2.3} & 721 \\
7200 & 25600 & 800 & 589 & 115 & 151 & 57 & \textbf{52} & 32.7 & 31.8 & 8.4 & 6.7 & \textbf{3.0} & 804 \\ \hline
\end{tabular}
}
\end{table}

To comprehensively demonstrate the numerical efficiency and scalability of the proposed pBBDCA, we present the convergence profiles for two representative problem scales: i=5 ($m=3600, k=12800$) and i=10 ($m=7200, k=25600$) in Figure \ref{fig:conv_index5} and Figure \ref{fig:conv_index10}, respectively. To provide a multi-dimensional evaluation, Figure \ref{fig:conv_index5} illustrates the evolution of the objective gap, $E(x^k) - E^*$, whereas Figure \ref{fig:conv_index10} tracks the decay of the relative step length $\text{RSL}(x^n)$, with respect to both the number of iterations and the total CPU time. 

From the iteration-wise trajectories in Figure \ref{fig:conv_index5}(a) and Figure \ref{fig:conv_index10}(a), it is evident that both pBBDCA and EPDCA exhibit superior iteration complexity compared to other algorithms, rapidly driving down both the objective energy and the relative variation. However, when evaluating the convergence against CPU time in Figure \ref{fig:conv_index5}(b) and Figure \ref{fig:conv_index10}(b), the practical advantage of pBBDCA becomes remarkably pronounced. Consequently, pBBDCA not only achieves the fastest energy descent but also reaches the relative termination tolerance in the shortest time, firmly confirming its practical superiority for large-scale non-convex optimization.

\begin{figure}[htbp!]
    \centering
    \includegraphics[width=1.0\textwidth]{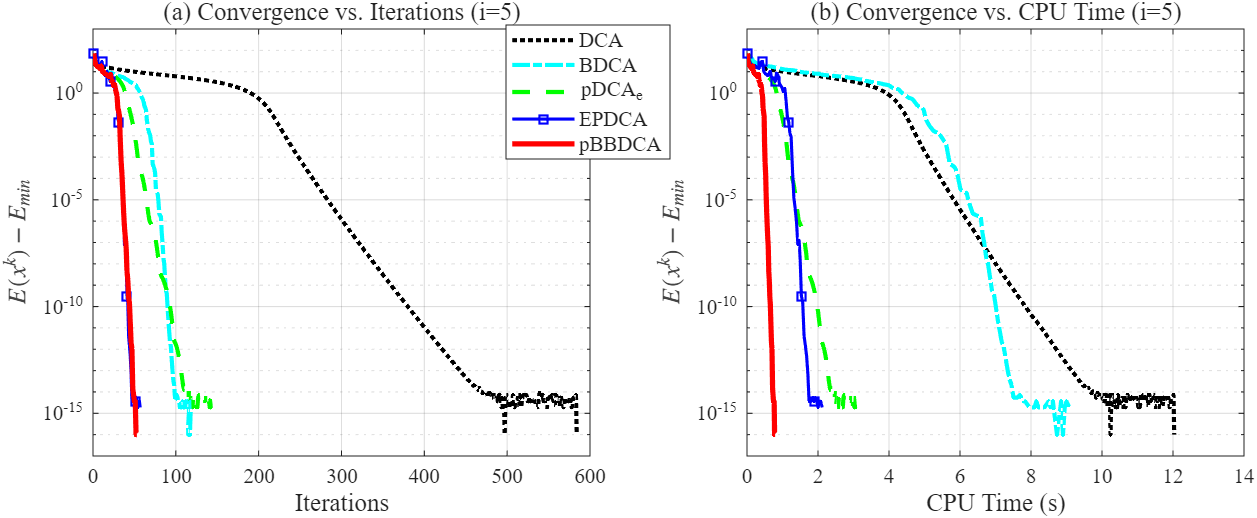}
    \caption{Convergence curves of the tested algorithms for $i=5$ ($m=3600, k=12800$), evaluated by the objective gap $E(x^k) - E^*$.}
    \label{fig:conv_index5}
\end{figure}

\begin{figure}[htbp!]
    \centering
    \includegraphics[width=1.0\textwidth]{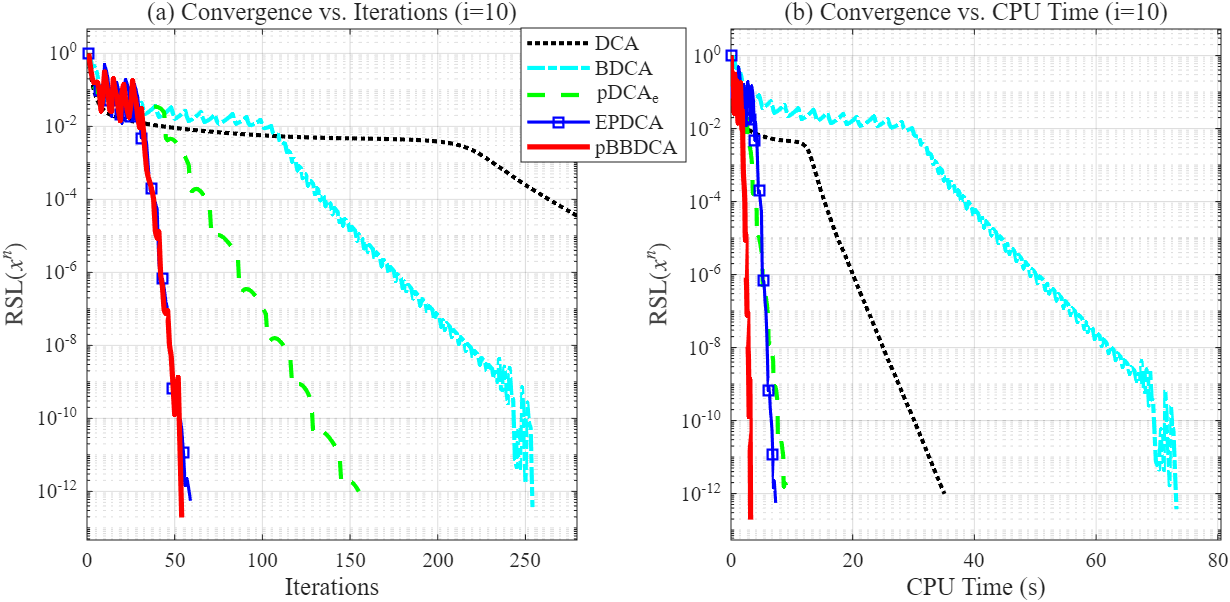} 
    \caption{Convergence curves of the tested algorithms for $i=10$ ($m=7200, k=25600$), evaluated by the relative step length $\text{RSL}(x^n)$.}
    \label{fig:conv_index10}
\end{figure}

We further conduct validation on binary classification tasks using LIBSVM datasets \cite{Chang2011LIBSVMAL} converted to the MATLAB format. For the classification task, we reduced the parameter $\mu$ to $5 \times 10^{-4}$. Here, experiments are initialized at \(x^{0}=0\) with stopping criteria defined as

\[
\text{RSL}(x^n)<\epsilon\text{, where }\epsilon= 10^{-i}\text{, for }i=6,7,\ldots,10.
\]

The maximum number of iterations is capped at $500000$. In Table \ref{table2}, the entry "Max" indicates that an algorithm failed to satisfy the stopping criterion within this maximum iteration limit. Table \ref{table2} highlights the computational superiority of our proposed algorithms (pBBDCA, pABBDCA$_{\text{er}}$, and pABBDCA$_{\text{se}}$) in high-precision scenarios ($\epsilon \le 10^{-6}$). While conventional DCA and BDCA fail to converge and EPDCA is hindered by repeated proximal subproblem evaluations during line search, pBBDCA achieves roughly twice the speed of EPDCA. Furthermore, the extrapolated variants deliver the best performance, and their dominance shifts depending on the precision.

\begin{table}[htbp!]
\centering
\caption{Solving \eqref{scad} on LIBSVM datasets\\ (\textcircled{1}DCA \textcircled{2}BDCA \textcircled{3}pDCA$_e$ \textcircled{4}EPDCA \textcircled{5}pBBDCA
\textcircled{6}pABBDCA$_{\text{er}}$ \textcircled{7}pABBDCA$_{\text{se}}$ \textcircled{8}pABBDCA)}\label{table2}
\textbf{(a) Iterations}\\
\vspace{0.1cm}
{
\begin{tabular}{c|cccccccc}
\hline
\multicolumn{1}{c|}{Precision} & \multicolumn{8}{c}{Algorithms} \\ \hline
\multicolumn{1}{c|}{$\epsilon$} & \multicolumn{1}{c}{\textcircled{1}} & \multicolumn{1}{c}{\textcircled{2}} & \multicolumn{1}{c}{\textcircled{3}} & \multicolumn{1}{c}{\textcircled{4}} & \multicolumn{1}{c}{\textcircled{5}} & \multicolumn{1}{c}{\textcircled{6}} & \multicolumn{1}{c}{\textcircled{7}} & \multicolumn{1}{c}{\textcircled{8}} \\ \hline
$10^{-6}$  & 168124 & 31374 & 8402   & 3769  & 5480  & \textbf{2094} & 2220 & 8053 \\
$10^{-7}$  & Max    & Max   & 39602  & 7387  & 10621 & 2392          & \textbf{2220} & 23817 \\
$10^{-8}$  & Max    & Max   & 136802 & 11301 & 16371 & 2507          & \textbf{2220} & 260985 \\
$10^{-9}$  & Max    & Max   & 447802 & 13119 & 17642 & \textbf{3209} & 7251 & 357877 \\
$10^{-10}$ & Max    & Max   & Max    & 13528 & 17879 & \textbf{5505} & 7735 & Max \\
\hline
\end{tabular}
}

\vspace{0.4cm} 

\textbf{(b) CPU time (s)}\\
\vspace{0.1cm}
{
\begin{tabular}{c|cccccccc}
\hline
\multicolumn{1}{c|}{Precision} & \multicolumn{8}{c}{Algorithms} \\ \hline
\multicolumn{1}{c|}{$\epsilon$} & \multicolumn{1}{c}{\textcircled{1}} & \multicolumn{1}{c}{\textcircled{2}} & \multicolumn{1}{c}{\textcircled{3}} & \multicolumn{1}{c}{\textcircled{4}} & \multicolumn{1}{c}{\textcircled{5}} & \multicolumn{1}{c}{\textcircled{6}} & \multicolumn{1}{c}{\textcircled{7}} & \multicolumn{1}{c}{\textcircled{8}} \\ \hline
$10^{-6}$  & 994.6 & 850.3 & 51.9   & 66.3  & 28.7  & \textbf{23.1} & 25.2 & 45.3 \\
$10^{-7}$  & --    & --    & 299.4  & 159.6 & 79.8  & 28.4          & \textbf{25.2} & 197.3  \\
$10^{-8}$  & --    & --    & 957.0  & 207.6 & 119.1 & 31.0          & \textbf{25.2} & 1591.9 \\
$10^{-9}$  & --    & --    & 2342.4 & 248.7 & 128.6 & \textbf{37.8} & 93.3 & 2057.4 \\
$10^{-10}$ & --    & --    & --     & 259.4 & 135.2 & \textbf{65.6} & 99.5 & -- \\
\hline
\end{tabular}
}

\end{table}

Figure \ref{fig:GL} illustrates the relative step length decay against both iteration count and CPU time on logarithmic scales. The results show that our proposed algorithms (pBBDCA, pABBDCA$_{\text{er}}$, and pABBDCA$_{\text{se}}$) reach high-precision solutions faster than traditional methods and EPDCA. Besides, a direct comparison with pABBDCA demonstrates that integrating extrapolation not only further accelerates the convergence rate but also effectively dampens the severe numerical oscillations inherent to BB step sizes.

\begin{figure}[htbp!]
    \centering
    \includegraphics[width=1.0\textwidth]{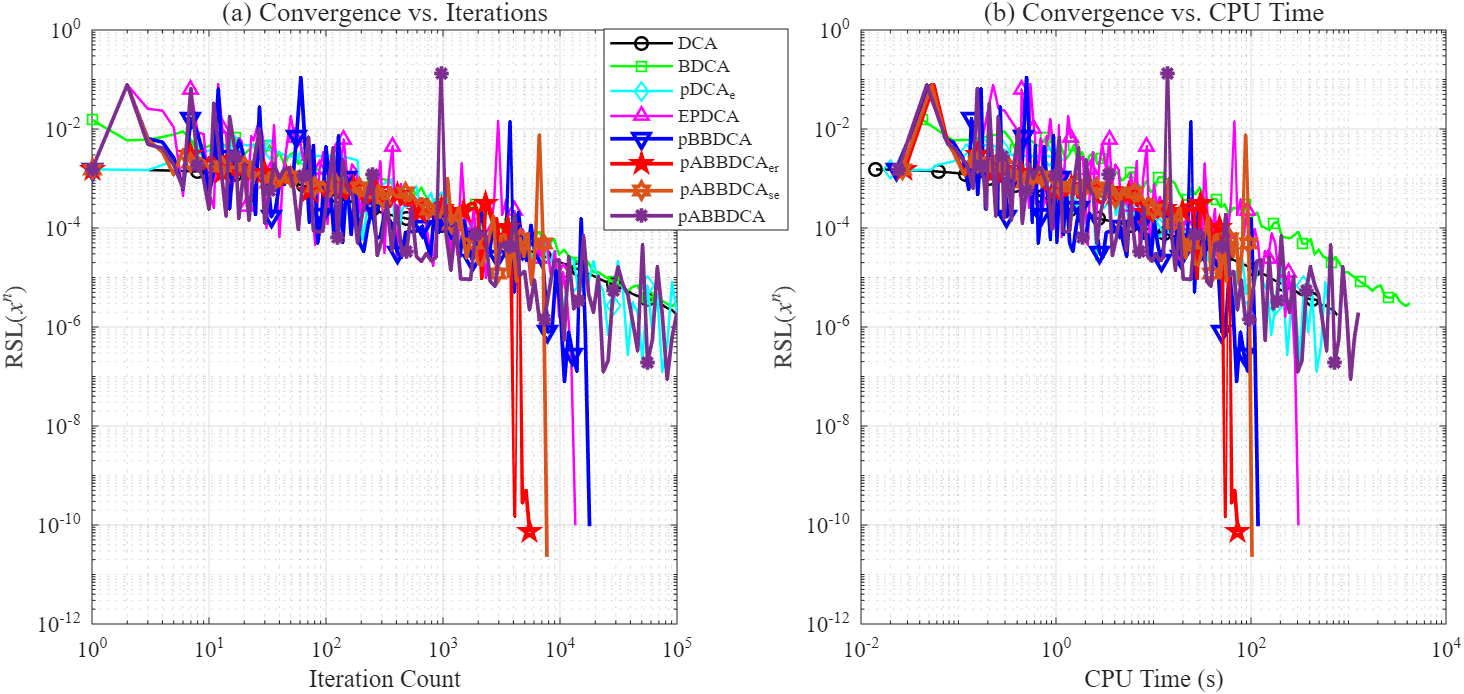}
    \caption{Convergence profiles of the tested algorithms for the LIBSVM dataset, evaluated by relative error versus (a) Iteration Count and (b) CPU Time. Both horizontal and vertical axes are plotted on a logarithmic scale.}
    \label{fig:GL}
\end{figure}

To investigate the underlying geometric mechanism, Figure \ref{fig:extrapolation_effect} presents the angular distribution between the search direction $d^n$ and the negative gradient of the smooth component, $-g^n$. For both un-extrapolated baselines (pBBDCA and pABBDCA), the search directions cluster near orthogonality to $g^n$, which accounts for their numerical oscillations and slow progress in late stages. In contrast, incorporating extrapolation (pABBDCA$_{\text{er}}$) significantly shifts the distribution toward acute angles, maintaining stronger alignment with the smooth steepest descent trajectory. This geometric enhancement explains why extrapolation simultaneously accelerates convergence and restores numerical stability.

\begin{figure}[htbp!]
    \centering
    \includegraphics[width=1.0\textwidth]{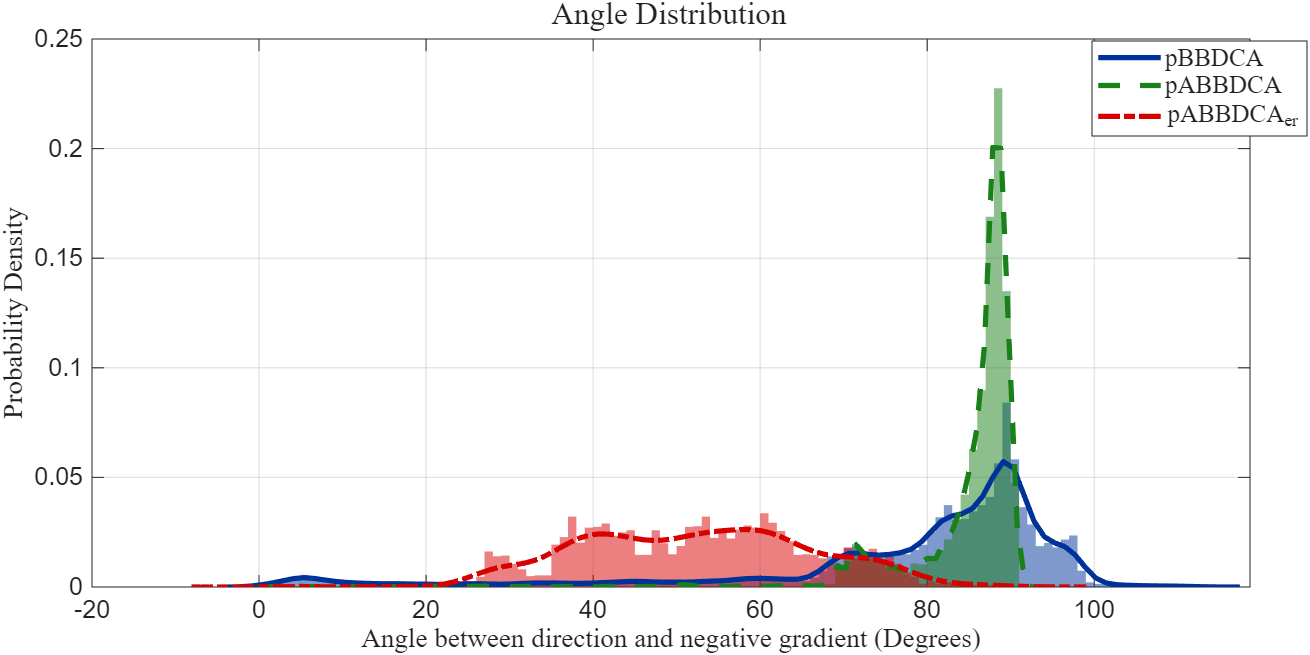}
    \caption{The impact of extrapolation on the geometric alignment of the search direction.}
    \label{fig:extrapolation_effect}
\end{figure}

\subsection{Graphic Ginzburg-Landau model}
We now focus on a segmentation problem with graphic Ginzburg-Landau model, which deviates from the traditional phase-field model by integrating a prior term, thereby facilitating a semi-supervised assignment. The problem is articulated as follows \cite{BF}:
\begin{equation}\label{eq:nonlocalGL}
    \min_{x\in \mathbb{R}^d}E(x) = \sum_{i,j}\frac{\epsilon}{2}w_{ij}(x(i)-x(j))^2 +\frac{1}{\epsilon}\mathbb{W}(x) + \frac{\eta}{2}\sum_i\Lambda(i)(x(i)-y(i))^2
\end{equation}
where $x$ represents the image or data, indexed by \( i \) and \( j \). The energy functional combines a double-well potential \( \mathbb{W}(x) = \frac{1}{4} \sum_{i=1}^d (x(i)^2 - 1)^2 \), a prior term $\Lambda(i)$ weighted by \( \eta \), and a nonlocal interaction term with parameter \( \epsilon \). The weights \( w_{ij} = K(i,j) \cdot N(i,j) \) is determined by feature similarity \( K(i,j) = \exp(-\|P_i - P_j\|_2^2/\sigma^2) \) and proximity \( N(i,j) \), where $P_{i}$ serving as the feature of the data point $i$ and $\sigma^2$ controlling the kernel width. The matrix \( \Lambda \) and vector \( y \) encode prior knowledge. See \cite{Shensun2023} for the graph Laplacian construction. We employ Algorithm \ref{alg:prox_bb_dca} to solve the nonconvex minimization problem with the energy functional
\begin{equation*}
E(x) = f(x) + g_1(x) - g_2(x),
\end{equation*}
with
\[
 \ f(x) = \sum_{i,j}\frac{\epsilon}{2}w_{ij}(x(i)-x(j))^2 + \frac{\eta}{2}\sum_i\Lambda(i)(x(i)-y(i))^2
\]
and 
\[
g_1(x) = \frac{L}{2}\sum_i x(i)^2, g_2(x) = \frac{L}{2}\sum_i x(i)^2-\frac{1}{\epsilon}\mathbb{W}(x).
\]
 Here $L$ is the Lipschitz constant of $\frac{1}{\epsilon}\mathbb{W}(x)$. For image segmentation, we set the model parameters as $\epsilon = \eta = 10$. For our safeguarded extrapolated variant pABBDCA$_{\text{se}}$, the theoretical truncation parameter is set to $c_0 = 0.01$.

For EPDCA and our pBBDCA, the proximal subproblems reduce to diagonal systems and are solved via direct closed-form evaluation. For all other algorithms, the subproblems contain the non-local graph Laplacian matrix and are solved inexactly using the Conjugate Gradient (CG) method with a strict termination tolerance of $10^{-5}$.


Table \ref{tab:nonlocalGLmodel} provides a comprehensive comparison of seven algorithms: the classical DCA \cite{LeThi2018}, BDCA \cite[Section 3]{Artacho2018}, the extrapolated pDCA$_e$\cite{Wen2018}, the nonmonotone EPDCA\cite{luzhaosong}, and our proposed methods, pBBDCA, pABBDCA$_{\text{er}}$ and pABBDCA$_{\text{se}}$. The first criterion evaluated in Table \ref{tab:nonlocalGLmodel}, the DICE Bound, uses the DICE similarity coefficient to assess the quality of the segmentation. It is calculated as
\[
\text{DICE} = {2|X \cap Y|}/(|X| + |Y|),
\]
where $|X|$ and $|Y|$ represent the pixel counts of the segmentation result and the ground truth, respectively. Our DICE Bound here refers to the DICE coefficient reaching a value of 0.98, representing a high agreement between the segmentation and reference data.

\begin{table}[htbp!]
\centering
\caption{Solving \eqref{eq:nonlocalGL} on 11 different termination criteria. (Criteria \uppercase\expandafter{\romannumeral1}: $\|\nabla E(u)\|$, Criteria \uppercase\expandafter{\romannumeral2}: $\|x^n - x^{n-1}\|$)\\ 
\centering
(\textcircled{1}DCA \textcircled{2}BDCA \textcircled{3}pDCA$_e$ \textcircled{4}EPDCA \textcircled{5}pBBDCA \textcircled{6}pABBDCA$_{\text{er}}$
\textcircled{7}pABBDCA$_{\text{se}}$)}\label{tab:nonlocalGLmodel}
{\begin{tabular}{cccrrrrrrr}
\hline
\multicolumn{3}{c}{Criteria} & \textcircled{1} & \textcircled{2} & \textcircled{3} & \textcircled{4} & \textcircled{5} & \textcircled{6} & \textcircled{7} \\ \hline
\multicolumn{2}{c}{\multirow{2}{*}{DICE Bound}} & Iter & 11 & \textbf{8} & 36 & 69 & 116 & 15 & 15 \\ 
& & Time(s) & 15.44 & 12.89 & 5.54 & 2.63 & 2.52 &2.51 & \textbf{2.47} \\ \hline
\multirow{6}{*}{\uppercase\expandafter{\romannumeral1}} & \multirow{2}{*}{$10^{-1}$} & Iter & 35 & \textbf{23} & 105 & 180 & 154 & 35 & 35 \\
& & Time(s) & 44.26 & 32.68 & 10.10 & 7.62 & \textbf{3.35} &6.49 & 6.34 \\ 
& \multirow{2}{*}{$10^{-3}$} & Iter & 80 & 58 & 447 & 419 & 870 & 54 & \textbf{49} \\
& & Time(s) & 98.28 & 79.02 & 26.17 & 16.14 & 19.80 &9.30 & \textbf{8.37} \\ 
& \multirow{2}{*}{$10^{-5}$} & Iter & 119 & 83 & 496 & 782 & 1264 &83 & \textbf{68} \\
& & Time(s) & 145.17 & 115.46 & 27.73 & 32.09 & 28.75 &11.28 & \textbf{9.58} \\ \hline
\multirow{6}{*}{\uppercase\expandafter{\romannumeral2}} & \multirow{2}{*}{$10^{-1}$} & Iter & 37 & \textbf{36} & 109 & 53 & 68 & 41 & 40 \\
& & Time(s) & 46.65 & 50.16 & 10.18 & 2.35 & \textbf{1.47} & 7.50 & 7.21 \\ 
& \multirow{2}{*}{$10^{-3}$} & Iter & 84 & 58 & 201 & 203 & 283 &63 & \textbf{54} \\
& & Time(s) & 103.07 & 79.02 & 14.97 & 8.49 & \textbf{6.33} & 10.14 & 8.83 \\ 
& \multirow{2}{*}{$10^{-5}$} & Iter & 110 & 87 & 496 & 560 & 918 &88 & \textbf{66} \\
& & Time(s) & 134.28 & 123.10 & 27.73 & 21.06 & 20.88 & 18.05 & \textbf{9.49} \\ \hline
\end{tabular}
}
\end{table}

Table \ref{tab:nonlocalGLmodel} highlights the superior computational efficiency of our proposed framework across all precision levels. While BDCA requires fewer iterations in certain low-precision scenarios, its expensive subproblem evaluations lead to significantly slower overall running times compared to our standard pBBDCA, which consistently dominates low-to-moderate precision tasks. Crucially, under high-precision criteria where traditional algorithms suffer from severe stagnation, our extrapolated variants exhibit remarkable robustness. This confirms the synergy of alternating BB step sizes and safeguarded extrapolation when addressing complex, large-scale graphic Ginzburg-Landau models.

\begin{figure}[htbp]   
  \centering            
  \subfloat[Tomato]   
  {\label{fig:tomato}\includegraphics[width=0.18\textwidth]{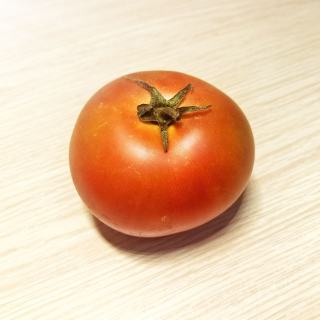}}\quad
  \subfloat[Label]
  {\label{fig:tomatolabel}\includegraphics[width=0.18\textwidth]{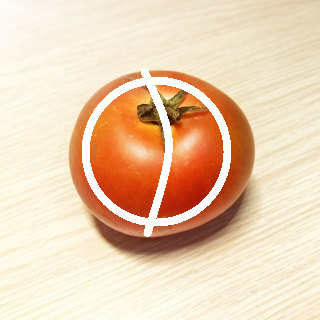}}\quad
  \subfloat[Result 1]
  {\label{fig:result1}\includegraphics[width=0.18\textwidth]{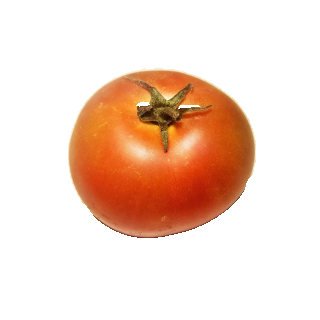}}\quad
  \subfloat[Result 2]
  {\label{fig:result2}\includegraphics[width=0.18\textwidth]{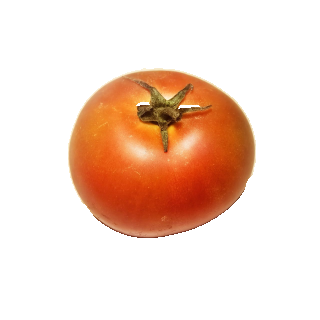}}\quad
  \subfloat[Result 3]
  {\label{fig:result3}\includegraphics[width=0.18\textwidth]{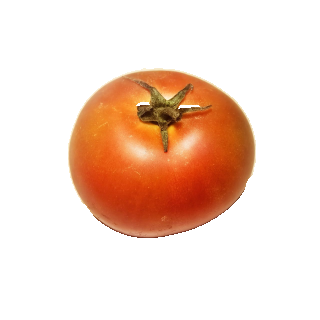}}\quad
  \caption{The performance of segmentation assignment. (The original photograph was taken by the author.)}
  \label{fig:tomato_seg}         
\end{figure}

In Figure \ref{fig:tomato_seg}, we illustrate the segmentation results of a sample image. Figure \ref{fig:tomato} shows the original image, and Figure \ref{fig:tomatolabel} labels the only prior of the tomato to be segmented with white pixels where $\Lambda(i)=1$ (otherwise $\Lambda(i)=0$, and here we do not require the prior of the background). The last three subfigures visually present the segmentation results generated by our three proposed algorithms (pBBDCA, pABBDCA$_{\text{er}}$, and pABBDCA$_{\text{se}}$, respectively) under our ultimate termination criterion.

We finally end this section with a remark on the KL properties of the SCAD regularization~\eqref{scad} and the graphic Ginzburg-Landau functional \eqref{eq:nonlocalGL}.

\begin{remark}\label{KL:two:models}
The discrete graph Ginzburg-Landau functional, being a polynomial in $x$, is semi-algebraic and consequently satisfies the Kurdyka-Łojasiewicz (KL) property \cite[Section 2.2]{ABS}. For SCAD regularization \eqref{scad}, each component $p_{M,i}(x_i)$ constitutes a one-dimensional piecewise quadratic function. Following analogous reasoning to \cite[Section 5.2]{Li2018}, we establish that the energy functional $E(x)$ is a KL function. These properties collectively guarantee the convergence for both models under consideration.
\end{remark}

\section{Conclusion}\label{sec:conclusion}
We proposed the proximal Barzilai-Borwein DC Algorithm (pBBDCA) and its extrapolated variants to efficiently solve nonconvex and nonsmooth DC optimization problems. By decoupling the Barzilai-Borwein step size from the nonmonotone line search and employing safeguarded extrapolation mechanisms, our framework achieves robust acceleration without redundant subproblem evaluations. Global convergence is strictly established under the Kurdyka-\L{}ojasiewicz property. Numerical results confirm the superior computational efficiency of the proposed methods compared to classic DC algorithms.

\noindent
{\small
	\textbf{Acknowledgements}
 Kelin Wu and Hongpeng Sun acknowledge the support of the National Natural Science Foundation of China under grant No. \,12271521, the National Key R\&D Program of China (2022ZD0116800),  and the Beijing Natural Science Foundation No. Z210001. 
}

\bibliographystyle{plain}
\bibliography{dcabb}
\end{document}